\theoremstyle{plain} 
\newtheorem{theorem}{Theorem}[section]
\newtheorem{lemma}[theorem]{Lemma}
\newtheorem{corollary}[theorem]{Corollary}
\theoremstyle{definition}
\newtheorem{definition}[theorem]{Definition}
\theoremstyle{remark}
\newtheorem{remark}[theorem]{Remark}
\newcommand{\primal}{w}
\newcommand{\primalt}{v}
\newcommand{\aux}{\bm{M}}
\newcommand{\auxt}{\bm{N}}
\newcommand{\stiffnessC}{\mathcal{C}}
\newcommand{\vp}{p}
\newcommand{\vpt}{q}
\newcommand{\vphi}{\phi}
\newcommand{\vphit}{\psi}
\newcommand{\vphitt}{\xi}
\newcommand{\vphiRot}{\phi^\perp}
\newcommand{\vphitRot}{\psi^\perp}
\newcommand{\V}{{\bm{V}}}
\newcommand{\Q}{{Q}}
\newcommand{\Vprimal}{W}
\newcommand{\VRegDecomp}{V}
\newcommand{\SpPsi}{\Psi}
\newcommand{\Ltwob}{{\bm{L}^2(\Omega)}}
\newcommand{\LtwobSym}{\bm{L}^2(\Omega)_{\mathrm{sym}}}
\newcommand{\Htwob}[1][]{
\ifthenelse{\isempty{#1}}
{\bm{H}^2(\Omega)}
{\bm{H}^2_0(\Omega)}
}
\newcommand{\HRot}{\bm{H}(\brot, \Omega)}
\newcommand{\Hdiv}{\bm{H}(\Opdiv, \Omega)}
\newcommand{\Opdiv}{\bdiv}
\newcommand{\Hdivdiv}[1]{\bm{H}(\Opdivdiv, \Omega; #1)_{\mathrm{sym}}}
\newcommand{\Opdivdiv}{\div\bdiv}
\newcommand{\normHdivdiv}[2]{\|#1\|_{\Opdivdiv; #2}}
\newcommand{\FE}{\mathcal{S}}
\newcommand{\GammaVertices}[1][]{
\ifthenelse{\isempty{#1}}
{\mathcal{V}_\Gamma}
{\mathcal{V}_{\Gamma,#1}}
}
\newcommand{\GammaEdges}[1][]{
\ifthenelse{\isempty{#1}}
{\mathcal{E}_\Gamma}
{\mathcal{E}_{\Gamma,#1}}
}
\newcommand{\HtwoNV}[1][\Omega]{H^2_{N,\mathcal{V}}(#1)}
\newcommand{\decomposition}{\mathcal{T}_h}
\newcommand{\vertices}[1][]{
\ifthenelse{\isempty{#1}}
{\mathcal{V}_h}
{\mathcal{V}_{h,#1}}
}
\newcommand{\edges}[1][]{
\ifthenelse{\isempty{#1}}
{\mathcal{E}_h}
{\mathcal{E}_{h,#1}}
}
\newcommand{\vertexJump}[1]{\llbracket #1 \rrbracket}
\newcommand{\grad}{\nabla}
\newcommand{\curl}{\operatorname{curl}}
\renewcommand{\div}{\operatorname{div}}
\newcommand{\rot}{\operatorname{rot}}
\newcommand{\bdiv}{\operatorname{Div}}
\newcommand{\bCurl}{\operatorname{Curl}}
\newcommand{\brot}{\operatorname{Rot}}
\newcommand{\symbCurl}{\operatorname{symCurl}}
\newcommand{\symbGrad}{\varepsilon}
\renewcommand{\ker}{\mathrm{Ker}} 
\newcommand{\dom}{D}
\newcommand{\Id}{\bm{I}}
\newcommand{\matrixTrace}{\mathrm{tr}}
\newcommand{\extPsi}[1]{\vphit[#1]}
\newcommand{\extPsiGamma}[1]{\vphit_\Gamma[#1]}
\newcommand{\filter}[1]{\operatorname{P}{#1}}
\DeclareMathOperator{\dotprod}{\cdot}
\newcommand{\pp}[2]{\partial_{#2} #1}
\newcommand{\red}[1]{{\color{black}{#1}}}
\newcommand{\sectref}[1]{Section~\ref{#1}}
\newcommand{\appxref}[1]{Appendix}
\newcommand{\thmref}[1]{Theorem~\ref{#1}}
\newcommand{\lemref}[1]{Lemma~\ref{#1}}
\newcommand{\defref}[1]{Definition~\ref{#1}}
\newcommand{\remref}[1]{Remark~\ref{#1}}
\title{A decomposition result for Kirchhoff plate bending problems and a new discretization approach}
\author{
	Katharina Rafetseder\footnote{Institute of Computational Mathematics, Johannes Kepler University Linz, 4040 Linz, Austria (rafetseder@numa.uni-linz.ac.at, zulehner@numa.uni-linz.ac.at).} \, and
	Walter Zulehner\footnotemark[1]
}
\begin{document}

\maketitle

\begin{abstract}
A new approach is introduced for deriving a mixed variational formulation for Kirchhoff plate bending problems with mixed boundary conditions 
involving clamped, simply supported, and free boundary parts. 
Based on a regular decomposition of an appropriate nonstandard Sobolev space for the bending moments, 
the fourth-order problem can be equivalently written as a system of three (consecutively to solve) second-order problems in standard Sobolev spaces. 
This leads to new discretization methods, which are flexible in the sense, that any existing and well-working discretization method and solution strategy for standard second-order problems can be used as a modular building block of the new method.

Similar results for the first biharmonic problem have been obtained in our previous work [W. Krendl, K. Rafetseder and W. Zulehner, A decomposition result for biharmonic problems and the Hellan-Herrmann-Johnson method, ETNA, 2016]. The extension to more general boundary conditions encounters several difficulties including the construction of an appropriate nonstandard Sobolev space, the verification of Brezzi's conditions, and the adaptation of the regular decomposition.

\smallskip
\noindent \textbf{Key words.} Kirchhoff plates, mixed boundary conditions, free boundary, mixed methods, regular decomposition

\smallskip
\noindent \textbf{AMS subject classifications.} 65N30, 65N22, 74K20
\end{abstract}


\section{Introduction}
\label{sec:introduction}
We consider the Kirchhoff plate bending problem: For a given load $f$, find the deflection $\primal$ such that 
\begin{equation}\label{eq:class_equation}
	\div\bdiv \big(\stiffnessC  \grad^2 \primal\big) =  f \quad \text{in } \Omega
\end{equation}
with appropriate boundary conditions. Here $\Omega$ is a bounded domain in $\mathbb{R}^2$ with a polygonal Lipschitz boundary $\Gamma$, $\div$ denotes the standard divergence of a vector-valued function, $\bdiv$ the row-wise divergence of a matrix-valued function, $\grad^2$ the Hessian and $\stiffnessC$ the material tensor. Note that \eqref{eq:class_equation} reduces to the biharmonic equation, if $\stiffnessC$ is the identity.
In this paper we focus on mixed methods for the original unknown $\primal$ and the bending moments $\aux = -\stiffnessC \grad^2 \primal$ as additional unknowns, which are often quantities of interest on their own.

In our previous work \cite{krendl_rafetseder_zulehner_2016} the first biharmonic boundary value problem is studied, which corresponds to the situation of a purely clamped plate. For this model problem a new mixed variational formulation is derived, which satisfies Brezzi's conditions and is equivalent to the original problem. However, these important properties come at the cost of an appropriate nonstandard Sobolev space $\V$ for $\aux$. Based on a regular decomposition
of $\V$, the fourth-order problem can be rewritten as a sequence of three (consecutively to solve) second-order elliptic problems in standard Sobolev spaces. This leads to a new interpretation of known mixed finite element methods as well as to the construction of new discretization methods, see \cite{krendl_rafetseder_zulehner_2016} for details.
This approach fits into an abstract framework recently presented in \cite{chen_huang_2016} for the decomposition of higher-order problems. However, more general boundary conditions (beyond a purely clamped plate) are not considered there as well. 

The aim of this paper is to extend the ideas of \cite{krendl_rafetseder_zulehner_2016} to the more general situation of a Kirchhoff plate with mixed boundary conditions involving clamped, simply supported, and free boundary parts. This is by far not straight-forward.

The first difficulty arises in the derivation of the new mixed formulation. Contrary to clamped boundary parts, appropriate boundary conditions for $\aux$ have to be incorporated into the definition of the nonstandard Sobolev space $\V$ for simply supported and free boundary parts.  In this paper we do this indirectly using the framework of (possibly unbounded) densely defined operators in Hilbert spaces. 
This approach avoids the direct use of trace operators in nonstandard Sobolev spaces, which would be technically rather involved.
A second difficulty is the verification of Brezzi's conditions. In \cite{krendl_rafetseder_zulehner_2016} the main ingredient for the proof of an inf-sup condition is the property that matrix-valued functions of the form $\primalt \Id$ are contained in $\V$, where $\Id$ denotes the identity matrix and $\primalt$ satisfies homogeneous Dirichlet boundary conditions induced by the boundary conditions for $\primal$. This inclusion is no longer true for problems with free boundary parts. So, a new technique is required for proving the inf-sup condition.
A third difficulty arises in the regular decomposition for a similar reason. The first component of the decomposition in \cite{krendl_rafetseder_zulehner_2016} is of the form $\vp \Id$ and is not contained in $\V$ for problems with free boundary parts. So, a new approach is required to pursue the decomposition for problems with free boundary parts. 
It is shown in this paper how to overcome all these difficulties and how to achieve again a decomposition of the fourth-order problem into three (consecutively to solve) second-order elliptic problems in standard Sobolev spaces.

So far in literature, mixed methods for \eqref{eq:class_equation} in $\primal$ and $\aux$ have been formulated as linear operator equations in function spaces for which either the associated linear operator is not an isomorphism or the involved norms contain a mesh-dependent variant of the $H^2$-norm for $\primal$, see, e.g., \cite{brezzi_raviart_1977,falk_osborn_1980,babuska_osborn_pitkaranta_1980,blum_rannacher_1990,boffi_brezzi_fortin_2013}.
This lack of easy-to-access knowledge on the mapping properties of the involved operators on the continuous level makes it hard to design efficient preconditioners on the discrete level.
Our new mixed formulation satisfies Brezzi's conditions. Therefore, the associated linear operator is an isomorphism. Additionally, the operator is of triangular structure. This naturally leads to the construction of efficient solvers. Moreover, the new mixed formulation is equivalent to the original problem without additional convexity assumptions on $ \Omega$, while most of the papers in literature (except for \cite{blum_rannacher_1990}) require $\Omega$ to be convex.  

For alternative discretization methods such as conforming, non-conforming, and discontinuous Galerkin methods for the primal formulation as well as alternative mixed methods we refer to the short discussion in \cite{krendl_rafetseder_zulehner_2016} and the references cited there.
Our approach leads to new discretization methods, which are flexible in the sense, that any existing and well-working discretization method and solution strategy for second-order problems can be used as a modular building block of the new method.
\red{One option would be to choose standard $C^0$ finite elements for each of the three second-order elliptic problems (for two scalar fields and one vector field) resulting in approximate solutions to $\primal$ and $\aux$. 
In \cite{beirao_niiranen_stenberg_2007,beirao_niiranen_stenberg_2008} a different method was proposed that also uses only standard $C^0$ finite element spaces for second-order problems for a formulation in the kinematic variables $\primal$ and $\grad \primal$. For reaching approximate solutions of comparable accuracy the method in \cite{beirao_niiranen_stenberg_2007,beirao_niiranen_stenberg_2008} requires the approximation of one scalar field less than the approach presented here. 
However, the linear system resulting from the method in \cite{beirao_niiranen_stenberg_2007,beirao_niiranen_stenberg_2008} is a coupled system of all degrees of freedoms of one scalar and one vector field, while  the method presented here requires to solve linear systems for the degrees of freedom separately for each of the two scalar and the vector field.
This reduces the computational costs for direct solvers. For the use of iterative solvers efficent methods for standard second-order problems like multigrid methods can be directly used for each of the three linear systems. 
Preconditioning is not addressed in \cite{beirao_niiranen_stenberg_2007,beirao_niiranen_stenberg_2008}. 
So we feel that our method is competitive with respect to the overall computational efficiency.
}

The paper is organized as follows. In \sectref{sec:kirchhoff_love_plate} the Kirchhoff plate bending problem is introduced. \sectref{sec:new_mixed_formulation} contains a new mixed formulation, for which well-posedness and equivalence to the original problem is shown. 
A regular decomposition of the nonstandard Sobolev space for $\aux$ is derived in \sectref{sec:regular_decomposition} and the resulting decoupled formulation is presented. The decoupled formulation leads in a natural way to the construction of a new discretization method, which is introduced in \sectref{sec:conforming_method}, and for which a priori error estimates are derived. The paper closes with numerical experiments in \sectref{sec:numerical_experiments}. 


\section{The Kirchhoff plate bending problem}
\label{sec:kirchhoff_love_plate}
We consider the Kirchhoff plate bending problem of a linearly elastic plate where the undeformed mid-surface is described by a domain $\Omega \subset \mathbb{R}^2$ with a polygonal Lipschitz boundary $\Gamma$. 
In what follows, let the boundary $\Gamma$ be written in the form
\begin{equation*}
	\Gamma = \GammaVertices \cup \GammaEdges \quad \text{with} \ \GammaEdges = \bigcup_{k=1}^K E_k,
\end{equation*}
where $E_k$, $k = 1,2,\ldots,K$, are the edges of $\Gamma$, considered as open line segments and $\GammaVertices$ denotes the set of corner points in $\Gamma$. Furthermore, $n = (n_1,n_2)^T$ and $t= (-n_2,n_1)^T$ represent the unit outer normal vector and the unit counterclockwise tangent vector to $\Gamma$, respectively.

The plate is considered to be clamped on a part $\Gamma_c \subset \Gamma$, simply supported on $\Gamma_s \subset \Gamma$, free on $\Gamma_f \subset \Gamma$  with $\Gamma = \Gamma_c \cup \Gamma_s \cup \Gamma_f$.  We assume that each edge $E \in \GammaEdges$ is contained in exactly one of the sets $\Gamma_c$, $\Gamma_s$, $\Gamma_f$, and the edges are maximal in the sense that two edges with the same boundary condition do not meet at an angle of $\pi$. 
Recall the definition of the bending moments $\aux$ by the Hessian of the deflection $\primal$
\begin{equation} \label{eq:aux}
	\aux = -\stiffnessC \grad^2 \primal,
\end{equation}
where $\stiffnessC$ is the fourth-order material tensor. 
The tensor $\stiffnessC$ is assumed to be symmetric and positive definite on symmetric matrices, $\lambda_{min}(\stiffnessC)$ and \red{$\lambda_{max}(\stiffnessC)$} denote 
the minimal and maximal eigenvalue of $\stiffnessC$, respectively.
For example, for isotropic materials with Poisson ratio $\nu$, the material tensor $\stiffnessC$ is given by $\stiffnessC \auxt = D \bigl( (1-\nu)\auxt + \nu \ \matrixTrace(\auxt) \Id \bigr)$
for matrices $\auxt$, where $D>0$ depends on material constants, $\Id$ is the identity matrix and $\matrixTrace$ is the trace operator for matrices (cf. \cite{reddy_2007}).
We introduce the following notations
\begin{equation*}
	\aux_{nn} = \aux n \dotprod n, \qquad \aux_{nt} = \aux n \dotprod t
\end{equation*}
for the normal-normal component and the normal-tangential component of $\aux$, where the symbol $\dotprod$ denotes the Euclidean inner product.
The classical Kirchhoff plate bending problem reads as follows (cf. \cite{reddy_2007}): For given load $f$, find a deflection $\primal$ such that
\red{
\begin{equation}\label{eq:class_formulation}
	-\div\bdiv \aux =  f \quad  \text{in } \Omega, \quad \text{with} \ \aux = -\stiffnessC \grad^2 \primal
\end{equation}
}
and the boundary conditions
\begin{equation*} 
\begin{alignedat}{4} 
	&\primal = 0, \quad &&\pp{\primal}{n}=0 & \quad \text{on} \ \Gamma_c, \\
	&\primal = 0, \quad &&\aux_{nn} = 0 & \quad \text{on} \ \Gamma_s, \\
	&\aux_{nn} = 0, \quad &&\pp{\aux_{nt}}{t} + \bdiv \aux \dotprod n = 0 & \quad \text{on} \ \Gamma_f,
\end{alignedat}
\end{equation*}
where $\pp{}{t}$ denotes the tangential derivative, and the corner conditions
\begin{equation}\label{eq:corner_conditions}
	\vertexJump{\aux_{nt}}_x = \aux_{n_1 t_1}(x) - \aux_{n_2 t_2}(x) = 0  \quad \text{for all} \ x \in \GammaVertices[f],
\end{equation}
where $\GammaVertices[f]$ denotes the set of corner points whose two adjacent edges (with corresponding normal and tangent vectors $n_1$, $t_1$ and $n_2$, $t_2$)  belong to $\Gamma_f$.
\begin{remark}
	There is a fourth type of boundary condition given by
	\begin{equation*}
		\pp{\primal}{n}=0, \quad \pp{\aux_{nt}}{t} + \bdiv \aux \dotprod n = 0
	\end{equation*}
	with corner conditions of the form \eqref{eq:corner_conditions},
	which appears, e.g., in boundary value problems of the Cahn-Hilliard equation. The theory presented in the following can easily be extended to mixed boundary conditions including also this fourth type.
\end{remark}

A standard (primal) variational formulation of \eqref{eq:class_formulation} is given as follows: find $\primal \in \Vprimal$ such that
\begin{equation}\label{eq:primal_formulation}
	\int_\Omega \stiffnessC \grad^2 \primal : \grad^2 \primalt \ dx = \langle F,\primalt \rangle \quad \text{for all} \ \primalt \in \Vprimal,
\end{equation}
with the Frobenius inner product $\bm{A} : \bm{B} = \sum_{i,j} \bm{A}_{ij} \, \bm{B}_{ij}$ for matrices $\bm{A}, \bm{B}$, the right-hand side $\langle F,\primalt \rangle = \int_\Omega f \, v \ dx$, and the function space
\begin{equation}\label{eq:primal_formulation_space}
	\Vprimal = \{\primalt \in H^2(\Omega) : \ \primalt = 0, \ \pp{\primalt}{n} = 0 \ \text{on} \ \Gamma_c, \quad \primalt = 0 \ \text{on} \ \Gamma_s \}
\end{equation}
with associated norm $\|\primalt\|_{\Vprimal} = \|\primalt\|_2$. Following, e.g., \cite{adams_fournier_2003,mcLean_2000}, here and throughout the paper  $L^2(\Omega)$ and $H^m(\Omega)$ denote the standard Lebesgue and Sobolev spaces of functions on $\Omega$ with corresponding norms $\|.\|_{0}$ and $\|.\|_{m}$ for positive integers $m$. For functions on $\Gamma$ we use $L^2(\Gamma)$ and $H^\frac{1}{2}(\Gamma)$ to denote the Lebesgue space and the trace space of $H^1(\Omega)$  with corresponding norms $\|.\|_{0,\Gamma}$ and $\|.\|_{\frac{1}{2},\Gamma}$. Moreover, $H^1_{0,\Gamma'}(\Omega)$ denotes the set of functions in $H^1(\Omega)$ which vanish on a part $\Gamma'$ of $\Gamma$.
The $L^2$-inner product on $\Omega$ and $\Gamma'$ are always denoted by $(.,.)$ and $(.,.)_{\Gamma'}$, respectively, no matter whether it is used for scalar, vector-valued, or matrix-valued functions.

In order to avoid technicalities related to rigid body motions, we assume throughout the paper that $\Gamma_c$ contains at least one non-trivial edge $E \in \GammaEdges$.
Then existence and uniqueness of a solution $\primal$ to \eqref{eq:primal_formulation} are guaranteed by the theorem of Lax-Milgram  (see, e.g., \cite{lion_1972, necas_2012}) for even more general right-hand sides $\langle F,\primalt\rangle$, where $F \in W^*$. Here we use $H^*$ to denote the dual of a Hilbert space $H$ and $\langle .,. \rangle$ for the duality product on $H^*\times H$. Moreover, the solution $\primal$ depends continuously on $F$
\begin{equation}\label{eq:primal_formulation_stability}
	\|\primal\|_W \leq c \, \|F\|_{W^*},
\end{equation}
with $c= c' / \lambda_{min}(\stiffnessC)$, where $c'$ depends only on the constant $c_F$ of Friedrichs' inequality. All results of this paper can easily be extended to the case $\Gamma_c = \emptyset$ under appropriate compatibility conditions for the right-hand side $F$.

\red{
For scalar functions  $\primalt$, vector-valued functions $\vphit$, and matrix-valued functions $\auxt$ the first order differential expressions 
\begin{equation*}
	\grad \primalt, \grad \vphit, \curl \primalt, \bCurl \vphit, \div \vphit, \bdiv \auxt, \rot \vphit, \brot \auxt
\end{equation*}
are defined in the weak sense on the corresponding domains of definition 
\begin{equation*}
	H^1(\Omega), (H^1(\Omega))^2, H(\curl, \Omega), H(\bCurl, \Omega), \dots .
\end{equation*}
In case that all components are in $H^1(\Omega)$ they take on their classical form given as follows:
}
\begin{align*}
	\grad \primalt &= 
	\begin{pmatrix}
	\pp{\primalt}{1} \\
	\pp{\primalt}{2}
	\end{pmatrix},\quad
	&\curl \primalt &=
	\begin{pmatrix}
	\pp{\primalt}{2} \\
	-\pp{\primalt}{1}
	\end{pmatrix},\\
	\grad \vphit &=
	\begin{pmatrix}
	\pp{\vphit_{1}}{1}  & \pp{\vphit_{1}}{2}  \\
	\pp{\vphit_{2}}{1} & \pp{\vphit_{2}}{2}  \\
	\end{pmatrix},\quad
	&\bCurl \vphit&=
	\begin{pmatrix}
	\pp{\vphit_{1}}{2} & -\pp{\vphit_{1}}{1}  \\
	\pp{\vphit_{2}}{2} & -\pp{\vphit_{2}}{1}  \\
	\end{pmatrix},\\
	\div \vphit &= \pp{\vphit_1}{1} + \pp{\vphit_2}{2}, \quad &\rot \vphit &= \pp{\vphit_2}{1} - \pp{\vphit_1}{2},\\
	\bdiv \auxt &= 
	\begin{pmatrix}
	\pp{\auxt_{11}}{1} + \pp{\auxt_{12}}{2}  \\
	\pp{\auxt_{21}}{1} + \pp{\auxt_{22}}{2}  \\
	\end{pmatrix},\quad
	&\brot \auxt &= 
	\begin{pmatrix}
	\pp{\auxt_{12}}{1} - \pp{\auxt_{11}}{2}  \\
	\pp{\auxt_{22}}{1} - \pp{\auxt_{21}}{2}  \\
	\end{pmatrix}.
\end{align*}
Moreover, the symmetric gradient and the symmetric $\bCurl$ are introduced by
\begin{equation*}
	\varepsilon (\vphit) = \frac{1}{2}( \grad \vphit + (\grad \vphit)^T), \quad \symbCurl \vphit = \frac{1}{2}( \bCurl \vphit + (\bCurl \vphit)^T).
\end{equation*}


\section{A new mixed variational formulation}
\label{sec:new_mixed_formulation}
\red{
For the new mixed variational formulation we introduce the bending moments $\aux$, as defined in \eqref{eq:aux}, as auxiliary variable. 
Then the Kirchhoff plate bending problem reads in terms of $\aux$ as stated in \eqref{eq:class_formulation}. Note, the involved operators are the second order differential operators $\grad^2$ and $\Opdivdiv$. In the following we give a formally precise definition of them.

Throughout the paper, the differential expression $\grad^2 \primalt$ is only considered for functions $\primalt \in \Vprimal \subset H^2(\Omega)$. Therefore, we define $\grad^2 \primalt$ in the standard way as the matrix consisting of all second order partial derivatives. In order to introduce the operator $\Opdivdiv$ we use the classical concept of (possibly unbounded) densely defined linear operators $B$.
Later on we consider instead of a general operator $B$ the Hessian $\grad^2$ and define $\Opdivdiv$ as its adjoint.
}

We consider an operator $B \colon D(B) \subset X \rightarrow Y^*$, where $X$ and $Y$ are Hilbert spaces and $D(B)$, the domain of definition of $B$, is dense in $X$.
The adjoint $B^* \colon \dom(B^*) \subset Y \rightarrow X^*$ is then defined as follows:
$y \in D(B^*)$ if and only if $y \in Y$ and there is a linear functional $G \in X^*$ such that 
\begin{equation}\label{eq:domainAdjoint_conditionG}
  \langle B x,y\rangle = \langle G,x\rangle \quad \text{for all} \ x \in D(B).
\end{equation}
In this case we define $B^*y = G$. Note that $\langle B^* y, x \rangle$ is well-defined for $x \in X$ and $y \in D(B^*)$ and we have in particular
\begin{equation}\label{eq:adjoint_property}
	\langle B^*y, x \rangle  = \langle B x, y \rangle \quad \text{for all} \ x \in \dom(B),\ y \in \dom(B^*).
\end{equation}
The domain $D(B^*)$ is a Hilbert space w.r.t.~the graph norm \red{$\|y\|_{D(B^*)} = (\|y\|_Y^2 + \|B^*y\|_{X^*}^2)^\frac{1}{2}$.}



\red{As already indicated above}, it is quite natural to choose $D(B) = W$ and to define $B = \nabla^2$ as an operator mapping to $Y = \LtwobSym$, (or, more precisely, to the dual of $Y$,) given by
\begin{equation*} 
  \langle \grad^2 \primalt, \auxt \rangle = \int_\Omega \grad^2 \primalt : \auxt \ dx
  \quad \text{for} \ \primalt \in D(B) = W,\ \auxt \in Y = \LtwobSym,
\end{equation*}
where $\LtwobSym$ denotes the space of symmetric matrix-valued functions given by
\begin{equation*}
	\LtwobSym = \{ \auxt : \auxt_{ij} = \auxt_{ji} \in L^2(\Omega), \ i,j = 1,2 \}
\end{equation*}
and equipped with the standard $L^2$-norm $\|\auxt\|_0$ for \red{a} matrix-valued \red{function} $\auxt$. 

\red{Keep in mind, in the following we always fix $D(B) = \Vprimal$ and obtain for the adjoint $B^*$ different domains of definition $D(B^*)$, which strongly depend on the choice of $X$.
There are several options how to choose $X$. However, according to the discussion from above, there is a restriction to meet:
$D(B)$ is a dense subset of $X$.} Now we discuss three possible choices for $X$. A first and trivial option would be $X = W$. Then it is easy to see that $D(B^*) = \LtwobSym$. Note that for this choice we have $X \subset H^2(\Omega)$, so the disadvantage for the mixed method is to work with a second-order Sobolev space for $\primal$. 
A second option would be $X = L^2(\Omega)$. Then it turns out that $D(B^*) \subset \bm{H}(\Opdivdiv, \Omega)_\text{sym} = \{ \auxt \in \LtwobSym : \Opdivdiv \auxt \in L^2(\Omega) \}$, where here $\Opdivdiv$ is defined in the distributional sense. This time the disadvantage for the mixed method is to work with a second-order Sobolev space for $\aux$.

The idea for the new mixed formulation is to distribute the smoothness requirements evenly among $\primal$ and $\aux$ by choosing the space $X$ as an intermediate space
between $\Vprimal$ and $L^2(\Omega)$. In particular, we propose to set $X$ equal to $Q$, given  by
\begin{equation*}
	\Q = H^1_{0,\Gamma_c \cup \Gamma_s}(\Omega) = \{\primalt \in H^1(\Omega) : \primalt = 0 \ \text{on} \ \Gamma_c \cup \Gamma_s \},
\end{equation*}
equipped with the norm $\|\primalt\|_\Q = \|\primalt\|_1$. 
\begin{remark}
	The space $\Q$ is the interpolation space between $\Vprimal$ and $L^2(\Omega)$. Note that we only use interpolation as motivation, but do not rely in the following on results from interpolation theory.
\end{remark}

This choice for $X$ meets the required condition:
\begin{lemma} \label{lem:VdenseQ}
	The subspace $\Vprimal$ is dense in $\Q$.
\end{lemma}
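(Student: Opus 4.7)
The plan is to exhibit a common dense subspace of $\Vprimal$ and $\Q$. Concretely, I would introduce
$$D = \{\vphit \in C^\infty(\bar{\Omega}) : \vphit \equiv 0 \text{ in a relative neighborhood of } \Gamma_c \cup \Gamma_s \text{ in } \bar{\Omega}\}$$
and verify (a) $D \subset \Vprimal$ and (b) $D$ is dense in $\Q$ with respect to $\|\cdot\|_1$. Density of $\Vprimal$ in $\Q$ then follows by transitivity.

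Step (a) is essentially by inspection. Any $\vphit \in D$ is smooth on $\bar{\Omega}$ and in particular belongs to $H^2(\Omega)$. Since $\vphit$ vanishes on an open neighborhood of $\Gamma_c$ in $\bar{\Omega}$, both $\vphit$ and $\pp{\vphit}{n}$ vanish on $\Gamma_c$, and analogously $\vphit$ vanishes on $\Gamma_s$. Hence $\vphit \in \Vprimal$.

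Step (b) is the substantive step: it is the classical density of smooth functions vanishing near a closed boundary piece in the mixed-Dirichlet space $H^1_{0,\Gamma_c \cup \Gamma_s}(\Omega)$ on a Lipschitz domain. I would prove it by a standard translation-and-mollification argument. Given $\primalt \in \Q$, the vanishing trace on $\Gamma_c \cup \Gamma_s$ allows one to extend $\primalt$ by zero across $\Gamma_c \cup \Gamma_s$ into a slightly larger domain $\tilde{\Omega} \supset \Omega$, with the extension still in $H^1(\tilde{\Omega})$. Using a finite local partition of unity subordinate to a covering of $\bar{\Omega}$ that separates patches near $\Gamma_c \cup \Gamma_s$ from patches near $\Gamma_f$ and interior patches, one translates the restriction of the extension on each boundary patch a small distance inward (i.e.\ away from $\Gamma_c \cup \Gamma_s$) and then convolves with a mollifier on a scale smaller than the translation. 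Continuity of translation in $H^1$ and standard mollifier estimates give $H^1(\Omega)$-convergence back to $\primalt$, while the smoothed functions are, by construction, supported away from $\Gamma_c \cup \Gamma_s$ in $\bar{\Omega}$, so they lie in $D$.

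The main obstacle is organising the translation directions consistently at the corner points $x \in \GammaVertices$ where an edge of $\Gamma_c \cup \Gamma_s$ meets an edge of $\Gamma_f$. One needs a local direction that simultaneously moves strictly away from $\Gamma_c \cup \Gamma_s$ and keeps the translated support inside $\tilde{\Omega}$. Because $\Omega$ is polygonal with a Lipschitz boundary and the edges are finite in number, such a direction exists in a small wedge around each transition vertex, and gluing via the partition of unity completes the argument. In an alternative approach one can simply cite the classical result of this type for Lipschitz domains and specialise it to the polygonal setting.
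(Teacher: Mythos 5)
Your argument is correct but follows a genuinely different route than the paper's. The paper reduces the density claim to a density of \emph{trace spaces}: since $C_0^\infty(\Omega)\subset\Vprimal$, the $\Q$-closure of $\Vprimal$ already contains $H^1_0(\Omega)$, so density of $\Vprimal$ in $\Q$ is equivalent to density of $\gamma(\Vprimal)$ in $\gamma(\Q)$; localizing at the corners, this is then read off from the density of $C_0^\infty(\mathbb{R}^+)$ in $H^{1/2}(\mathbb{R}^+)$ and $\tilde H^{1/2}(\mathbb{R}^+)$, cited from Grisvard. You instead exhibit a common dense subspace $D$ of $C^\infty(\overline\Omega)$-functions vanishing near $\Gamma_c\cup\Gamma_s$ and prove its density in $\Q$ directly by zero-extension across $\Gamma_c\cup\Gamma_s$, a global Sobolev extension, translation, and mollification. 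The trade-off is clear: the paper's reduction is short and outsources the corner analysis to a standard half-line result, whereas your argument is elementary and self-contained but must supply the corner geometry by hand. Both proofs locate the real difficulty at the transition vertices. Your claim that a suitable translation direction exists near each such vertex is correct precisely because a polygonal Lipschitz boundary has interior angle $\theta<2\pi$ at every vertex, so the exterior wedge has positive opening and there is a fixed direction $e=e(\theta)$ such that the translate by $\delta e$ pushes a whole neighborhood of the adjacent $\Gamma_c\cup\Gamma_s$-edge into the region where the zero-extension vanishes, while the translate stays well defined thanks to the global extension; to make the argument fully rigorous you would write $e(\theta)$ down, check the estimate uniformly on each boundary patch (including reentrant corners, where the wedge is thin and $\delta$ must be small), and also treat the vertices where two edges of $\Gamma_c\cup\Gamma_s$ meet. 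There is no conceptual obstruction, so this is a sound, more constructive alternative to the paper's trace-space argument.
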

\begin{proof}
	We follow the lines of the proof of \cite[Theorem 1.6.1]{grisvard_1992}. In order to verify the density, we have to check that the trace space $\gamma(\Vprimal)$ is dense in the trace space $\gamma(\Q)$, where $\gamma$ is the standard $H^1$-trace representing the value on the boundary. By considering the situation locally near each corner in $\GammaVertices$, the required density follows from the density of $C^\infty_0(\mathbb{R}^+)$ in $H^{\frac{1}{2}}(\mathbb{R}^+)$ and $\tilde H^{\frac{1}{2}}(\mathbb{R}^+)$; see \cite{grisvard_1985}.
\end{proof}

Now we leave the abstract framework and use, from now on, the notations
\begin{equation}\label{eq:operator_Divdiv}
  \div\bdiv \quad \text{and} \quad \Hdivdiv{\Q^*}
\end{equation}
instead of $B^*$  and $D(B^*)$, respectively, \red{with $X = \Q$ and unchanged $D(B) = \Vprimal$ and $Y = \LtwobSym$}. In consistence with the abstract framework, the Hilbert space $\Hdivdiv{\Q^*}$ is explicitly given by
\begin{equation}\label{eq:domain_Divdiv}
\begin{alignedat}{1}
	&\Hdivdiv{\Q^*} 
	= \{ \auxt \in \LtwobSym \colon \ \text{the functional} \\
	&\qquad G \colon \primalt \mapsto \int_\Omega \nabla^2 \primalt : \auxt \  d x, \ \primalt \in \Vprimal, 
	\ \text{is bounded w.r.t.~the } \Q\text{-norm} \},
\end{alignedat}
\end{equation}
equipped with the norm $\normHdivdiv{\auxt}{\Q^*} = (\|\auxt\|^2_0 + \|\Opdivdiv \auxt\|^2_{\Q^*})^{1/2}$.

This motivates the new mixed formulation as follows: For $F\in \Q^*$, find $\aux \in \V$ and $\primal \in \Q$ such that
\begin{equation}\label{eq:newMixed_formulation}
\begin{alignedat}{4} 
	 & (\aux, \auxt)_{\stiffnessC^{-1}} & & + \langle\Opdivdiv \auxt, \primal \rangle & & = 0 & \quad & \text{for all} \ \auxt \in \V, \\
	 & \langle\Opdivdiv \aux, \primalt \rangle & & & & = -\langle F, \primalt \rangle & \quad & \text{for all} \ \primalt \in \Q,
\end{alignedat}
\end{equation}
with the function spaces
\begin{equation}\label{eq:newMixed_formulation_spaces}
	\V = \Hdivdiv{\Q^*}, \quad
	\Q = H^1_{0,\Gamma_c \cup \Gamma_s}(\Omega),
\end{equation}
equipped with the norms $\|\auxt\|_\V = \normHdivdiv{\auxt}{\Q^*}$ and $\|\primalt\|_\Q = \|\primalt\|_1$.
Here, we use the notation $(\aux, \auxt)_{\stiffnessC^{-1}} = (\stiffnessC^{-1} \aux, \auxt)$.

The first line in \eqref{eq:newMixed_formulation} comes from the relation $\stiffnessC^{-1} \aux + \nabla^2 \primal = 0$ between bending moment $\aux$ and deflection $\primal$, the second line originates from \eqref{eq:class_formulation}. Note that we require additional regularity of $F$, namely $F\in \Q^*$. In contrast, for the primal problem \eqref{eq:primal_formulation} we need only $F\in \Vprimal^*$.

\begin{remark}
\red{The operator $\Opdivdiv$ as defined in \eqref{eq:operator_Divdiv} and the composition of the first order operators $\div$ and $\bdiv$ introduced at the end of \sectref{sec:kirchhoff_love_plate} differ in two ways. First of all, their domains of definition are different. While $\div(\bdiv \auxt)$ is only well-defined for functions $\auxt \in \Ltwob$, where $\div \auxt$ and $\div\bdiv \auxt$ are $L^2$-functions as well, the domain of definition of $\Opdivdiv$ in \eqref{eq:domain_Divdiv} contains functions $\auxt\in\LtwobSym$ with the less restrictive requirement $\Opdivdiv\auxt \in \Q^*$. The domain of definition of $\Opdivdiv$ in \eqref{eq:domain_Divdiv} includes the boundary conditions $\auxt_{nn} = 0$ on $\Gamma_s \cup \Gamma_f$ and $\vertexJump{\auxt_{nt}}_{x} = 0$ for all $x \in \GammaVertices[f]$ in a weak sense, as we show later in \thmref{thm:spaceChar_Hdivdiv1_smooth}.

But even on the intersection of the domains of definition $\div(\bdiv \auxt)$ coincides with $\Opdivdiv \auxt$, only if $\auxt$ satisfies the boundary condition $\pp{\auxt_{nt}}{t} + \bdiv \auxt \dotprod n = 0$ on $\Gamma_f$.}
\end{remark}

\begin{remark}
	In \cite{sinwel_2009,pechstein_schoeberl_2011,pechstein_schoeberl_2016} a similar nonstandard Sobolev space is introduced. Note, our way of definition is different and well-suited for the further considerations.
\end{remark}

Problem \eqref{eq:newMixed_formulation} has the typical structure of a saddle point problem
\begin{equation*} 
\begin{alignedat}{4}
  & a(\aux,\auxt) & & + b(\auxt,\primal) & & = 0  & \quad & \text{for all} \ \auxt \in \V , \\
  & b(\aux,\primalt) & & & & = - \langle F , \primalt \rangle & \quad & \text{for all} \ \primalt \in \Q,
\end{alignedat}
\end{equation*}
whose associated linear operator $\mathcal{A} \colon \V \times \Q \longrightarrow (\V \times \Q)^*$ is given by
\[
  \left\langle 
     \mathcal{A} 
     (\aux, \primal),
     (\auxt, \primalt) 
  \right\rangle
    = a(\aux,\auxt) + b(\auxt,\primal) + b(\aux,\primalt).
\]
If the bilinear form $a$ is symmetric, i.e., $a(\aux,\auxt) = a(\auxt,\aux)$, and non-negative, i.e., $a(\auxt,\auxt) \ge 0$, which is fulfilled for \eqref{eq:newMixed_formulation}, it is well-known that $\mathcal{A}$ is an isomorphism from $\V \times \Q$ onto $(\V \times \Q)^*$, if and only if the following conditions are satisfied; see, e.g., \cite{boffi_brezzi_fortin_2013}:
\begin{enumerate}
\item 
$a$ is bounded: There is a constant $\|a\| > 0$ such that
\[
  |a(\aux,\auxt)| \le \|a\| \, \|\aux\|_{\V} \, \|\auxt\|_{\V} \quad \text{for all} \ \aux, \ \auxt \in \V.
\]
\item 
$b$ is bounded: There is a constant $\|b\| > 0$ such that
\[
  |b(\auxt,\primalt)| \le \|b\| \, \|\auxt\|_{\V} \|\primalt\|_\Q \quad \text{for all} \ \auxt \in \V, \ \primalt \in \Q.
\]
\item 
$a$ is coercive on the kernel of $b$: There is a constant $\alpha > 0$ such that
\[
  a(\auxt,\auxt) \ge \alpha \, \|\auxt\|_{\V}^2 \quad \text{for all} \ \auxt \in \ker B
\]
with $\ker B = \{ \auxt \in \V \colon b(\auxt,\primalt) = 0 \quad \text{for all} \ \primalt \in \Q \}$.
\item 
$b$ satisfies the inf-sup condition: There is a constant $\beta > 0$ such that
\[
  \inf_{\rule[0.6ex]{0ex}{1ex} 0 \neq \primalt \in \Q} \sup_{0 \neq \auxt \in \V} 
  \frac{b(\auxt,\primalt)}{\|\auxt\|_{\V} \, \|\primalt\|_\Q} \ge \beta.
\]
\end{enumerate}
We will refer to these conditions as Brezzi's conditions with constants $\|a\|$, $\|b\|$, $\alpha$, and $\beta$.

In order to verify Brezzi's conditions for \eqref{eq:newMixed_formulation}, we need the following result on the relation between the primal problem \eqref{eq:primal_formulation} and the new mixed problem \eqref{eq:newMixed_formulation}.
\begin{theorem}
\label{thm:relation_primal_mixed}
	Let $\primal$ be the solution of the primal problem \eqref{eq:primal_formulation} for $F\in \Q^*$. Then we have $\aux = -\stiffnessC \grad^2 \primal \ \in \V$ and $(\aux, \primal)$ solves the mixed problem \eqref{eq:newMixed_formulation}.
\end{theorem}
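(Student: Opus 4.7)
The plan is to proceed in two stages, first checking membership $\aux \in \V$, then verifying the two mixed equations.

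For membership, I would go directly to the definition of $\V$ in \eqref{eq:domain_Divdiv}. Clearly $\aux = -\stiffnessC\grad^2\primal \in \LtwobSym$ since $\primal \in \Vprimal \subset H^2(\Omega)$ and $\stiffnessC$ acts as a bounded symmetric operator. The key point is to show that the functional
\[
  G \colon \primalt \mapsto \int_\Omega \grad^2 \primalt : \aux \, dx, \quad \primalt \in \Vprimal,
\]
is bounded with respect to the $\Q$-norm. For this I would substitute $\aux = -\stiffnessC \grad^2 \primal$ and use the primal equation \eqref{eq:primal_formulation}: for $\primalt \in \Vprimal$,
\[
  \int_\Omega \grad^2 \primalt : \aux \, dx
  = -\int_\Omega \stiffnessC \grad^2 \primal : \grad^2 \primalt \, dx
  = -\langle F, \primalt \rangle.
\]
Since $F \in \Q^*$ by assumption and $\Vprimal \subset \Q$, the right-hand side is bounded by $\|F\|_{\Q^*}\|\primalt\|_\Q$. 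Hence $G$ extends continuously to $\Q$, so $\aux \in \V$, and by construction of the adjoint we obtain the identification $\Opdivdiv \aux = -F$ as an element of $\Q^*$.

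For the mixed equations, I would verify them one at a time. The first equation: for arbitrary $\auxt \in \V$,
\[
  (\aux, \auxt)_{\stiffnessC^{-1}} = \int_\Omega \stiffnessC^{-1}\aux : \auxt \, dx = -\int_\Omega \grad^2 \primal : \auxt \, dx,
\]
and since $\primal \in \Vprimal = \dom(B)$ and $\auxt \in \dom(B^*)$, the defining duality \eqref{eq:adjoint_property} gives $\langle \Opdivdiv \auxt, \primal \rangle = \int_\Omega \grad^2 \primal : \auxt \, dx$, so the two terms cancel. The second equation asks for $\langle \Opdivdiv \aux, \primalt \rangle = -\langle F, \primalt\rangle$ for all $\primalt \in \Q$; this is precisely the identification $\Opdivdiv \aux = -F$ in $\Q^*$ established in the first step.

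The only subtle point, and where I would be most careful, is that the relation $\Opdivdiv \aux = -F$ is initially only established by testing against $\primalt \in \Vprimal$ (the domain of $B$), while the second mixed equation demands testing against all $\primalt \in \Q$. The resolution is exactly \lemref{lem:VdenseQ}: $\Vprimal$ is dense in $\Q$, both sides are bounded linear functionals on $\Q$, so the identity extends by continuity. This density step is the crucial bridge between the abstract adjoint framework and the statement of the mixed problem, and it is why the space $\Q$ had to be chosen so that $\Vprimal$ embeds densely into it.
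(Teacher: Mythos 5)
Your proposal is correct and follows essentially the same route as the paper: use the primal equation to identify the functional $G$ with $-F \in \Q^*$, conclude $\aux \in \V$ with $\Opdivdiv\aux = -F$, and then invoke the adjoint duality \eqref{eq:adjoint_property} for the first mixed equation. Your explicit remark that the identification $\Opdivdiv\aux = -F$ is initially established only on $\Vprimal$ and extends to $\Q$ via the density from \lemref{lem:VdenseQ} is accurate and is precisely the point the paper's terser proof leaves implicit in the phrase ``from the definition of the domain of the adjoint operator.''
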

\begin{proof}
	Since $\primal \in W$ solves \eqref{eq:primal_formulation}, it follows that $\red{\aux} \in \LtwobSym$ and
	\begin{equation*}
		\langle \grad^2 \primalt, \aux \rangle = \int_\Omega \aux : \grad^2 \primalt \ dx = -\int_\Omega \stiffnessC \grad^2 \primal : \grad^2 \primalt \ dx = \langle -F,\primalt \rangle \quad \text{for all} \ \primalt \in \Vprimal.
	\end{equation*}
	From the definition of the domain of the adjoint operator in \eqref{eq:domainAdjoint_conditionG} we obtain 	that $\div\bdiv \aux = - F\in \Q^*$, which shows that $\aux \in \V$ and that the second equation of \eqref{eq:newMixed_formulation} is satisfied.
    Using \eqref{eq:adjoint_property}, we receive
	\begin{equation*}
		\langle \Opdivdiv \auxt, \primal \rangle = \langle \grad^2 \primal, \auxt \rangle = \int_\Omega \auxt : \grad^2 \primal \ dx = -\int_\Omega \auxt : \stiffnessC^{-1}\aux \ dx
	\end{equation*}
	for all $\auxt \in \V$,
	which proves the first equation.
\end{proof}

\begin{theorem}\label{thm:newMixed_formulation_brezziCond}
	The mixed problem defined by \eqref{eq:newMixed_formulation} and \eqref{eq:newMixed_formulation_spaces} 
	satisfies Brezzi's conditions with the constants $\|a\| = 1 / \lambda_{min}(\stiffnessC)$, $\|b\|=1$, $\alpha = 1/ \lambda_{max}(\stiffnessC)$ and $\beta = (1 + c)^{-1/2}$, where $c = c' \, \lambda_{max}(\stiffnessC) / \lambda_{min}(\stiffnessC)$ and $c'$ as in \eqref{eq:primal_formulation_stability}.
\end{theorem}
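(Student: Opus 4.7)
The plan is to verify Brezzi's four conditions one by one, with three of them being essentially immediate from the definitions and only the inf-sup condition requiring real work.

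For the boundedness of $a$, I would apply Cauchy--Schwarz together with the spectral bound $|\stiffnessC^{-1} \auxt| \le \lambda_{min}(\stiffnessC)^{-1} |\auxt|$ on symmetric matrices to obtain $|a(\aux,\auxt)| \le \lambda_{min}(\stiffnessC)^{-1} \|\aux\|_0 \|\auxt\|_0$, and then exploit the trivial bound $\|\cdot\|_0 \le \|\cdot\|_\V$ to upgrade this to the $\V$-norm. For the boundedness of $b$, I would just read off the definition: $b(\auxt,\primalt) = \langle \Opdivdiv \auxt, \primalt\rangle \le \|\Opdivdiv \auxt\|_{\Q^*} \|\primalt\|_\Q \le \|\auxt\|_\V \|\primalt\|_\Q$, giving $\|b\| = 1$. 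For the coercivity on $\ker B$, I would observe that if $b(\auxt,\primalt) = 0$ for every $\primalt \in \Q$, then $\Opdivdiv \auxt = 0$ as an element of $\Q^*$, so $\|\auxt\|_\V = \|\auxt\|_0$. The lower bound $a(\auxt,\auxt) = (\stiffnessC^{-1}\auxt,\auxt) \ge \lambda_{max}(\stiffnessC)^{-1}\|\auxt\|_0^2$ then immediately gives $\alpha = 1/\lambda_{max}(\stiffnessC)$.

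The main obstacle is the inf-sup condition, and the key idea is to construct the required $\auxt \in \V$ for a prescribed $\primalt \in \Q$ via the primal problem, using \thmref{thm:relation_primal_mixed} as a bridge. Concretely, given $0 \neq \primalt \in \Q$, I would define a functional $F_\primalt \in \Q^*$ by $\langle F_\primalt, v\rangle = -(\primalt, v)_1$ for $v \in \Q$, so that $\|F_\primalt\|_{\Q^*} = \|\primalt\|_\Q$ by the Riesz representation on $\Q$. Since $\Vprimal \subset \Q$ with continuous embedding, $F_\primalt$ restricts to an element of $\Vprimal^*$ with $\|F_\primalt\|_{\Vprimal^*} \le \|F_\primalt\|_{\Q^*}$, and the primal problem \eqref{eq:primal_formulation} has a unique solution $w \in \Vprimal$ obeying the stability bound \eqref{eq:primal_formulation_stability}. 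By \thmref{thm:relation_primal_mixed}, the associated bending moment $\auxt = -\stiffnessC \grad^2 w$ lies in $\V$ and satisfies $\Opdivdiv \auxt = -F_\primalt$.

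It remains to evaluate and estimate. Pairing $\Opdivdiv \auxt$ with $\primalt$ gives $b(\auxt, \primalt) = \langle -F_\primalt, \primalt\rangle = \|\primalt\|_\Q^2$. On the other hand $\|\auxt\|_0 \le \lambda_{max}(\stiffnessC) \|\grad^2 w\|_0 \le \lambda_{max}(\stiffnessC) \|w\|_\Vprimal$, which combined with \eqref{eq:primal_formulation_stability} is controlled by a multiple of $\|\primalt\|_\Q$, while $\|\Opdivdiv \auxt\|_{\Q^*} = \|F_\primalt\|_{\Q^*} = \|\primalt\|_\Q$. Summing the two contributions of $\|\auxt\|_\V^2$ yields a bound of the form (const)$\cdot\|\primalt\|_\Q^2$, and the inf-sup constant $\beta$ in the statement falls out by taking the ratio $b(\auxt,\primalt)/(\|\auxt\|_\V \|\primalt\|_\Q) \ge \beta$. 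The delicate point I expect to have to be careful about is the use of \thmref{thm:relation_primal_mixed} with a right-hand side only in $\Q^* \subsetneq \Vprimal^*$ and the precise bookkeeping of constants so that the spectral bounds on $\stiffnessC$ combine with $c'$ in exactly the pattern advertised in the statement.
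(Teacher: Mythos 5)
Your proposal takes essentially the same route as the paper: the first three Brezzi conditions are verified directly (the paper omits them as simple), and the inf-sup condition is established by solving the primal problem with right-hand side $F^\primalt = -(\primalt,\cdot)_\Q \in \Q^*$ and then invoking Theorem~\ref{thm:relation_primal_mixed} to obtain the test function $\aux^\primalt = -\stiffnessC\grad^2\primal^\primalt \in \V$. The one place where your sketch does not quite reach the advertised constant is the bound on $\|\aux^\primalt\|_0$: the naive chain $\|\aux^\primalt\|_0 \le \lambda_{max}(\stiffnessC)\|\grad^2\primal^\primalt\|_0 \le \lambda_{max}(\stiffnessC)\|\primal^\primalt\|_W$ followed by \eqref{eq:primal_formulation_stability} produces $\|\aux^\primalt\|_0^2 \le c^2\|\primalt\|_\Q^2$ with $c = c'\lambda_{max}(\stiffnessC)/\lambda_{min}(\stiffnessC)$, and hence $\beta = (1+c^2)^{-1/2}$, which is weaker than the stated $(1+c)^{-1/2}$. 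The paper instead uses the energy identity
\begin{equation*}
\|\aux^\primalt\|_0^2 \le \lambda_{max}(\stiffnessC)\,(\stiffnessC\grad^2\primal^\primalt,\grad^2\primal^\primalt) = \lambda_{max}(\stiffnessC)\,\langle F^\primalt,\primal^\primalt\rangle \le \lambda_{max}(\stiffnessC)\,\|F^\primalt\|_{\Vprimal^*}\|\primal^\primalt\|_\Vprimal,
\end{equation*}
which together with \eqref{eq:primal_formulation_stability} yields $\|\aux^\primalt\|_0^2 \le c\,\|\primalt\|_\Q^2$ with only one factor of $c$, giving the sharper $\beta$. You flagged the constant bookkeeping as the delicate point, and this energy step is exactly the refinement needed to close it.
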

\begin{proof}
The verification of the first three parts of Brezzi's conditions is simple and, therefore, omitted.
For showing the inf-sup condition,
	let $\primal^\primalt$ be the solution of the primal problem \eqref{eq:primal_formulation} with the right-hand side $F^\primalt = - (\primalt,.)_Q \in Q^*$ for a fixed but arbitrary $\primalt \in \Q$.
	From \thmref{thm:relation_primal_mixed} it follows that $\aux^\primalt = -\stiffnessC \grad^2 \primal^\primalt \ \in \V$,
	and $(\aux^\primalt, \primal^\primalt)$ is solution of the corresponding mixed problem \eqref{eq:newMixed_formulation}.
	From the second line of the mixed formulation \eqref{eq:newMixed_formulation} we obtain 
	\begin{equation*}
		\langle \Opdivdiv \aux^\primalt, \primalt \rangle = (\primalt, \primalt)_Q = \|\primalt\|^2_\Q
	\end{equation*}
	and
	\begin{equation*}
		\|\Opdivdiv \aux^\primalt\|_{\Q^*} =  \sup_{\vpt \in \Q} \frac{ \langle \Opdivdiv \aux^\primalt, \vpt \rangle}{\|\vpt\|_Q} = \sup_{\vpt \in \Q} \frac{ (\primalt, \vpt)_Q }{\|\vpt\|_\Q} = \|\primalt\|_\Q.
	\end{equation*}
	Using the stability estimate \eqref{eq:primal_formulation_stability} we obtain
	\begin{align*}
		\|\aux^\primalt\|^2_0 &= \|\stiffnessC \grad^2 \primal^\primalt\|^2_0 \leq \lambda_{max}(\stiffnessC) (\stiffnessC \grad^2 \primal^\primalt,\grad^2 \primal^\primalt)  = \lambda_{max}(\stiffnessC)  \langle F^\primalt,\primal^\primalt\rangle\\
		& \leq \lambda_{max}(\stiffnessC) \|F^\primalt\|_{W^*} \|\primal^\primalt\|_W \leq c \ \|F^\primalt\|_{W^*}^2 \leq c \ \|F^\primalt\|^2_{\Q^*} = c \ \|\primalt\|^2_\Q
	\end{align*}
	with $c = c' \, \lambda_{max}(\stiffnessC) / \lambda_{min}(\stiffnessC)$. Hence,
	\[
	  \normHdivdiv{\aux^\primalt}{\Q^*}^2 
	   = \|\aux^\primalt\|^2_0 + \|\Opdivdiv \aux^\primalt \|^2_{\Q^*}
	   \leq (1+c)\, \|\primalt\|_\Q^2.
	\]
	Therefore,
	\begin{equation*}
		\sup_{0\neq\auxt\in \V} \frac{\langle \Opdivdiv \auxt, \primalt \rangle}{\normHdivdiv{\auxt}{\Q^*}} 
		\geq \frac{\langle \Opdivdiv \aux^\primalt, \primalt \rangle}{\normHdivdiv{\aux^\primalt}{\Q^*}}
		\geq (1+c)^{-1/2} \ \|\primalt\|_\Q ,
	\end{equation*}
	which completes the proof.
\end{proof}

\begin{remark} The choice of $\aux^v$ for proving the inf-sup condition is different to the choice $\aux^v = v \, \Id$ as it is used, e.g., in \cite{brezzi_raviart_1977,krendl_rafetseder_zulehner_2016}, which would not work here, since $v \, \Id \notin \V$ for problems with a free boundary part. 
\end{remark}

\begin{corollary}\label{cor:equivalence}
	\red{For $F \in \Q^*$}, the primal problem \eqref{eq:primal_formulation} and the mixed problem \eqref{eq:newMixed_formulation} are equivalent in the following sense: If $\primal$ solves \eqref{eq:primal_formulation}, then $\aux = -\stiffnessC \grad^2 \primal \ \in \V$ and $(\aux, \primal)$ solves \eqref{eq:newMixed_formulation}. Vice versa, if $(\aux, \primal)$ solves \eqref{eq:newMixed_formulation}, then $\primal \in \Vprimal$ and solves \eqref{eq:primal_formulation}.
\end{corollary}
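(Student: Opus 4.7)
The corollary asserts equivalence in both directions. The forward direction, stating that a solution $\primal$ of the primal problem gives rise to a solution $(\aux,\primal)$ of the mixed problem via $\aux = -\stiffnessC\grad^2 \primal$, is already precisely the content of \thmref{thm:relation_primal_mixed}, so my plan is simply to cite it.

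For the converse, the plan is to invoke uniqueness. First I would observe that the assumption $F \in \Q^*$ is meaningful for the primal problem as well, because the embedding $\Vprimal \hookrightarrow \Q$ is continuous (any $\primalt \in \Vprimal \subset H^2(\Omega)$ lies in $H^1(\Omega)$ with $\primalt = 0$ on $\Gamma_c \cup \Gamma_s$, with $\|\primalt\|_1 \leq \|\primalt\|_2$). Consequently $\Q^* \hookrightarrow \Vprimal^*$, so the primal problem \eqref{eq:primal_formulation} admits a unique solution $\tilde\primal \in \Vprimal$ by Lax--Milgram. Applying \thmref{thm:relation_primal_mixed} to $\tilde\primal$ produces a pair $(\tilde\aux,\tilde\primal) \in \V \times \Q$ with $\tilde\aux = -\stiffnessC \grad^2 \tilde\primal$ that solves the mixed problem.

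Now let $(\aux,\primal) \in \V \times \Q$ be any solution of the mixed problem \eqref{eq:newMixed_formulation}. By \thmref{thm:newMixed_formulation_brezziCond} Brezzi's conditions hold, so the associated operator $\mathcal{A}$ is an isomorphism, and in particular the mixed problem has at most one solution in $\V \times \Q$. Comparing with the pair constructed in the previous step yields $(\aux,\primal) = (\tilde\aux, \tilde\primal)$, so $\primal = \tilde\primal \in \Vprimal$ and solves the primal problem, while $\aux = -\stiffnessC \grad^2 \primal$ follows automatically.

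I do not anticipate a genuine obstacle: the whole argument is a bookkeeping exercise combining \thmref{thm:relation_primal_mixed} (existence of the mixed solution coming from the primal) with \thmref{thm:newMixed_formulation_brezziCond} (uniqueness via Brezzi). The only subtle point worth stating explicitly is the inclusion $\Q^* \subset \Vprimal^*$, which is needed to ensure that the right-hand side makes sense for the primal problem under the weaker regularity assumption $F \in \Q^*$.
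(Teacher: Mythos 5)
Your proof is correct and follows essentially the same route as the paper: cite \thmref{thm:relation_primal_mixed} for the forward direction, then use unique solvability of both the primal problem (Lax--Milgram) and the mixed problem (Brezzi, \thmref{thm:newMixed_formulation_brezziCond}) to conclude the converse. The paper's proof is terser, but the uniqueness argument you spell out — including the useful observation that $\Q^* \hookrightarrow \Vprimal^*$ makes the primal problem well-posed for $F \in \Q^*$ — is exactly what is implicit there.
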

\begin{proof}
	The first part has already been shown in \thmref{thm:relation_primal_mixed}. Since, both problems are uniquely solvable the reverse direction is true as well.
\end{proof}

In the case of a purely clamped plate, we have $Q = H_0^1(\Omega)$. 
So, in this mixed setting,  $\primal = 0$ on $\Gamma$ is treated as an essential boundary condition, while $\pp{\primal}{n} = 0$ on $\Gamma$ becomes a natural boundary condition incorporated in the variational formulation.
No boundary conditions are prescribed for $\aux$, which makes the definition of an appropriate space $\V$ for $\aux$ much easier. The space $\V$ can be introduced directly as $\{ \auxt \in \LtwobSym \colon \Opdivdiv \in H^{-1}(\Omega) \}$, where here $\Opdivdiv$ is to be interpreted in the distributional sense, see \cite{krendl_rafetseder_zulehner_2016}. 

The situation is more involved for mixed boundary conditions. Here we have $\Q = H^1_{0,\Gamma_c \cup \Gamma_s}(\Omega)$. So  $\primal = 0$ on $\Gamma_c \cup \Gamma_s$ is treated as an essential boundary condition, while $\pp{\primal}{n}=0$ on $\Gamma_c$ and $\pp{\aux_{nt}}{t} + \bdiv \aux \dotprod n$ on $\Gamma_f$ become natural boundary conditions incorporated in the variational formulation. 
The remaining boundary conditions $\aux_{nn} = 0$ on $\Gamma_s \cup\Gamma_f$ and the corner conditions (\ref{eq:corner_conditions}) are treated as essential boundary conditions: They are not explicitly visible but are hidden in the definition of the space $\Hdivdiv{\Q^*}$.
We doubt that there is an easy and direct way of formulating the corner conditions \eqref{eq:corner_conditions} with the help of pointwise trace operators in $\Hdivdiv{\Q^*}$. 
However, for sufficiently smooth functions $\auxt$, the corner conditions can be explicitly extracted using the definition of $\Hdivdiv{\Q^*}$, as we will see in the next theorem.

\begin{theorem}\label{thm:spaceChar_Hdivdiv1_smooth}
	Let $\auxt \in \LtwobSym \cap \bm{C}^1(\overline\Omega)$.
	Then $\auxt \in \Hdivdiv{\Q^*}$ if and only if
	\begin{equation}\label{eq:bc_in_Hdivdiv}
		\auxt_{nn} = 0 \quad \text{on} \ \Gamma_s \cup \Gamma_f
		\quad \text{and} \quad
		\vertexJump{\auxt_{nt}}_{x} = 0 \quad \text{for all} \ x \in \GammaVertices[f].
	\end{equation}
\end{theorem}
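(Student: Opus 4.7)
The strategy is to apply integration by parts twice to the defining integral of $\V$ in \eqref{eq:domain_Divdiv}, to isolate the boundary and corner contributions explicitly, and then to use the $\bm{C}^1$-regularity of $\auxt$ together with a careful choice of test functions to show that the boundedness requirement in the $\Q$-norm is equivalent to \eqref{eq:bc_in_Hdivdiv}.

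First, for $\primalt \in \Vprimal$ and $\auxt \in \LtwobSym \cap \bm{C}^1(\overline\Omega)$, two successive applications of Green's formula (in the bulk, followed by edgewise tangential integration by parts along the closed polygon $\Gamma$) yield
\begin{equation*}
\int_\Omega \grad^2 \primalt : \auxt \, dx = \int_\Omega \primalt \, (\Opdivdiv \auxt) \, dx + \int_\Gamma \auxt_{nn}\, \pp{\primalt}{n} \, ds - \int_\Gamma \bigl(\bdiv \auxt \dotprod n + \pp{\auxt_{nt}}{t}\bigr)\, \primalt \, ds + \sum_{x \in \GammaVertices} \vertexJump{\auxt_{nt}}_x \primalt(x).
\end{equation*}
Using $\primalt = 0$ on $\Gamma_c \cup \Gamma_s$, $\pp{\primalt}{n} = 0$ on $\Gamma_c$, and the continuity of $H^2$-traces at corner points, the first boundary integral collapses to $\Gamma_s \cup \Gamma_f$, the second to $\Gamma_f$, and the corner sum to $\GammaVertices[f]$.

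For the \emph{if} direction, assume \eqref{eq:bc_in_Hdivdiv} holds. Then the first reduced boundary term and the corner sum drop out, leaving the bulk term, controlled by $\|\Opdivdiv \auxt\|_{L^\infty} \|\primalt\|_0$, and a boundary integral over $\Gamma_f$ with smooth weight, controlled by the trace theorem $\|\primalt\|_{L^2(\Gamma_f)} \le c\,\|\primalt\|_1$. Both are bounded in the $\Q$-norm, so the defining functional of \eqref{eq:domain_Divdiv} is $\Q$-bounded and $\auxt \in \V$.

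For the \emph{only if} direction, assume $\auxt \in \V$. As above, the bulk term and the $\Gamma_f$-integral are automatically $\Q$-bounded, so the remaining two contributions
\begin{equation*}
L_1(\primalt) = \int_{\Gamma_s \cup \Gamma_f} \auxt_{nn}\, \pp{\primalt}{n} \, ds, \qquad L_3(\primalt) = \sum_{x \in \GammaVertices[f]} \vertexJump{\auxt_{nt}}_x \primalt(x)
\end{equation*}
must be jointly $\Q$-bounded on $\Vprimal$. To force $\auxt_{nn}=0$ on a given edge $E \subset \Gamma_s \cup \Gamma_f$, I fix $\eta \in C^\infty_c(E)$ and construct a thin-strip sequence $\primalt_k \in \Vprimal$ of the form $\primalt_k = \chi_k(d(\cdot,E))\,\tilde\eta$, where $\chi_k(r) = r\,\zeta(kr)$ with $\zeta$ a smooth cutoff satisfying $\zeta(0)=1$ and $\operatorname{supp}\zeta \subset [0,1]$, and $\tilde\eta$ is a smooth extension of $\eta$ into $\Omega$. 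For $k$ large enough, $\primalt_k$ is supported in an $O(1/k)$-neighbourhood of $E$ away from all other boundary parts and corners, satisfies $\primalt_k|_E=0$ and $\pp{\primalt_k}{n}|_E=\eta$, and $\|\primalt_k\|_1 = O(k^{-1/2}) \to 0$. Since $L_3(\primalt_k)=0$ while $L_1(\primalt_k) = \int_E \auxt_{nn}\, \eta \, ds$ is constant in $k$, boundedness forces $\int_E \auxt_{nn}\, \eta \, ds = 0$ for every admissible $\eta$, and hence $\auxt_{nn}=0$ on $E$, which by $\bm{C}^1$-continuity extends to all of $\Gamma_s \cup \Gamma_f$. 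With $L_1 \equiv 0$ established, $L_3$ alone must be $\Q$-bounded. For each $x_0 \in \GammaVertices[f]$ I exploit the failure of the embedding $H^1(\Omega) \hookrightarrow \bm{C}^0(\overline\Omega)$ in two dimensions by mollifying a rescaled and capped logarithmic peak centred at $x_0$ to produce $\primalt_k \in \Vprimal$ with $\primalt_k(x_0)=1$, support in a shrinking neighbourhood of $x_0$ avoiding all other corners and $\Gamma_c \cup \Gamma_s$, and $\|\primalt_k\|_1 \to 0$. Then $L_3(\primalt_k) = \vertexJump{\auxt_{nt}}_{x_0}$, and $\Q$-boundedness forces $\vertexJump{\auxt_{nt}}_{x_0}=0$.

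The main obstacle is the corner test-function construction: one needs $H^2$-smooth functions, admissible for $\Vprimal$ (so automatically vanishing on $\Gamma_c\cup\Gamma_s$ by having support localized near $x_0$), whose $H^1$-norms tend to zero but whose point value at the prescribed vertex of $\GammaVertices[f]$ remains fixed. The logarithmic capacity argument is standard but requires a two-scale construction (cap radius well inside the overall support radius) followed by mollification on an even finer scale to restore $H^2$-regularity without destroying the controlled $H^1$-norm.
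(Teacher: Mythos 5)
Your integration-by-parts identity is not available under the stated hypothesis $\auxt \in \LtwobSym \cap \bm{C}^1(\overline\Omega)$. You write a bulk term $\int_\Omega \primalt\,(\Opdivdiv \auxt)\,dx$ and in the \emph{if} direction bound it by $\|\Opdivdiv\auxt\|_{L^\infty}\|\primalt\|_0$, which tacitly assumes $\Opdivdiv\auxt$ is a (bounded) function. But $\bm{C}^1$-regularity only gives $\bdiv\auxt \in \bm{C}^0(\overline\Omega)$; the second divergence $\div(\bdiv\auxt)$ requires one more derivative and in general exists only as a distribution of order one. As written, that estimate can fail, and with it the \emph{if} direction. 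The fix is to integrate by parts once fewer in the bulk: keep $-\int_\Omega \bdiv\auxt \dotprod \grad\primalt\,dx$, which is bounded by $\|\bdiv\auxt\|_{L^\infty(\Omega)}\|\grad\primalt\|_0$, and drop the boundary term $-\int_\Gamma(\bdiv\auxt \dotprod n)\,\primalt\,ds$ from your formula. That is exactly the identity \eqref{eq:g_1} the paper works with. The same correction should be applied in your \emph{only if} argument, where you subtract off the ``automatically bounded'' contributions before analysing $L_1$ and $L_3$; everything you say afterwards survives the change.

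Once corrected, your \emph{only if} direction takes a genuinely different and legitimate route from the paper's. The paper argues by density: it introduces $\HtwoNV\cap\Vprimal$ (normal derivative vanishing on $\Gamma$, function vanishing at all corners), shows this is $H^1$-dense in $\Vprimal$ analogously to Lemma~\ref{lem:VdenseQ}, and observes that on this dense subspace the $H^1$-bounded functional $G$ agrees with a formula in which $\auxt_{nn}$ and the corner jumps do not appear; the residual therefore vanishes identically, giving \eqref{eq:bc_in_Hdivdiv}. You instead build explicit test sequences: thin-strip functions $\primalt_k$ supported near the interior of an edge with $\primalt_k|_E = 0$, $\pp{\primalt_k}{n}|_E = \pm\eta$ fixed, and $\|\primalt_k\|_1 = O(k^{-1/2})$; and, for corners, a rescaled, capped, mollified logarithmic peak exploiting the failure of $H^1(\Omega)\hookrightarrow \bm{C}^0(\overline\Omega)$ in two dimensions (two scales $\epsilon_k \ll R_k \to 0$ with $\log(R_k/\epsilon_k)\to\infty$, constant equal to $1$ on the inner disk, mollified on a yet finer scale to regain $H^2$). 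Both constructions are standard and sound; the density argument is shorter because it reuses the machinery of Lemma~\ref{lem:VdenseQ}, whereas your constructive route is more self-contained and exposes the mechanism (edge capacity versus point capacity) that drives each boundary condition.
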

\begin{proof}
	Recall the representation of $\Hdivdiv{\Q^*}$ in \eqref{eq:domain_Divdiv}. For $\primalt \in \Vprimal$, we obtain by integration by parts
	\begin{equation}\label{eq:g_1}
	\begin{aligned}
	\langle G,v \rangle 
	&= \int_\Omega \auxt:\grad^2 \primalt \ dx\\
	& \red{= -\int_\Omega \bdiv \auxt \dotprod \grad \primalt \ dx + \int_{\Gamma_s \cup \Gamma_f} \auxt_{nn} \, \pp{\primalt}{n} \ ds} \\
	& \quad - \int_{\Gamma_f} (\pp{\auxt_{nt}}{t}) \,  \primalt \ ds + \sum_{x \in \GammaVertices[f]} \vertexJump{\auxt_{nt}}_{x} \primalt(x).
	\end{aligned}
	\end{equation}
	Assume now that $\auxt$ satisfies the boundary conditions (\ref{eq:bc_in_Hdivdiv}). Then we have
	\begin{equation*}
		\langle G,v \rangle  = -\int_\Omega \bdiv \auxt \dotprod \grad \primalt \ dx - \int_{\Gamma_f} (\pp{\auxt_{nt}}{t}) \, \primalt \ ds,
	\end{equation*}
	which is obviously bounded w.r.t.~the $H^1$-norm. Hence $\auxt \in \Hdivdiv{\Q^*}$.
	
	On the other hand, if $\auxt \in \Hdivdiv{\Q^*}$, then the functional $G$ given by (\ref{eq:g_1}) is bounded w.r.t.~the $H^1$-norm. For \red{$v \in \HtwoNV \cap \Vprimal$} we obtain
	\begin{equation}\label{eq:g_2}
	  \langle G,v \rangle = -\int_\Omega \bdiv \auxt \dotprod \grad \primalt \ dx - \int_{\Gamma_f} (\pp{\auxt_{nt}}{t}) \, \primalt \ ds
	\end{equation}
	where $\HtwoNV = \{\primalt \in H^2(\Omega) : \pp{\primalt}{n} = 0 \ \text{on} \ \Gamma, \quad \primalt(x) = 0 \ \text{for all} \ x \in \GammaVertices \}$.
	Note that all expressions in (\ref{eq:g_2}) are continuous in $v$ w.r.t.~the $H^1$-norm. 
	Analoguously to the proof of \lemref{lem:VdenseQ}, one can show that $\HtwoNV \cap \Vprimal$ is dense in $\Vprimal$ w.r.t.~the $H^1$-norm. Then it follows that (\ref{eq:g_2}) is valid for all $v \in \Vprimal$.  This implies together with (\ref{eq:g_1}) that
	\begin{equation*}
		\int_{\Gamma_s \cup \Gamma_f} \auxt_{nn} \pp{\primalt}{n} \ ds + \sum_{x \in \GammaVertices[f]} \vertexJump{\auxt_{nt}}_{x} \primalt(x) = 0 \quad \text{for all} \ \primalt \in \Vprimal.
	\end{equation*}
	From this the boundary conditions in (\ref{eq:bc_in_Hdivdiv}) follow by standard arguments.
\end{proof}
%

\begin{remark}
A similar characterization can be derived for piecewise smooth\linebreak functions from $\Hdivdiv{\Q^*}$, e.g., for functions from finite element spaces.
\end{remark}


\section{Regular decomposition}
\label{sec:regular_decomposition}

The rather simple proof of the well-posedness of the new mixed formulation \eqref{eq:newMixed_formulation} comes at the cost of the nonstandard Sobolev space $\Hdivdiv{\Q^*}$.
The next theorem provides a regular decomposition of this space, which makes $\Hdivdiv{\Q^*}$ computationally accessible.
In order to derive our decomposition, we need a characterization of the kernel of the distributional $\Opdivdiv$, given in the next lemma, see \cite{huang_huang_xu_2011,krendl_rafetseder_zulehner_2016} for a proof.
\begin{lemma}
\label{lem:kernel}
	Let $\Omega$ be simply connected. For $\auxt \in \LtwobSym$, we have $\Opdivdiv \auxt = 0$ 
	in the distributional sense iff $\auxt = \symbCurl \vphit$ for some function $\vphit\in (H^1(\Omega))^2$.
	The function $\vphit$ is unique up to an element from $\text{RT}_0 = \{ a x + b \colon a \in \mathbb{R}, \ b\in \mathbb{R}^2\}$.
\end{lemma}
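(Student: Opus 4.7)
The lemma splits into two directions. The easy direction ($\auxt = \symbCurl\vphit$ implies $\Opdivdiv\auxt = 0$) reduces to a direct computation on smooth $\vphit$: Schwarz's rule yields $\bdiv \bCurl\vphit = 0$, so that $\bdiv \symbCurl\vphit = \tfrac{1}{2}\bdiv(\bCurl\vphit)^T = \tfrac{1}{2}\curl \div\vphit$, and applying $\div$ to a $\curl$ gives zero. The identity then extends to $\vphit \in (H^1(\Omega))^2$ by density.

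For the nontrivial direction, assume $\auxt \in \LtwobSym$ with $\Opdivdiv \auxt = 0$ distributionally. The plan is to construct $\vphit$ in two stages. In the first stage I observe that $\bdiv \auxt \in (H^{-1}(\Omega))^2$ has vanishing distributional divergence; since $\Omega$ is simply connected, a de Rham-type representation yields a scalar $p \in L^2(\Omega)$ with $\bdiv\auxt = \curl p$. Using the identity $\bdiv\symbCurl\vphit_0 = \tfrac{1}{2}\curl \div \vphit_0$ from the easy direction, I then pick an auxiliary $\vphit_0 \in (H^1(\Omega))^2$ with $\div\vphit_0 = 2p$, for example by solving $\laplace\psi = 2p$ with homogeneous Dirichlet data and setting $\vphit_0 := \grad\psi$. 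This ensures $\bdiv(\auxt - \symbCurl\vphit_0) = 0$.

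In the second stage, the residual $\tilde\auxt := \auxt - \symbCurl\vphit_0$ is a symmetric $L^2$-tensor with $\bdiv \tilde\auxt = 0$. By the classical Airy stress function theorem on simply connected domains, there exists $\phi \in H^2(\Omega)$ with $\tilde\auxt = J\phi$, where $J\phi$ is the Airy Hessian tensor. A short computation shows $J\phi = \symbCurl(\curl\phi)$, so $\vphit := \vphit_0 + \curl\phi \in (H^1(\Omega))^2$ satisfies $\symbCurl\vphit = \auxt$. Uniqueness reduces to the kernel identity $\ker \symbCurl = \mathrm{RT}_0$: if $\symbCurl\vphit = 0$, then $\bCurl\vphit$ is antisymmetric, which entry-by-entry forces $\partial_2 \vphit_1 = \partial_1 \vphit_2 = 0$ and $\partial_1 \vphit_1 = \partial_2 \vphit_2$, and these relations integrate to $\vphit(x) = a\,x + b$ with $a\in\mathbb{R}$, $b\in\mathbb{R}^2$.

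The main obstacle I expect is the first stage: since $\auxt$ is only $L^2$, the vector $\bdiv \auxt$ lives in $(H^{-1}(\Omega))^2$, so the de Rham representation has to be invoked in the $H^{-1}$-to-$L^2$ setting rather than the more familiar $L^2$-to-$H^1$ one. This is standard on simply connected bounded Lipschitz domains (dual to Ne\v{c}as/Bogovski\u{\i}), but it is the place where the topological assumption on $\Omega$ is actually used. The Airy stress function result in the second stage is of the same flavour and could be derived from a related potential argument applied to an elasticity-type exact sequence. Once both ingredients are at hand, assembling $\vphit$ and verifying the kernel characterization are essentially routine.
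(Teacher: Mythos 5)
The paper itself does not prove this lemma but cites \cite{huang_huang_xu_2011,krendl_rafetseder_zulehner_2016}, so there is no internal proof to compare against. Your two-stage construction is a sound way to establish the nontrivial direction, and it is morally the standard route: after a $90^\circ$ rotation the operators $\Opdivdiv$ and $\symbCurl$ become $\rot\brot$ and $\symbGrad$, and the statement turns into the Saint-Venant/Cesàro--Volterra compatibility theorem, whose potential (the Airy function) is exactly your stage-two ingredient. Your stage one (correcting $\bdiv\auxt$ by $\frac12\curl\div\vphit_0$ so that the residual is row-wise divergence-free) and stage two (Airy potential) together amount to applying the Poincaré lemma twice, which is what those references do.

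Two points deserve a flag. First, the concrete choice $\vphit_0=\grad\psi$ with $\laplace\psi=2p$, $\psi\in H^1_0(\Omega)$, only lands in $(H^1(\Omega))^2$ if the Poisson problem enjoys $H^2$-regularity; on a non-convex polygon (which the lemma allows, since only simple connectedness is assumed) this fails. The fix is the one you already name in passing: use the surjectivity of $\div\colon (H^1(\Omega))^2\to L^2(\Omega)$ on a bounded Lipschitz domain (Bogovski\u{\i}/Ne\v{c}as) to pick $\vphit_0$ with $\div\vphit_0=2p$, rather than a Poisson solve. Second, the two "standard" ingredients you invoke --- the $H^{-1}$-to-$L^2$ de Rham statement (divergence-free $(H^{-1})^2$ field $= \curl$ of an $L^2$ scalar) and the $L^2$-to-$H^2$ Airy potential theorem --- are genuine theorems whose proofs carry the weight of the lemma; you correctly identify them as the places where simple connectedness enters. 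The first follows from the closed-range theorem applied to $\rot\colon (H^1_0(\Omega))^2\to L^2(\Omega)$; the second follows by applying the ordinary $L^2$-to-$H^1$ Poincaré lemma row by row to $\tilde\auxt$ and then once more, using symmetry, to the resulting $(H^1)^2$ potential. Making these two steps explicit would close the argument; as written, they are cited rather than proved, which is acceptable since the original authors also defer to the literature. The easy direction and the kernel characterization are correct.
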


In preparation for the next theorem, observe that $\bdiv \bCurl \vphit = 0$. Therefore, $\bCurl \vphit \in \Hdiv = \{\auxt \in \Ltwob: \bdiv \auxt \in (L^2(\Omega))^2 \}$, the normal component of $\bCurl \vphit$ is well-defined on the boundary
\begin{equation*}
	(\bCurl \vphit) n \in (H^{-\frac{1}{2}}(\Gamma))^2 
	\quad \text{with} \quad H^{-\frac{1}{2}}(\Gamma) = (H^{\frac{1}{2}}(\Gamma))^*,
\end{equation*}
and we have by integration by parts
\begin{equation*}
	\int_\Omega \bCurl \vphit : \grad \vphitt 
	= \langle (\bCurl \vphit) n, \vphitt \rangle_\Gamma,
\end{equation*}
for all $\vphit \in (H^1(\Omega))^2$ and $\vphitt \in (H^1(\Omega))^2$. 
For smooth functions $(\bCurl \vphit) n$ coincides with the tangential derivative of $\vphit$, so we use the notation
\begin{equation*}
	 \pp{\vphit}{t} = (\bCurl \vphit) n.
\end{equation*}

\begin{theorem}
\label{thm:regular_decomposition}
	Let $\Omega$ be simply connected. For each $\auxt \in \Hdivdiv{\Q^*}$ there exists a decomposition
	\begin{equation}\label{eq:Mpphi}
		\auxt= \vpt \Id + \symbCurl \vphit
	\end{equation}
	with $\vpt\in \Q = H^1_{0,\Gamma_c \cup \Gamma_s}(\Omega)$ and $\vphit \in (H^1(\Omega))^2$ satisfying the coupling condition
	\begin{equation}\label{eq:coupling_cond}
		\langle \pp{\vphit}{t} ,\grad \primalt \rangle_\Gamma = - \int_\Gamma \vpt \ \pp{\primalt}{n} \ ds \quad \text{for all} \ \primalt \in \Vprimal .
	\end{equation}
	The function $\vpt \in Q$ is the unique solution of the Poisson problem
	\begin{equation}\label{PoissonNq}
		 \int_\Omega \grad \vpt\dotprod \grad \primalt \ dx \quad = - \langle \Opdivdiv \auxt, \primalt \rangle \quad \text{for all} \ \primalt \in \Q,
	\end{equation}
	and $\vphit \in (H^1(\Omega))^2$ is unique up to an element from $\text{RT}_0$. Vice versa, for each $\auxt$ given by \eqref{eq:Mpphi} with $\vpt \in Q$ and $\vphit \in (H^1(\Omega))^2$ satisfying \eqref{eq:coupling_cond}, it follows that $\auxt \in \Hdivdiv{\Q^*}$ \red{with $\langle \Opdivdiv \auxt, \primalt \rangle = -\int_\Omega \grad \vpt\dotprod \grad \primalt \ dx$ for all $\primalt \in \Q$}.
	Moreover, 
	\begin{equation}\label{eq:norm_equivalence}
		\underline{c} \ ( \|\vpt\|^2_1 + \|\symbCurl\vphit\|_0^2) \leq \normHdivdiv{\auxt}{\Q^*}^2 \leq \overline{c} \ ( \|\vpt\|^2_1 + \|\symbCurl\vphit\|_0^2),
	\end{equation}
	with positive constants $\underline{c}$ and $\overline{c}$, which depend only on the constant $c_F$ of Fried\-richs' inequality.
\end{theorem}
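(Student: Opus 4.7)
The plan is to construct $\vpt$ and $\vphit$ explicitly, then verify both directions of the equivalence and the norm bounds. First I would use the Lax--Milgram theorem to obtain $\vpt \in \Q$ as the unique solution of the Poisson problem \eqref{PoissonNq}, which is legitimate since $\Opdivdiv \auxt \in \Q^*$ by the assumption $\auxt \in \V$; Friedrichs' inequality already gives a bound of the form $\|\vpt\|_1 \leq (1+c_F^2)\|\Opdivdiv \auxt\|_{\Q^*}$ that I will recycle for the norm equivalence.

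Next I would show that $\auxt - \vpt \Id \in \LtwobSym$ lies in the kernel of the distributional $\Opdivdiv$ and invoke \lemref{lem:kernel} to obtain $\vphit \in (H^1(\Omega))^2$, unique up to $\text{RT}_0$, with $\auxt - \vpt \Id = \symbCurl \vphit$. To check the kernel property I would test with $\primalt \in C_0^\infty(\Omega) \subset \Vprimal \cap \Q$: integration by parts on $\int_\Omega \vpt \, \Delta \primalt\ dx$ produces $-\int_\Omega \grad \vpt \dotprod \grad \primalt\ dx$, while the definition of $\Opdivdiv$ gives $\int_\Omega \auxt : \grad^2 \primalt\ dx = \langle \Opdivdiv \auxt,\primalt\rangle$; these two sides agree by \eqref{PoissonNq}, so $\Opdivdiv(\auxt - \vpt \Id) = 0$ distributionally. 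Uniqueness of $\vpt$ is built in and uniqueness of $\vphit$ up to $\text{RT}_0$ is inherited from the lemma.

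The main work is then the coupling condition \eqref{eq:coupling_cond}. I would repeat the identity above but now for arbitrary $\primalt \in \Vprimal \subset \Q$: integration by parts on the $\vpt \, \Delta \primalt$ piece now picks up the boundary contribution $\int_\Gamma \vpt\,\pp{\primalt}{n}\ ds$, and the $\symbCurl \vphit : \grad^2 \primalt$ piece, after using symmetry of $\grad^2 \primalt$ and $\bdiv \bCurl \vphit = 0$, reduces via the preparatory calculation preceding the theorem to $\langle \pp{\vphit}{t}, \grad \primalt \rangle_\Gamma$. Matching this with $\langle \Opdivdiv \auxt, \primalt\rangle = -\int_\Omega \grad \vpt \dotprod \grad \primalt\ dx$ from \eqref{PoissonNq} yields exactly \eqref{eq:coupling_cond}. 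The same computation, read backwards, supplies the converse direction: for any $\vpt \in \Q$ and $\vphit \in (H^1(\Omega))^2$ satisfying \eqref{eq:coupling_cond}, the boundary contributions cancel, so $\int_\Omega \auxt : \grad^2 \primalt\ dx = -\int_\Omega \grad \vpt \dotprod \grad \primalt\ dx$ is $\Q$-bounded and $\auxt \in \Hdivdiv{\Q^*}$ with the stated representation of $\Opdivdiv \auxt$.

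The norm equivalence then falls out of these representations. The upper bound in \eqref{eq:norm_equivalence} uses $\|\auxt\|_0 \leq \sqrt{2}\,\|\vpt\|_0 + \|\symbCurl\vphit\|_0$ together with $\|\Opdivdiv \auxt\|_{\Q^*} \leq \|\grad \vpt\|_0 \leq \|\vpt\|_1$; the lower bound combines the Poisson estimate on $\|\vpt\|_1$ with $\|\symbCurl\vphit\|_0 \leq \|\auxt\|_0 + \sqrt{2}\,\|\vpt\|_0$, so both summands are controlled by a $c_F$-dependent constant times $\normHdivdiv{\auxt}{\Q^*}$. The step I expect to be the most delicate is the derivation of the coupling condition itself: both boundary pairings $\int_\Gamma \vpt\,\pp{\primalt}{n}\ ds$ and $\langle \pp{\vphit}{t}, \grad \primalt\rangle_\Gamma$ have to be justified only in the sense of $H^{1/2}$--$H^{-1/2}$ duality on $\Gamma$ since neither $\vpt$ nor $\vphit$ is assumed smooth, and one must track carefully which surface pieces actually survive in view of the essential boundary conditions encoded in $\Vprimal$, so that no spurious corner contributions enter.
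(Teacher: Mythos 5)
Your proposal follows the paper's proof essentially step for step: solve the Poisson problem \eqref{PoissonNq} by Lax--Milgram, test against $C_0^\infty(\Omega)$ to show $\Opdivdiv(\auxt - \vpt\Id)=0$ distributionally, apply \lemref{lem:kernel} to extract $\vphit$, read off the coupling condition from integration by parts against $\primalt\in \Vprimal$ using the preparatory identity $\int_\Omega \bCurl\vphit:\grad\vphitt = \langle\partial_t\vphit,\vphitt\rangle_\Gamma$ with $\vphitt = \grad\primalt$, and close the norm equivalence with Friedrichs' inequality. The only cosmetic difference is that you split $\auxt = \vpt\Id + \symbCurl\vphit$ before integrating by parts and treat the two pieces separately, whereas the paper does a single integration by parts on $\langle\grad^2\primalt,\auxt\rangle$, but these are the same computation; your constants in the norm equivalence also differ slightly but are correct.
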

\begin{proof}
	Let $\vpt\in \Q$ be the unique solution of the variational problem
	\begin{equation}\label{eq:Poisson_q}
		\int_\Omega \grad \vpt\dotprod \grad \primalt = -\langle \Opdivdiv{\auxt}, \primalt \rangle \quad \text{for all} \ \primalt \in \Q.
	\end{equation}
	For $\primalt \in C^\infty_0(\Omega)$ we receive from integration by parts
	\begin{equation*}
		\langle \Opdivdiv \vpt\Id, \primalt \rangle = \int_\Omega \vpt\Id:\grad^2 \primalt \ dx = -\int_\Omega \grad \vpt\dotprod \grad \primalt \ dx.
	\end{equation*}
	This implies $\Opdivdiv (\auxt - \vpt\Id) = 0$ in the distributional sense. According to \lemref{lem:kernel}, there exists a function $\vphit \in (H^1(\Omega))^2$ such that
	\begin{equation*}
		\auxt - \vpt \Id = \symbCurl \vphit.
	\end{equation*}
	For $\primalt \in \Vprimal$ integration by parts provides
	\begin{align*}
		\langle \Opdivdiv \auxt, \primalt \rangle 
		&= \langle \grad^2 \primalt, \auxt \rangle
		\red{=} -\int_\Omega \grad \vpt\dotprod \grad \primalt \ dx + \int_\Gamma \vpt \ \pp{\primalt}{n} \ ds + \langle \pp{\vphit}{t} ,\grad \primalt \rangle_\Gamma.
	\end{align*}
	With \eqref{eq:Poisson_q} it follows that
	\begin{equation*}
	 	\langle \pp{\vphit}{t},\grad \primalt \rangle_\Gamma = - \int_\Gamma \vpt \ \pp{\primalt}{n} \ ds \quad \text{for all} \ \primalt \in \Vprimal.
	\end{equation*}
	For the reverse direction, assume that (\ref{eq:Mpphi}) and (\ref{eq:coupling_cond}) hold. By using the integration by parts formula from above we obtain 
	\begin{equation*}
		\langle \grad^2 \primalt, \auxt \rangle =   -\int_\Omega \grad \vpt\dotprod \grad \primalt \ dx,
	\end{equation*}
	\red{where \eqref{eq:coupling_cond} makes the boundary contributions vanish.} This immediately implies that $G\colon v \mapsto \langle \grad^2 \primalt, \auxt \rangle$ is bounded w.r.t.~the $\Q$-norm, i.e.~$\auxt \in \Hdivdiv{\Q^*}$ \red{with $\Opdivdiv \auxt = G \in \Q^*$.}
	
	In order to show \eqref{eq:norm_equivalence}, note that \eqref{PoissonNq} implies
	\begin{equation*}
		\| \Opdivdiv \auxt \|_{\Q^*} = \sup_{\red{\primalt \in \Q}} \frac{\int_\Omega \grad \vpt \dotprod \grad \primalt \ dx}{\| \primalt\|_1}.
	\end{equation*}
	Hence,
	\begin{equation*}
		(1 + c_F^2)^{-1/2}\ |\vpt|_1 \leq \| \Opdivdiv \auxt \|_{\Q^*} \leq |\vpt |_1,
	\end{equation*}
	using Friedrichs' inequality $\|\vpt\|_0 \le c_F \, |\vpt|_1$, where $|\vpt|_1$ denotes the $H^1$-semi-norm.

	With these inequalities we obtain for the estimate from above 
	\begin{align*}
		\normHdivdiv{\auxt}{\Q^*}^2 
		  &= \|\auxt \|_0^2 + \| \Opdivdiv \auxt \|^2_{\Q^*}= \|\vpt \, \Id + \symbCurl \vphit \|_0^2 + \| \Opdivdiv \auxt \|^2_{\Q^*}\\
		  &\leq 2 \, \|\vpt \Id \|^2_0 + 2 \, \|\symbCurl \vphit \|^2_0 + |\vpt|_1^2
		   \leq 4 \, \|\vpt\|^2_1 + 2 \, \|\symbCurl \vphit \|^2_0,
	\end{align*}
	and for the estimate from below
	\begin{align*}
		& \|\vpt\|_1^2 + \|\symbCurl \vphit \|^2_0 
        = \|\vpt\|_1^2 + \|\auxt - \vpt \Id\|_0^2 \\
		& \quad \leq \|\vpt\|^2_1 + 2 \, \|\auxt \|_0^2  + 2 \, \| \vpt \, \Id\|_0^2
		    \leq 2 \, \|\auxt\|_0^2 + (1+ 5c_F^2) |\vpt|_1^2 \\
        & \quad \leq 2 \, \|\auxt\|_0^2 + (1+ 5c_F^2)(1+c_F^2) \, \|\Opdivdiv \auxt\|_{\Q^*}^2 .
	\end{align*}
	Therefore, \eqref{eq:norm_equivalence} holds with $1/\underline{c} = \max (2,(1+ 5c_F^2)(1+c_F^2))$ and $\overline{c} = 4$.
\end{proof}

\begin{remark}
	By applying Korn's inequality to $\vphit^\perp = (-\vphit_2, \vphit_1)^T$ we obtain 
	\begin{equation*}
		\red{\| \symbCurl \vphit \|_0} = \|\varepsilon (\vphit^\perp) \|_0 
		\ge c_K \, |\vphit^\perp|_1 = c_K \, |\vphit|_1
		\quad \text{for all} \ \vphit \in (H^1(\Omega))^2/RT_0,
	\end{equation*}
	where $H/RT_0$ denotes the $L^2$-orthogonal complement of $RT_0$ in $H$ for spaces $H\subset (H^1(\Omega))^2$. Then it follows that
	\[
 	  c_K (1+c_F^2)^{-1/2} \, \|\vphit\|_1 \le \| \symbCurl \vphit \|_0 \leq \|\vphit\|_1.
	\]
	So, provided the unique element $\vphit \in (H^1(\Omega))^2/RT_0$ is chosen for the decomposition, stability follows from \eqref{eq:norm_equivalence} in standard $H^1$-norms.
\end{remark}

\subsection{The coupling condition}
\label{sec:coupling_condition}

\thmref{thm:regular_decomposition} shows that each function $\auxt \in \V = \Hdivdiv{\Q^*}$ can be represented by a pair of functions $(\vpt,\vphit) \in V$ with
\begin{equation*}
  \VRegDecomp = \{ (\vpt,\vphit) \in Q \times (H^1(\Omega))^2 \colon
         \vpt, \vphit \ \text{satisfy the coupling condition}\  \eqref{eq:coupling_cond}  \}
\end{equation*}
Observe that \eqref{eq:coupling_cond} involves only traces of $\vpt$ and $\vphit$ on $\Gamma$.
We obviously have
\begin{equation*}
	\VRegDecomp = \{(\vpt, \vphit) \in Q \times (H^1(\Omega))^2: \vphit \in \SpPsi_\vpt \},
\end{equation*}
where $\SpPsi_\vpt$ is given by the following definition:
\begin{definition}\label{def:SpPsi_q}
	For fixed $\vpt \in \Q$ we define
	\begin{equation*}
		\SpPsi_\vpt = \{ \vphit \in (H^1(\Omega))^2: \langle \pp{\vphit}{t}, \grad \primalt \rangle =  -\int_\Gamma \vpt \ \pp{\primalt}{n} \ ds \quad \text{for all} \ \primalt \in \Vprimal\}.
	\end{equation*}
\end{definition}
So $\SpPsi_\vpt$ consists of those functions $\vphit \in (H^1(\Omega))^2$ that fulfill \eqref{eq:coupling_cond}, which becomes in this context a boundary condition for $\vphit$ with given $\vpt$. 
In particular, functions from $\SpPsi_0$, the space associated to $\vpt=0$, satisfy the corresponding homogeneous boundary conditions.

It is essential for deriving a decoupled formulation that $\VRegDecomp$ is a direct sum of two subspaces, which we will show next.
For this we construct, for each $\vpt \in \Q$, a specific function $\extPsi{\vpt} \in \SpPsi_\vpt$ as follows: 
Let $E$ be a fixed edge on $\Gamma_c$ with boundary $\partial E = \{x_A,x_B\}$, where $x_A$, $x_B$ are ordered in counterclockwise direction. We set
\begin{equation}\label{extensionGammaE}
	\extPsiGamma{\vpt}(x) = - \int_0^{\sigma} \vpt \, n \ ds 
	\quad \text{with} \quad x = \gamma(\sigma) 
	\quad \text{on} \ \Gamma  \setminus E.
\end{equation}
Here $(\sigma,\gamma(\sigma))$ denotes the arc length parametrization of $\Gamma$ in counterclockwise direction with $x_B = \gamma(0)$ and $x_A = \gamma(\sigma_E)$ for some $\sigma_E > 0$.
Next we extend $\vphit_\Gamma[\vpt]$ on the whole boundary $\Gamma$ by connecting its values at $\partial E$ linearly on $E$. By this $\vphit_\Gamma[\vpt]$ becomes
a continuous function on $\Gamma$ with a weak tangential derivative in $(L^2(\Gamma))^2$ satisfying
\begin{equation*}
	\pp{\extPsiGamma{\vpt}}{t} = -\vpt \, n 
	\quad \text{on} \ \Gamma  \setminus E.
\end{equation*}
Finally, let $\extPsi{\vpt} \in (H^1(\Omega))^2$ be the harmonic extension of $\extPsiGamma{\vpt}$, which is well-defined, since we obviously have $\extPsiGamma{\vpt} \in (H^\frac{1}{2}(\Gamma))^2$. 
\begin{lemma}
For each $\vpt \in Q$, let $\extPsi{q} \in \red{(H^1(\Omega))^2}$ be given as described above. Then $\extPsi{\vpt} \in \SpPsi_\vpt$ and we have
\begin{equation}\label{eq:VRegDecomp}
	\VRegDecomp = \VRegDecomp_0 \oplus \VRegDecomp_1 
	\quad \text{with} \quad
	\VRegDecomp_0 =  \{0\} \times \SpPsi_0,\quad
	\VRegDecomp_1 =  \{(\vpt, \extPsi{\vpt}) : \vpt \in \Q \}.
\end{equation}
Moreover, there is a constant $c>0$ such that
\begin{equation}\label{eq:extension_stability}
	\| \extPsi{\vpt} \|_1 \leq c \|\vpt\|_1 
	\quad \text{for all} \ q \in Q.
\end{equation}
\end{lemma}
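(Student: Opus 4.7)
The plan is to check the three claims in order. For the membership $\extPsi{\vpt} \in \SpPsi_\vpt$, I would combine the construction's built-in identity $\pp{\extPsi{\vpt}}{t} = -\vpt\,n$ on $\Gamma \setminus E$ with the fact that $E \subset \Gamma_c$. Since any $\primalt \in \Vprimal$ satisfies $\primalt = 0$ and $\pp{\primalt}{n} = 0$ on $\Gamma_c$, both the tangential and normal components of $\grad \primalt$ vanish on $E$, so
\[
  \langle \pp{\extPsi{\vpt}}{t}, \grad \primalt \rangle_\Gamma
    = \int_{\Gamma \setminus E} (-\vpt\, n) \dotprod \grad \primalt \ ds
    = -\int_{\Gamma \setminus E} \vpt \, \pp{\primalt}{n}\, ds
    = -\int_\Gamma \vpt \, \pp{\primalt}{n}\, ds,
\]
where the last step uses $\pp{\primalt}{n} = 0$ on $E$.

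For the direct-sum identity, the first observation is that $\vpt \mapsto \extPsi{\vpt}$ is linear (the primitive along $\Gamma \setminus E$, the linear interpolation on $E$, and the harmonic extension are all linear), so $\VRegDecomp_1$ is a subspace. Given $(\vpt, \vphit) \in \VRegDecomp$, the natural splitting is
\[
  (\vpt, \vphit) = (0,\, \vphit - \extPsi{\vpt}) + (\vpt,\, \extPsi{\vpt}).
\]
Both $\vphit$ and $\extPsi{\vpt}$ lie in $\SpPsi_\vpt$, so their difference lies in $\SpPsi_0$ by linearity of the coupling condition, i.e.\ the first summand is in $\VRegDecomp_0$. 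For triviality of $\VRegDecomp_0 \cap \VRegDecomp_1$, any common element has first component $\vpt = 0$, and since $\extPsiGamma{0}$ vanishes identically on $\Gamma$ by construction, its harmonic extension $\extPsi{0}$ is also $0$, giving the zero element.

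For the stability bound I would argue in two steps: continuity of the harmonic extension gives $\|\extPsi{\vpt}\|_1 \le c_H\, \|\extPsiGamma{\vpt}\|_{\frac{1}{2}, \Gamma}$, and this in turn is estimated via the continuous embedding $H^1(\Gamma) \hookrightarrow H^{1/2}(\Gamma)$. The tangential derivative of $\extPsiGamma{\vpt}$ equals $-\vpt\, n$ on $\Gamma \setminus E$ and is a constant on $E$ whose modulus is controlled by $\|\vpt\|_{0,\Gamma}$; both contributions are bounded in $L^2(\Gamma)$ by $\|\vpt\|_{0,\Gamma}$, which the trace theorem controls by $\|\vpt\|_1$. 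The $L^2(\Gamma)$ bound on $\extPsiGamma{\vpt}$ itself follows from its integral representation and the same trace estimate.

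I do not anticipate a deep obstacle. The main care required is the bookkeeping around the distinguished edge $E$: checking that the linear interpolation on $E$ matches the primitive's endpoint values at $x_A$ and $x_B$ so that $\extPsiGamma{\vpt}$ is continuous on $\Gamma$ (hence piecewise $H^1$ with matching traces at vertices, which upgrades to $\extPsiGamma{\vpt} \in H^1(\Gamma)$), and tracking that the resulting constant in the stability estimate depends only on the geometry of $\Gamma$ and on $|E|$.
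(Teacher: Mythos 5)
Your proof is correct and follows essentially the same route as the paper: the membership $\extPsi{\vpt}\in\SpPsi_\vpt$ from $\grad\primalt=0$ on $E\subset\Gamma_c$, the splitting $(\vpt,\vphit)=(0,\vphit-\extPsi{\vpt})+(\vpt,\extPsi{\vpt})$ with directness from $\extPsi{0}=0$, and the stability via $\|\extPsiGamma{\vpt}\|_{1,\Gamma}\lesssim\|\vpt\|_{0,\Gamma}$ combined with trace and inverse-trace inequalities. The only cosmetic difference is that you spell out the $H^1(\Gamma)\hookrightarrow H^{1/2}(\Gamma)$ embedding and the $L^2(\Gamma)$ bound on $\extPsiGamma{\vpt}$, which the paper compresses into ``standard trace and inverse trace inequalities.''
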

\begin{proof}
For all $\primalt \in \Vprimal$, we have
\begin{equation*}
	\langle \pp{\extPsi{\vpt}}{t} ,\grad \primalt \rangle_\Gamma = -\int_{\Gamma \setminus E} \vpt n \dotprod \grad \primalt \ ds = -\int_\Gamma \vpt \, \pp{\primalt}{n} \ ds,
\end{equation*}
since $\grad \primalt = 0$ on $E$, which shows that $\extPsi{\vpt} \in \SpPsi_\vpt$.
The decomposition \eqref{eq:VRegDecomp} can easily be derived from the representation
\[
  (\vpt,\vphit) = (\vpt,\extPsi{\vpt}) + (0,\vphit_0) 
  \quad \text{with} \quad \vphit_0 = \vphit - \extPsi{\vpt} \in \SpPsi_0.
\]
The sum is direct, since $\extPsi{q} = 0$ for $q=0$.
Finally, for showing \eqref{eq:extension_stability}, observe that
\[ 
   \extPsiGamma{q}(x) = \big( 1 - (\sigma - \sigma_E)/|E| \big) \, \extPsiGamma{q}(x_A) 
   \quad \text{with} \quad x = \gamma(\sigma) 
    \quad \text{on} \ E,
\]
\red{where in the counterclockwise parametrization of $E$ we have $x_A = \gamma(\sigma_E)$ and $x_B = \gamma(\sigma_E + |E|)$} and $|E|$ denotes the length of $E$. From this and \eqref{extensionGammaE} it easily follows that
\[
  \|\extPsiGamma{q}\|_{1,\Gamma} \le c \, \|q\|_{0,\Gamma}.
\]
Here $H^1(\Gamma)$ denotes the space of functions with weak tangential derivative with corresponding norm $\|.\|_{1, \Gamma}$, see, e.g., \cite{mcLean_2000}. The rest follows from standard trace and inverse trace inequalities.
\end{proof}

\begin{remark}
The above construction relies on the assumption that we have at least one clamped edge, which we made at the beginning of this paper. However, this construction can be extended for general mixed boundary conditions using the freedom we have in the choice of $\extPsiGamma{\vpt}$ on clamped and simply supported edges and the compatibility conditions on $F$.
\end{remark}

\begin{lemma}\label{lem:traceHomogeneousSpace}
	The space $\SpPsi_0$ is equal to the set of all functions $\vphit \in (H^1(\Omega))^2$ that satisfy the following boundary conditions:
	\begin{equation}\label{eq:bc_Psi0}
	\begin{aligned}
        \vphit \dotprod n & = c_E && \text{on each edge} \  E \subset \Gamma_s, \\
        \vphit & = r_C && \text{on each connected component $C$ of} \ \Gamma_f,
	\end{aligned}
	\end{equation}
	with some constants $c_E \in \mathbb{R}$ for each edge $E \subset \Gamma_s$ and functions $r_C \in RT_0$ for each connected component $C \subset \Gamma_f$ satisfying the compatibility conditions
	$c_E = r_C(x) \dotprod n_E$, if $E \subset \Gamma_s$ is an adjacent edge to a component $C$ of $\Gamma_f$, where $x$ is the enclosed corner point and $n_E$ is the normal on $E$. Moreover, for each set of constants $c_E \in \mathbb{R}$ and functions $r_C \in RT_0$ that satisfy the compatibility conditions, there is a function $\vphit \in (H^1(\Omega))^2$, for which the boundary conditions \eqref{eq:bc_Psi0} hold.
\end{lemma}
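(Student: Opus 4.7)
The plan is to read the defining condition $\langle \pp{\vphit}{t}, \grad v\rangle_\Gamma = 0$ for all $v \in \Vprimal$ edge by edge: on each straight edge $E$ with constant unit normal $n$ and tangent $t$, the corresponding integrand decomposes as $\pp{(\vphit \dotprod n)}{t} \, \pp{v}{n} + \pp{(\vphit \dotprod t)}{t} \, \pp{v}{t}$. On $\Gamma_c$ both factors $\pp{v}{n}$ and $\pp{v}{t}$ vanish, on $\Gamma_s$ only $\pp{v}{t}$ vanishes, while on $\Gamma_f$ both can be prescribed essentially freely, coupled only by the continuity of $v|_\Gamma$ at vertices.

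For the necessity direction, I would first localize. On each edge $E \subset \Gamma_s$, testing with $v \in \Vprimal$ whose $\pp{v}{n}$ is an arbitrary smooth function compactly supported in the interior of $E$ immediately yields $\pp{(\vphit \dotprod n)}{t} = 0$ on $E$, so $\vphit \dotprod n$ is constant. Analogously, on each edge $E \subset \Gamma_f$ one can additionally vary $v|_E$ independently of $\pp{v}{n}$ on $E$, which forces also $\pp{(\vphit \dotprod t)}{t} = a_E$ to be constant, i.e.\ $\vphit$ is affine along $E$ with some tangential slope $a_E$. The main obstacle is the global step on each connected component $C = E_1 \cup \ldots \cup E_K$ of $\Gamma_f$: with the local information inserted, the pairing against a general $v \in \Vprimal$ collapses to $\sum_{E \subset \Gamma_f} a_E [v]_E$; regrouping by corners and using that $v$ vanishes at the endpoints of every $\Gamma_f$-component (which lie in $\Gamma_c \cup \Gamma_s$), this becomes $\sum_x (a_{E^-(x)} - a_{E^+(x)}) \, v(x)$ summed over the interior corners $x$ of all $\Gamma_f$-components, where $E^-(x)$ and $E^+(x)$ are the two adjacent edges. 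Since $H^2(\Omega) \hookrightarrow C(\overline{\Omega})$ in two dimensions and $\Vprimal$ contains smooth functions attaining arbitrary prescribed values at these interior corners, each coefficient must vanish, so all slopes in $C$ coincide with one constant $a_C$. Together with the continuity of $\vphit|_\Gamma$ at the interior corners of $C$ (automatic since piecewise-affine traces belong to $H^{1/2}(\Gamma)^2$ only if continuous across corners), this forces $\vphit|_C$ to be the restriction of an affine map $r_C(x) = a_C \, x + b_C \in \text{RT}_0$. The compatibility $c_E = r_C(x) \dotprod n_E$ at a corner $x$ shared by $E \subset \Gamma_s$ and $C \subset \Gamma_f$ then follows from the same continuity of $\vphit|_\Gamma$ at $x$.

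The sufficiency direction is a direct verification using the same edge-wise expression: the integrand is zero on $\Gamma_c$ because $\grad v = 0$, on every $\Gamma_s$-edge because both factors in each summand vanish, and on each $\Gamma_f$-component the contribution telescopes to $a_C \, (v(x_{K+1}) - v(x_1)) = 0$ since these endpoints lie in $\Gamma_c \cup \Gamma_s$. For the concluding existence claim, given compatible data $(c_E, r_C)$, I would construct $\vphit|_\Gamma \in (H^{1/2}(\Gamma))^2$ piecewise: prescribe $\vphit = r_C$ on each $\Gamma_f$-component, on each $\Gamma_s$-edge $E$ set $\vphit \dotprod n_E = c_E$ with $\vphit \dotprod t_E$ taken as the linear interpolation of the corner values dictated by adjacent $\Gamma_f$-components (or freely chosen otherwise), and on each $\Gamma_c$-edge any continuous piecewise-smooth interpolation of the prescribed corner values. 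The compatibility conditions guarantee that $\vphit|_\Gamma$ is continuous and piecewise smooth at every corner, hence lies in $(H^{1/2}(\Gamma))^2$, so a standard extension (for instance, the harmonic one) then produces the required $\vphit \in (H^1(\Omega))^2$.
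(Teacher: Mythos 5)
Your proof follows essentially the same route as the paper's: localize the coupling condition edge by edge to extract that $\vphit\dotprod n$ is constant on each $\Gamma_s$- and $\Gamma_f$-edge and that $\pp{(\vphit\dotprod t)}{t}$ is edgewise constant on $\Gamma_f$, then combine corner testing with the continuity of the $H^{1/2}$-trace to get the component-wise affine structure on $\Gamma_f$ together with the compatibility conditions, and construct the reverse inclusion by harmonically extending a suitable piecewise-affine boundary trace. The one place where you are noticeably less careful than the paper is the bookkeeping of the corner contributions when passing from edgewise constancy to the telescoping sum: the paper first converts the distributional pairing $\langle \pp{\vphit}{t},\grad \primalt\rangle_\Gamma$ into the genuine $L^2$-integrals of \eqref{eq:ibp_coupling_cond} via a bulk integration by parts, then makes the corner boundary terms explicit in \eqref{int-by-part} and disposes of those at corners interior to $\Gamma_s$ by noting $\pp{\primalt}{n}(x)=0$ there (a consequence of $\primalt$ vanishing on two non-collinear edges, i.e.\ the maximality assumption), whereas your direct edgewise splitting of $\pp{\vphit}{t}\dotprod\grad \primalt$ into $\pp{(\vphit\dotprod n)}{t}\,\pp{\primalt}{n}+\pp{(\vphit\dotprod t)}{t}\,\pp{\primalt}{t}$ leaves these corner terms implicit and unjustified, even though they do in fact vanish.
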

\begin{proof}
	For $\vphit \in (H^1(\Omega))^2$ and $\primalt \in C^\infty(\overline\Omega) \cap \Vprimal$ we have 
    \begin{equation}\label{eq:ibp_coupling_cond}
	\begin{aligned}
		\langle \pp{\vphit}{t} ,\grad \primalt \rangle_\Gamma & = \int_\Omega \bCurl \vphit : \grad^2 \primalt \ dx = - 
		\int_\Gamma \vphit \dotprod (\grad^2  \primalt) \, t \ ds \\
		& = -\sum_{E \subset  \Gamma_s} \int_E (\vphit \dotprod n) \ \pp{(\pp{\primalt}{n})}{t} \ ds -
		\sum_{E \subset  \Gamma_f} \int_E \vphit \dotprod \pp{(\grad \primalt)}{t} \ ds
    \end{aligned}
	\end{equation}
	using integration by parts and the boundary conditions for $\primalt$.
    Now let $\vphit \in \SpPsi_0$
	and let $E$ be an edge from $\Gamma_s \cup \Gamma_f$. 
	For each function $\varphi \in C_0^\infty(E)$, one can easily construct a function $\primalt \in C_0^\infty(\Omega\cup E)$ with $\primalt = 0$ and $\pp{\primalt}{n}=\varphi$ on $E$. 
	Since $\vphit \in \SpPsi_0$, we have $\langle \pp{\vphit}{t} ,\grad \primalt \rangle_\Gamma = 0$, which reduces to
	\[
	  \int_E (\vphit \dotprod n) \ \pp{\varphi}{t} \ ds = 0 
	  \quad \text{for all} \ \varphi \in C_0^\infty(E) 
	\]
	by using \eqref{eq:ibp_coupling_cond}.
	Hence, $\vphit\dotprod n $ is equal to a constant $c_E$ on $E$. 
	By a similar argument it follows that $\pp{(\vphit \dotprod t)}{t}$ is equal to a constant $a_E$ on each edge $E\in\Gamma_f$.
	
	Using that $\vphit\dotprod n$ is edgewise  constant on $\red{\Gamma_s \cup \Gamma_f}$ and $\pp{(\vphit \dotprod t)}{t}$ is edgewise constant on $\Gamma_f$, we obtain from (\ref{eq:ibp_coupling_cond}) by edgewise integration by parts:
	\begin{equation}\label{int-by-part}
		\langle \pp{\vphit}{t} ,\grad \primalt \rangle_\Gamma 
		 =  \red{-}\sum_{E \subset  \Gamma_s} \big( (\vphit \dotprod n) \ \pp{\primalt}{n}\big)\big|_{\partial E} \red{-} \sum_{E \subset \Gamma_f} \big( \vphit \dotprod \grad \primalt - \pp{(\vphit\dotprod t)}{t}  \,\primalt \big)\big|_{\partial E} .
	\end{equation}
	Now let $E$ and $E'$ be two adjacent edges from $\Gamma_f$ with the common corner point $x$ and let  $v \in C_0^\infty(\Omega \cup E \cup E' \cup \{x\})$. Then the condition $\langle \pp{\vphit}{t} ,\grad \primalt \rangle_\Gamma = 0$ reduces to
	\[
       0 = \vertexJump{ \vphit \dotprod \grad \primalt - \pp{(\vphit\dotprod t)}{t}  \,\primalt}_x
         = \vertexJump{ \vphit}_x \dotprod \grad \primalt(x) - \vertexJump{\pp{(\vphit\dotprod t)}{t}}_x  \,\primalt(x).
	\]
	Observe that $\vphit \in P_1$ on each edge of $\Gamma_f$ and is the trace of an $H^1$-function. Therefore, $\vphit$ must be continuous on $\Gamma_f$, which implies $\vertexJump{ \vphit}_x = 0$. Since $v(x)$ can be chosen arbitrarily, it follows that $\vertexJump{\pp{(\vphit\dotprod t)}{t}}_x = 0$. So $\pp{(\vphit\dotprod t)}{t}$ is not only constant on each edge $E$ of $\Gamma_f$ but it is equal to the same constant $a_C$ on each edge of a connected component $C$ of $\Gamma_f$.
	Since we additionally know from above that $\vphit \dotprod n$ is constant on such an edge, it easily follows that $\pp{\widetilde\vphit}{t} = 0$ on each edge of $C$ with $\widetilde\vphit(x) = \vphit(x) - a_C x$. Since $\vphit$ is continuous on $\Gamma$, $\widetilde\vphit$ is continuous, too. Then
	it follows that $\widetilde\vphit$ is equal to a common constant $b_C \in \mathbb{R}^2$ on $C$. Hence, $\vphit(x) = a_C x + b_C$ on $C$.
	
	Let $E \subset  \Gamma_s$ be an edge adjacent to a connected component $C$ of $\Gamma_f$ with enclosed corner point $x$. By a similar argument as above one can deduce from $\langle \pp{\vphit}{t} ,\grad \primalt \rangle_\Gamma = 0$ and \eqref{int-by-part} the compatibility condition $c_E = r_C(x) \dotprod n_E$.
	
	For proving the reverse direction, let $\vphit \in (H^1(\Omega))^2$ satisfy the boundary conditions \eqref{eq:bc_Psi0}. Using \eqref{int-by-part} we have
	\begin{equation*}
		\langle \pp{\vphit}{t} ,\grad \primalt \rangle_\Gamma = 
		\sum_{E \subset  \Gamma_s} \big( (\vphit \dotprod n) \ \pp{\primalt}{n}\big)\big|_{\partial E} + \sum_{E \subset \Gamma_f} \big( \vphit \dotprod \grad \primalt - \pp{(\vphit\dotprod t)}{t}  \,\primalt \big)\big|_{\partial E} = 0,
	\end{equation*}
	for all $\primalt \in C^\infty(\overline\Omega)\cap W$ for the following reasons: $\pp{\primalt}{n}(x)=0$ on all interior corner points of $\Gamma_s$, $\vphit \dotprod\grad \primalt - \pp{(\vphit\dotprod t)}{t} \primalt$ is continuous on $ \Gamma_f$, $v = 0$ and $\grad v = 0$ on corner points on the interface of $\Gamma_s$ or $\Gamma_f$ with $\Gamma_c$, and the compatibility conditions on corner points on the interface of $\Gamma_s$ and $\Gamma_f$. So it follows that $\vphit \in \SpPsi_0$.
	
	\red{Note, $\pp{\primalt}{n}(x)=0$ on all interior corner points of $\Gamma_s$, since $\primalt = 0$ on $\Gamma_s$. This makes the tangential derivative vanish at this corner points for two linear independent tangential directions. Therefore, we obtain $\grad \primalt(x) = 0$ and $\pp{\primalt}{n}(x)=0$}

	For the last part, let a set of data $c_E \in \mathbb{R}$ for each edge $E \subset  \Gamma_s$ and $r_C \in RT_0$ for each connected component $C \subset \Gamma_f$ be given, which satisfy the compatibility conditions. We will first construct a continuous function $\hat\vphit_\Gamma$ on $\Gamma$ with $\vphit_\Gamma|_E \in P_1$ for each edge $E \in \GammaEdges$, where $P_1$ is the set of polynomials  of degree $\le 1$, by prescribing its values on all corner points $x \in \GammaVertices$ as follows:
	\begin{equation*}
	   \hat\vphit_\Gamma(x) = 
	     \begin{cases}
	       r_C(x) & \quad \text{if $x$ is a corner points of the connected component} \ C \subset \Gamma_f, \\
	       \vphit_s(x) & \quad \text{if $x$ is an interior corner point of} \ \Gamma_s, \\
	       \vphit_{sc}(x) & \quad \text{if $x$ is a corner point on the interface of} \ \Gamma_s \ \text{and} \ \Gamma_c, \\
	       0 & \quad \text{if $x$ is an interior corner point of} \ \Gamma_c.
	     \end{cases}
	\end{equation*}
	Here $\vphit_s(x)$ and $\vphit_{cs}(x)$ are defined as follows: 
	For a common corner point $x$ of $E, E' \subset \Gamma_s$, $\vphit_s(x)$ is uniquely given by
	\[
	  \vphit_s(x) \dotprod n_E = c_E 
	  \quad \text{and} \quad
	  \vphit_s(x) \dotprod n_{E'} = c_{E'} .	  
	\]
	For a common corner point $x$ of $E \subset \Gamma_s$ and $E' \subset \Gamma_c$, $\vphit_{sc}(x)$ is uniquely given by
	\[
	  \vphit_{sc}(x) \dotprod n_E = c_E 
	  \quad \text{and} \quad
	  \vphit_{sc}(x) \dotprod t_E = 0 .	  
	\]
	Let  $\hat\vphit \in (H^1(\Omega))^2$ be the harmonic extension of $\hat\vphit_\Gamma$. It is easy to verify that the boundary conditions \eqref{eq:bc_Psi0} are satisfied.
\end{proof}

In order to eliminate $c_E$ and $r_C$ in \eqref{eq:bc_Psi0} we introduce the following Cl\'{e}ment-type projection operator on $\Gamma$.

\begin{definition} \label{Clement}
Let $\vphit_\Gamma \in L^2(\Gamma)$. Then the projection $\Pi_\Gamma \colon L^2(\Gamma) \rightarrow L^2(\Gamma)$ is given by $\hat\vphit_\Gamma = \Pi_\Gamma \vphit_\Gamma$, where $\hat{\vphit}_\Gamma$ is constructed as in the last part of the proof of \lemref{lem:traceHomogeneousSpace} from the data $c_E(\vphit_\Gamma)$ and $r_C(\vphit_\Gamma)$, which are the $L^2$-projection of $\vphit_\Gamma \dotprod n_E$ onto the set $P_0$ of constant functions and the $L^2$-projection of $\vphit_\Gamma|_C$ onto $RT_0$, respectively.
\red{Except if $E \subset \Gamma_s$ is an adjacent edge to a component $C$ of $\Gamma_f$, then $c_E(\vphit_\Gamma) = r_C(\vphit_\Gamma)(x) \dotprod n_E$, where $x$ is the enclosed corner point and $n_E$ is the normal on $E$, in order to enforce the compatibility conditions.}
\end{definition}

With this notation the boundary conditions \eqref{eq:bc_Psi0} can be rewritten as
	\begin{equation}\label{eq:bc_Psi0_new}
	    (\filter\vphit) \dotprod n = 0 \quad \text{on} \  \Gamma_s, \qquad
        \filter\vphit = 0 \quad \text{on} \ \Gamma_f,
	\end{equation}
with $\filter\vphit = (I - \Pi_\Gamma) \vphit$.

\subsection{Decoupled formulation}
\label{sec:decoupled_formulation}

Using the representations
\begin{equation*}
	\aux = \vp \Id + \symbCurl \vphi,\quad \auxt = \vpt \Id + \symbCurl \vphit
\end{equation*}
together with \eqref{eq:VRegDecomp} leads to the following equivalent formulation of \eqref{eq:newMixed_formulation}: 
Find $\vp \in \Q$, $\vphi \in \SpPsi_\vp = \extPsi{\vp} + \SpPsi_0$ and $\primal \in \Q$ such that
\begin{alignat*}{4}
	&(\vp \Id + \symbCurl \vphi, \vpt \Id + \symbCurl \extPsi{\vpt})_{\stiffnessC^{-1}}  &&- (\grad \primal, \grad \vpt)  &&= 0\\
	&(\vp \Id + \symbCurl \vphi, \symbCurl \vphit_0)_{\stiffnessC^{-1}}  && &&= 0\\
	&-(\grad \vp, \grad \primalt) && && = -\langle F, \primalt \rangle.
\end{alignat*}
for all $\vpt \in \Q$, $\vphit_0 \in \SpPsi_0$ and $\primalt \in \Q$. \red{For the representations of $\Opdivdiv \aux$ and $\Opdivdiv \auxt$ recall \thmref{thm:regular_decomposition}, in particular for $\aux$ we use identity \eqref{PoissonNq} and for $\auxt$ we rely on the reverse direction.}
Therefore, the mixed formulation of the Kirchhoff plate bending problem is equivalent to three (consecutively to solve) elliptic second-order problems:
\begin{enumerate}
\item
The $\vp$-problem: Find $p \in \Q$ such that
\begin{equation}\label{eq:p_problem}
	(\grad \vp, \grad \primalt) = \langle F, \primalt \rangle
	\quad \text{for all} \ \primalt \in \Q .
\end{equation}
\item
The $\vphi$-problem: For given $\vp \in \Q$, find  $\vphi \in \SpPsi_\vp = \extPsi{\vp} + \SpPsi_0$ such that
\begin{equation}\label{eq:phi_problem}
   (\symbCurl \vphi, \symbCurl \vphit_0)_{\stiffnessC^{-1}}  = 
   - (\vp \Id, \symbCurl \vphit_0)_{\stiffnessC^{-1}} 
   \quad \text{for all} \ \vphit_0 \in \SpPsi_0 .
\end{equation}

\item
The $\primal$-problem: For given $\aux = \vp \Id + \symbCurl \vphi$, find $\primal \in \Q$ such that
\begin{equation}\label{eq:w_problem}
	(\grad \primal, \grad \vpt) 
	  = (\aux, \vpt \Id + \symbCurl \extPsi{\vpt} )_{\stiffnessC^{-1}}
    \quad \text{for all} \ \vpt \in \Q.
\end{equation}
\end{enumerate}

\begin{remark}
	For the rotated function 
	$\vphiRot = (-\vphi_2,\vphi_1)^T$ 
	the $\vphi$-problem becomes a linear elasticity problem 
	\begin{equation*}
		(\varepsilon (\vphiRot), \varepsilon (\vphitRot_0))_{\hat\stiffnessC^{-1}}  = - (\vp \Id, \varepsilon (\vphitRot_0))_{\hat\stiffnessC^{-1}}
	\end{equation*}
	with a appropriately rotated material tensor $\hat \stiffnessC^{-1}$. 
\end{remark}

\begin{remark}
	If we only have clamped and simply supported boundary parts, we know from \sectref{sec:coupling_condition} that $\extPsi{\vpt} = 0$ on $\Omega$. Therefore, the terms involving $\extPsi{\vpt}$ just vanish. Furthermore, in the purely clamped case $\SpPsi_\vp$ and $\SpPsi_0$ become $(H^1(\Omega))^2$.
\end{remark}

\begin{remark}
\begin{enumerate}
\item
The $\vp$-problem and the $\primal$-problem are standard Poisson problems with mixed boundary conditions on $\Gamma_c\cup\Gamma_s$ and $\Gamma_f$.
\item
\red{
From \eqref{eq:phi_problem} we obtain
\begin{equation}\label{eq:RotM}
	\brot (\stiffnessC^{-1}\aux) = 0 \quad \text{in} \ \red{L^2(\Omega)},
\end{equation}
therefore, $\stiffnessC^{-1}\aux \in \HRot = \{\auxt \in \Ltwob: \brot \auxt \in (L^2(\Omega))^2 \}$.
With the representation $\aux = \vp\Id + \symbCurl \vphi$, using $\vp \in \Q \subset H^1(\Omega)$,} we receive that
the solution of the $\vphi$-problem satisfies the second-order differential equation
\begin{equation}\label{eq:RotsymCurl}
	\brot \left(\stiffnessC^{-1} \symbCurl \vphi\right) = - \brot \left(\stiffnessC^{-1} (\vp \Id)\right) 
	\quad \text{in} \ \red{L^2(\Omega)} 
\end{equation}
the (essential) boundary conditions (see \eqref{eq:bc_Psi0_new})
\begin{equation}\label{eq:essential_bc_phi}
        (\filter{\red{\vphi}}) \dotprod n = (\filter{\extPsiGamma{\vp}}) \dotprod n \quad \text{on} \  \Gamma_s, \qquad
        \filter {\red{\vphi}}  = \filter{\extPsiGamma{\vp}} \quad \text{on} \ \Gamma_f,
	\end{equation}
and the following condition for the flux $\chi = \left(\stiffnessC^{-1} \symbCurl \vphi\right) t$:
\begin{equation}\label{eq:chi_ibp_boundary}
  \langle \chi,\vphit\rangle_\Gamma = -\big( (\stiffnessC^{-1} p \, \Id)t,\vphit \big)_\Gamma \quad \text{for all} \ \vphit \in \SpPsi_0.
\end{equation}
If $\chi \in L^2(\Gamma)$, \red{then \eqref{eq:chi_ibp_boundary} is equivalent to} the following (natural) boundary conditions
\begin{equation}\label{eq:natural_bc_phi}
	\chi = 0 \quad \text{on} \ \Gamma_c, 
	\qquad
	\chi \cdot t = 0 \quad \text{on} \ \Gamma_s ,
\end{equation}
\red{
and
\begin{equation}\label{eq:chi_compatible}
	( \chi \dotprod n, \vphit \dotprod n )_{\Gamma_s} + ( \chi, \vphit)_{\Gamma_f} = -((\stiffnessC^{-1} \vp \Id)t, \vphit)_{\Gamma_f} \quad \text{for all} \ \vphit \in \SpPsi_0,
\end{equation}
where $p = 0$ on $\Gamma_c\cup \Gamma_s$ is used (see the $\vp$-problem). 
}

\item
For the rotated function $\vphi^\perp$, \eqref{eq:RotsymCurl} is a linear elasticity problem
\begin{equation*}
	- \bdiv \big(\hat\stiffnessC^{-1} \symbGrad (\vphiRot)\big) = \bdiv \big(\hat\stiffnessC^{-1} (\vp \Id) \big) \quad \text{in} \ \Omega
\end{equation*}
with the corresponding boundary conditions for $\vphiRot$ and $\big(\hat\stiffnessC^{-1} \symbGrad (\vphiRot)\big) n$, which can be interpreted as displacement and traction.
\end{enumerate}
\end{remark}

\begin{remark}
	\red{
	So far we have only considered homogeneous boundary conditions. In the following we indicate how to adapt the decoupled formulation introduced above to inhomogeneous boundary conditions of the form (cf. \cite{reddy_2007, harari_sokolov_krylov_2011}):
	\begin{equation*}
	\begin{alignedat}{4} 
		&\primal = \hat\primal, \quad &&\pp{\primal}{n}= \hat\theta & \quad \text{on} \ \Gamma_c, \\
		&\primal = \hat\primal, \quad &&\aux_{nn} = \hat\aux_{nn} & \quad \text{on} \ \Gamma_s, \\
		&\aux_{nn} = \hat\aux_{nn}, \quad &&\pp{\aux_{nt}}{t} + \bdiv \aux \dotprod n = \hat V_n & \quad \text{on} \ \Gamma_f,
	\end{alignedat}
	\end{equation*}
	and the corner forces
	\begin{equation*}
		\vertexJump{\aux_{nt}}_x = \hat R_x  \quad \text{for all} \ x \in \GammaVertices[f].
	\end{equation*}
	\begin{enumerate}
		\item In the $\vp$-problem \eqref{eq:p_problem} the additional contribution on the right-hand side is given as
		\begin{equation*}
			\int_{\Gamma_f} \hat V_n \primalt \ ds.
		\end{equation*}
		\item In the $\vphi$-problem \eqref{eq:phi_problem} the boundary value $\extPsiGamma{\vp}$ needed in the construction of $\extPsi{\vp}$ has to be adapted.
		With the same notations as used for  \eqref{extensionGammaE} we set
		\begin{equation*}
			\extPsiGamma{\vp}(x) = \int_0^{\sigma} (-\vp \, n + \hat\aux_{nn} \, n + c \, t) \ ds 
			\quad \text{with} \quad x = \gamma(\sigma)
			\quad \text{on} \ \Gamma  \setminus E,
		\end{equation*}
		  where  $c$ is edgewise constant, given by
		  \begin{equation*}
			c|_{E_k} = \sum_{i=1}^{k-1} \hat R_{x_i}, \quad \text{for all $k=1,\dots,K$}.
		\end{equation*}
		Here, the edges $E_k$ for $k=1,\dots,K$ are numbered consecutively in counterclockwise direction with $E_1$ starting from $x_B = \gamma(0)$. Furthermore, we denote the vertex at the end point of $\overline E_k$ by $x_k$ and use the convention $\hat R_{x_k} = 0$ for corner points $x_k \not\in \GammaVertices[f]$.
		
		As additional contribution on the right-hand side of the $\vphi$-problem \eqref{eq:phi_problem} we obtain
		\begin{equation*}
			-\int_{\Gamma_c} \pp{\vphit}{t} \dotprod n \ \hat\theta + \pp{\vphit}{t} \dotprod t \ \pp{\hat\primal}{t} \ ds - \int_{\Gamma_s} \pp{\vphit}{t} \dotprod t \ \pp{\hat\primal}{t} \ ds,
		\end{equation*}
		provided $\pp{\vphit}{t} \in (L^2(\Omega))^2$.
		\item In the $\primal$-problem \eqref{eq:w_problem} we get as additional contribution $-(\grad \overline \primal, \grad \vpt)$ on the right-hand side, where $\overline \primal \in H^1(\Omega)$ is any extension of the Dirichlet data $\hat\primal$.
	\end{enumerate}
	}
\end{remark}


\section{The discretization method}
\label{sec:conforming_method}

Let $(\decomposition)_{h \in \mathcal{H}}$ be a \red{shape-regular} family of subdivision\red{s} of the domain $\Omega$ into polygonal elements. 
The diameter of an element $T \in \decomposition$ is denoted by $h_T$ and we define $h= \max \{ h_T : T \in \decomposition \}$.
We denote the set of all edges $e$ of elements $T \in \decomposition$ with $e \subset \Gamma_s$ by $\edges[s]$ and with $e \subset \Gamma_f$ by $\edges[f]$. 
The length of an edge $e$ is denoted by $h_e$.
Moreover, we assume that the total number of edges in $\edges[s] \cup \edges[f]$ is bounded by $c \, h^{-1}$.
Here and in the sequel $c$ denotes a generic constant independent of $h$, possibly different at each occurrence. \red{Note, this assumption is weaker than requiring a quasiuniform family of subdivisions. It can be viewed as a quasiuniformity condition in a neighborhood of the boundary.}

\begin{remark}
Observe that the symbol $e$ is used to indicate element edges, while the symbol $E$ used in the preceding sections is reserved for edges of the domain as introduced at the beginning of Section \ref{sec:kirchhoff_love_plate}.
\end{remark}

Let $\FE_h$ be a finite dimensional subspace of $H^1(\Omega)$ of piecewise polynomials with degree $k$ associated with $\decomposition$ and we set $\FE_{h,0} = \FE_h \cap H^1_{0,\Gamma_c \cup \Gamma_s}(\Omega)$.
For $\primalt \in H^s(\Omega)$, with $1\leq s \leq k+1$, we assume the standard approximation property
\begin{equation}\label{eq:approximation_property}
	\tag{A1}
	\inf_{\primalt_h \in \FE_h} \|\primalt - \primalt_h \|_l \leq c \ h^{s-l} \|\primalt\|_s, 
\end{equation}
for all $l \in \{0, \dots, s\}$. Moreover, we require the discrete trace inequality
\begin{equation}\label{eq:discrete_trace_inequality}
	\tag{A2}
	\|\primalt_h\|_{0,e} \leq c \, h_T^{-\frac{1}{2}} \|\primalt_h\|_{0,T} \quad \text{for all} \ \primalt_h \in \FE_h, \ e \subset \partial T, \ T \in \decomposition,
\end{equation}
and the continuous trace inequality
\begin{equation}\label{eq:continuous_trace_inequality}
	\tag{A3}
	\|\primalt\|_{0,e} \leq c \, h_T^{-\frac{1}{2}} (\|\primalt\|_{0,T} + h_T \|\grad \primalt \|_{0,T}) \quad \text{for all} \ \primalt \in H^1(T), \ e\subset\partial T, \ T \in \decomposition.
\end{equation}
Here and in the following the $L^2$-norm on an element $T$ and an edge $e$ are denoted by $\|.\|_{0,T}$ and $\|.\|_{0,e}$, respectively.
The properties (A1), (A2), (A3) are satisfied for standard finite element spaces or isogeometric B-spline discretization spaces under standard assumptions. 

The method we propose 
consists of three consecutive steps:
\begin{enumerate}
	\item The discrete $\vp$-problem: Find $\vp_h \in \FE_{h,0}$ such that
	\begin{equation}\label{eq:discrete_p_problem}
		(\grad \vp_h, \grad \primalt_h) = \langle F, \primalt_h \rangle 
	    \quad \text{for all} \ \primalt_h \in \FE_{h,0}.
	\end{equation}
	\item The discrete $\vphi$-problem: 
	For given $\vp_h \in \FE_{h,0}$, find $\vphi_h \in (\FE_h)^2/RT_0$ such that
	\begin{equation}\label{eq:discrete_phi_problem}
		a_{\vphi,h}(\vphi_h, \vphit_h) = \langle F_{\vphi,h}, \vphit_h \rangle
        	\quad \text{for all} \ \vphit_h \in (\FE_h)^2/RT_0, 
   	\end{equation}
	with
	\begin{align*}
		a_{\vphi,h}(\vphi, \vphit) &= (\symbCurl \vphi, \symbCurl \vphit)_{\stiffnessC^{-1}}+ s(\vphi, \vphit)
		 + s(\vphit, \vphi)  + r_h(\vphi, \vphit), \\
		\langle F_{\vphi,h}, \vphit \rangle &= -(\vp_h \Id, \symbCurl \vphit)_{\stiffnessC^{-1}}
		  - c(\vp_h, \vphit)  + s(\vphit, \extPsiGamma{\vp_h}) + r_h(\extPsiGamma{\vp_h}, \vphit),
        \end{align*}
	where
	\begin{align*}
		s(\vphi, \vphit) 
		& = \left(  \chi \dotprod n , \filter\vphit \dotprod n \right)_{\Gamma_s} 
		  + \left( \chi , \filter\vphit \right)_{\Gamma_f}
		  \quad \text{with} \quad \chi = (\stiffnessC^{-1} \symbCurl \vphi)  t, \\
		c(\vpt,\vphit) & = \big((\stiffnessC^{-1} \vpt\Id) t, \filter\vphit \big)_\Gamma,\\
		r_h(\vphi, \vphit) & = \sum_{e\in \edges[s]} \frac{\eta}{h_e} 
		\left( \filter\vphi \dotprod n ,  \filter\vphit \dotprod n \right)_e + \sum_{e\in\edges[f]} \frac{\eta}{h_e} \left( \filter\vphi , \filter\vphit \right)_e,
	\end{align*}
	for some penalty parameter $\eta > 0$. 
	\item The discrete $\primal$-problem:
	For given $\aux_h = \vp_h \Id + \symbCurl \vphi_h$, find $\primal_h \in \FE_{h,0}$ such that
	\begin{equation}\label{eq:discrete_w_problem}
		(\grad \primal_h, \grad \vpt_h) = \langle F_{\primal,h}, \vpt_h \rangle
		\quad \text{for all} \ \vpt_h \in \FE_{h,0}, 
	\end{equation}
	with
	\begin{equation}\label{eq:rhs_discrete_w_problem}
	\begin{aligned}
		\langle F_{\primal,h}, \vpt \rangle 
		&= (\aux_h, \vpt \Id)_{\stiffnessC^{-1}} 
		  - s(\vphi_h, \extPsiGamma{\vpt}) - c(\vp_h, \extPsiGamma{\vpt})\\
		  & \quad {} - r_h(\vphi_h- \extPsiGamma{\vp_h}, \extPsiGamma{\vpt}).
	\end{aligned}
	\end{equation}
\end{enumerate}

\begin{remark}\label{rem:consistency}
\begin{enumerate}
\item
The discrete $\vp$-problem is the standard Galerkin method applied to \eqref{eq:p_problem}.
\item
The discrete $\vphi$-problem is a Nitsche method applied to \eqref{eq:phi_problem}, which is derived as follows.
We start from the identity
	\begin{equation}\label{eq:Nitsche_identity} 
	\begin{aligned}
		&(\aux, \symbCurl \vphit)_{\stiffnessC^{-1}}
		+ \langle (\stiffnessC^{-1}\aux) t, \vphit \rangle_\Gamma = 0
		\quad \text{for all} \ \vphit \in (H^1(\Omega))^2,
	\end{aligned}
	\end{equation}
which follows \red{from (\ref{eq:RotM}) by multiplying with a test function and} using integration by parts.
\red{
Plugging in the second term of \eqref{eq:Nitsche_identity} the representation of $\aux$ leads to}
\begin{equation}
	\langle (\stiffnessC^{-1}\aux) t, \vphit \rangle_\Gamma = \langle \chi, \vphit \rangle_\Gamma + \big((\stiffnessC^{-1} \vp\Id) t, \vphit \big)_\Gamma
\end{equation}
with $\chi = (\stiffnessC^{-1} \symbCurl \vphi) t$.
\red{For the next step note that $\vphit - \filter \vphit = \Pi_\Gamma \vphit$ on $\Gamma$ and according to \lemref{lem:traceHomogeneousSpace} there exists a $\vphit_0 \in \SpPsi_0$ such that $\vphit_0 = \Pi_\Gamma \vphit$ on $\Gamma$. Then the natural boundary conditions \eqref{eq:natural_bc_phi} and \eqref{eq:chi_compatible} (with $\vphit$ replaced by $\vphit_0$) lead to}
\begin{equation}\label{eq:Nitsche_identity_bc}
\begin{aligned}
	\langle (\stiffnessC^{-1}\aux) t, \vphit \rangle_\Gamma &= \left(  \chi \dotprod n , \filter\vphit \dotprod n \right)_{\Gamma_s} + \left( \chi , \filter\vphit \right)_{\Gamma_f} + \big((\stiffnessC^{-1} \vp\Id) t, \filter\vphit \big)_\Gamma\\
	&= s(\vphi,\vphit) + c(\vp, \vphit),
\end{aligned}
\end{equation}
provided $\chi \in L^2(\Gamma)$.
Then the method is obtained by first extending \eqref{eq:Nitsche_identity} by the terms $s(\vphit,\vphi - \extPsiGamma{\vp})$ and $r_h(\vphi - \extPsiGamma{\vp},\vphit)$, which vanish for the exact solution (see \eqref{eq:essential_bc_phi}), and then by replacing $\vp$, $\vphi$, and $\vphit$ with $\vp_h$, $\vphi_h$, and $\vphit_h$.   
Following \cite{diPietro_ern_2012} we call $s(\vphi,\vphit) + c(\vp,\vphit)$,
$s(\vphit,\vphi)$, and $r_h(\vphi,\vphit)$ the consistency, symmetry and penalty terms, respectively.
\item
The discrete $\primal$-problem is the standard Galerkin method applied to \eqref{eq:w_problem}, where the right-hand side is reformulated as follows. By using \eqref{eq:Nitsche_identity} and \eqref{eq:Nitsche_identity_bc}  with $\vphit = \extPsi{\vpt}$ we obtain for the right-hand side of (\ref{eq:w_problem}):
\begin{equation}\label{eq:rhs_w_problem}
\begin{aligned}
  & (\aux, \vpt \Id + \symbCurl \extPsi{\vpt} )_{\stiffnessC^{-1}} \\
  & \quad = (\aux, \vpt \Id)_{\stiffnessC^{-1}} 
       - s(\vphi, {\extPsiGamma{\vpt}})- c(\vp, {\extPsiGamma{\vpt}}) - r_h(\vphi - \extPsiGamma{\vp},\extPsiGamma{\vpt}),
\end{aligned}
\end{equation}
\red{where we additionally extend by the term $r_h(\vphi - \extPsiGamma{\vp}, \extPsiGamma{\vpt})$, which vanishes for the exact solution.} Then \eqref{eq:rhs_discrete_w_problem} is obtained by replacing $\vp$, $\vphi$, and $\vpt$ with $\vp_h$, $\vphi_h$, and $\vpt_h$.
\end{enumerate}
\end{remark}

\begin{remark}
	Since only $\extPsiGamma{\vp_h}$ and $\extPsiGamma{\vpt_h}$ appear in the numerical method, the extensions of $\extPsi{\vp_h}$ and $\extPsi{\vpt_h}$ to the interior are not needed.
	Although the functions $\extPsiGamma{\vpt_h}$ do not have local support, the linear systems can still be assembled with optimal complexity.
\end{remark}

\begin{remark}
	Our decomposition of the continuous problem also leads to a new interpretation of the well-known Hellan-Herrmann-Johnson (HHJ) method; see \cite{hellan:67,herrmann:67,johnson:73}. 
	We can proceed similar as in \thmref{thm:regular_decomposition} and derive a discrete regular decomposition for the approximation space of the auxiliary variable, leading to a reformulation of the HHJ method in form of three consecutively to solve discretized second-order problems, as it has already been worked out in details in the purely clamped situation in \cite{krendl_rafetseder_zulehner_2016}.
	\red{The HHJ method is mainly restricted to triangular meshes. In \cite{scapolla_1980} a HHJ-type method on rectangular meshes is considered.} The new method introduced above is more flexible in the sense that triangular and \red{general} quadrilateral meshes can be handled and also
	isogeometric B-spline discretization spaces can be used.
\end{remark}
The main result of this section is the following a priori discretization error estimate for the proposed method.
\begin{theorem}
\label{thm:error_estimate}
	Assume that $\Omega$ is convex. Let $(\primal, \aux)$, with $\aux = \vp \Id + \symbCurl \vphi$, be the solution of the mixed formulation $\eqref{eq:newMixed_formulation}$ and  $(\primal_h, \aux_h)$, with $\aux_h = \vp_h \Id + \symbCurl \vphi_h$, be the approximate solution, given by \eqref{eq:discrete_p_problem}, \eqref{eq:discrete_phi_problem}, \eqref{eq:discrete_w_problem}. 
	For $\primal \in H^{s_\primal}(\Omega)$, $\vp \in H^{s_\vp}(\Omega)$ and $\vphi \in (H^{s_\vphi}(\Omega))^2$, with $1\leq s_\primal, s_\vp \leq k+1$ and $\frac{3}{2} < s_\vphi \leq k+1$, we have the estimate
	\begin{equation*}
		\| \aux - \aux_h\|_0 + \| \primal - \primal_h \|_1 \leq c \left( h^{s_\primal-1} \|\primal\|_{s_\primal} + h^{s_\vp-1} \|\vp\|_{s_{\vp}} + h ^{s_\vphi-1} \|\vphi\|_{s_{\vphi}} \right).
	\end{equation*}
	Especially, for $s_\vp = k+1$, $s_\phi = k+1$ and $s_\primal = k+1$ we obtain
	\begin{equation*}
		\| \aux - \aux_h\|_0 + \| \primal - \primal_h \|_1 \leq c \ h^k \left( \|\primal\|_{k+1} + \|\vp\|_{k+1} + \|\vphi\|_{k+1} \right).
	\end{equation*}
\end{theorem}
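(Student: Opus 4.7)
The plan is to bound the three consecutive contributions separately and then assemble them via the identity $\aux-\aux_h=(\vp-\vp_h)\Id+\symbCurl(\vphi-\vphi_h)$. Since the discrete $\vp$-problem \eqref{eq:discrete_p_problem} is the standard Galerkin method for the Poisson problem \eqref{eq:p_problem}, Céa's lemma together with the approximation property \eqref{eq:approximation_property} immediately yields
\[
\|\vp-\vp_h\|_1 \le c\,h^{s_\vp-1}\|\vp\|_{s_\vp},
\]
which in particular controls the $\vp\,\Id$-contribution to $\|\aux-\aux_h\|_0$.

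The main step is the $\vphi$-problem. I would introduce on $(\FE_h)^2/RT_0$ the mesh-dependent norm
\[
\|\vphit\|_{h}^2 = \|\symbCurl\vphit\|_0^2 + \sum_{e\in\edges[s]} h_e^{-1}\|\filter\vphit\dotprod n\|_{0,e}^2 + \sum_{e\in\edges[f]} h_e^{-1}\|\filter\vphit\|_{0,e}^2,
\]
together with an augmented norm $\|\cdot\|_{h,*}$ obtained by adding dual weighted edge $L^2$-norms of $(\stiffnessC^{-1}\symbCurl\vphit)\,t$ and of $\vphit$ itself on $\edges[s]\cup\edges[f]$, so that the symmetry term $s(\cdot,\cdot)$ and the consistency term $c(\vp_h,\cdot)$ become bounded in $(\|\cdot\|_{h,*},\|\cdot\|_h)$. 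For $\eta$ sufficiently large, coercivity of $a_{\vphi,h}$ on $(\FE_h)^2/RT_0$ in $\|\cdot\|_h$ follows by absorbing $s(\vphit_h,\vphit_h)$ with the discrete trace inequality \eqref{eq:discrete_trace_inequality} and then invoking a Korn-type inequality on the quotient space. By construction (see Remark~\ref{rem:consistency}) the only consistency defect comes from replacing $\vp$ by $\vp_h$ in the right-hand side of \eqref{eq:discrete_phi_problem}, so the second Strang lemma gives
\[
\|\vphi-\vphi_h\|_h \le c\Bigl(\inf_{\vphit_h\in(\FE_h)^2/RT_0}\|\vphi-\vphit_h\|_{h,*} + \|\vp-\vp_h\|_1\Bigr).
\]
The hypothesis $s_\vphi>3/2$ guarantees $\chi=(\stiffnessC^{-1}\symbCurl\vphi)\,t\in (L^2(\Gamma))^2$ so that standard interpolation in $H^1$ together with \eqref{eq:approximation_property} and the continuous trace inequality \eqref{eq:continuous_trace_inequality} bound the infimum by $c\,h^{s_\vphi-1}\|\vphi\|_{s_\vphi}$. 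In particular, this controls $\|\symbCurl(\vphi-\vphi_h)\|_0$ and, combined with Step~1, $\|\aux-\aux_h\|_0$.

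For the $\primal$-problem the discrete system \eqref{eq:discrete_w_problem} is the Galerkin method for \eqref{eq:w_problem} with the reformulated right-hand side \eqref{eq:rhs_w_problem} evaluated at $(\vp_h,\vphi_h,\aux_h)$ instead of $(\vp,\vphi,\aux)$; the additional $r_h$-term vanishes at the exact solution thanks to \eqref{eq:essential_bc_phi}. A Strang-type estimate, with the perturbed right-hand side controlled by the augmented norm from Step~2, yields
\[
\|\primal-\primal_h\|_1\le c\bigl(h^{s_\primal-1}\|\primal\|_{s_\primal}+\|\aux-\aux_h\|_0+\|\vphi-\vphi_h\|_h+\|\vp-\vp_h\|_1\bigr),
\]
and collecting the three estimates produces the asserted bound.

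The hardest part will be the analysis of the $\vphi$-problem. The projection $\filter=I-\Pi_\Gamma$ is genuinely non-local: on each connected component of $\Gamma_f$ it removes the $L^2$-projection onto $RT_0$, and on an edge of $\Gamma_s$ adjacent to $\Gamma_f$ the removed constant is coupled to the $RT_0$-component through the compatibility condition embedded in Definition~\ref{Clement}. Consequently the usual edgewise Nitsche machinery does not apply verbatim: coercivity requires a Korn-type inequality on $(H^1(\Omega))^2/RT_0$ that is compatible with the weighted edge norms appearing in $\|\cdot\|_h$, and continuity of the symmetry, consistency, and penalty terms must be combined with the scaling and $L^2$-stability of $\Pi_\Gamma$ over $\edges[s]\cup\edges[f]$, for which the assumption $\#(\edges[s]\cup\edges[f])\le c\,h^{-1}$ is crucial.
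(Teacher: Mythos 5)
Your overall architecture matches the paper's proof: three successive Strang/C\'ea estimates (one for each of the $\vp$-, $\vphi$-, and $\primal$-problems), with a Nitsche-type coercivity/boundedness/consistency analysis for the $\vphi$-problem, assembled via $\aux-\aux_h=(\vp-\vp_h)\Id+\symbCurl(\vphi-\vphi_h)$. You also correctly identify the technical difficulties (non-locality of $\filter{\cdot}$, the quasi-uniformity assumption near $\Gamma$, Korn on the quotient space).

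However, there is a genuine gap in your treatment of the consistency errors. You only derive the $H^1$-estimate $\|\vp-\vp_h\|_1\le c\,h^{s_\vp-1}\|\vp\|_{s_\vp}$ and then claim the $\vphi$-consistency error is bounded by $c\,\|\vp-\vp_h\|_1$. This is not correct as stated. The consistency defect contains the terms $s(\vphit_h,\extPsiGamma{\vp-\vp_h})$ and $r_h(\extPsiGamma{\vp-\vp_h},\vphit_h)$, which are controlled by $|\filter{\extPsiGamma{\vp-\vp_h}}|_{\frac12,h}$. Because $\extPsiGamma{\cdot}$ is the arc-length antiderivative of $\vpt\,n$ along all of $\Gamma$, each edge contribution $h_e^{-1}\|\extPsiGamma{\vpt}\|^2_{0,e}$ is of size $\|\vpt\|^2_{0,\Gamma}$, and summing over the $\sim h^{-1}$ boundary edges yields only $|\filter{\extPsiGamma{\vp-\vp_h}}|_{\frac12,h}\le c\,h^{-1/2}\|\vp-\vp_h\|_{0,\Gamma}$. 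To close this at the claimed rate $h^{s_\vp-1}$ one needs $\|\vp-\vp_h\|_{0,\Gamma}\le c\,h^{s_\vp-1/2}\|\vp\|_{s_\vp}$, which in turn requires the $L^2$-estimate $\|\vp-\vp_h\|_0\le c\,h^{s_\vp}\|\vp\|_{s_\vp}$ (via the mesh-scaled trace inequality \eqref{eq:continuous_trace_inequality}). That $L^2$-estimate is obtained by the Aubin--Nitsche duality argument and is precisely where the convexity assumption of the theorem enters. Your proof never invokes the $L^2$- or boundary-$L^2$-error estimates for $\vp$ and never uses convexity, so with only the $H^1$-estimate the consistency bound for the $\vphi$-problem loses a factor of $h^{1/2}$ ($h^{s_\vp-3/2}$ instead of $h^{s_\vp-1}$). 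The same issue recurs in the $\primal$-problem, where the terms $T_3,T_4,T_5$ of the consistency error again depend on $|\filter(\vphi_h-\extPsiGamma{\vp_h})|_{\frac12,h}$ and hence on $h^{-1/2}\|\vp-\vp_h\|_{0,\Gamma}$.
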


We will derive these estimates by discussing the discretization errors of the $\vp$-problem, the $\vphi$-problem, and the $\primal$-problem consecutively.

\subsection{Error estimates for the \texorpdfstring{$\vp$}{p}-problem}

We start with the $\vp$-problem (\ref{eq:p_problem}) and its discretization (\ref{eq:discrete_p_problem}). It is well-known that 
the following error estimates hold:
\begin{lemma}[$\vp$-problem]\label{lem:p_error_estimate}
	Under the assumptions of \thmref{thm:error_estimate} we have
	\begin{equation*} 
			\| \vp - \vp_h \|_1 \leq c \ h^{s_\vp-1} \|\vp\|_{s_\vp}, \quad \| \vp - \vp_h \|_0  \leq c \ h^{s_\vp} \|\vp\|_{s_\vp},\quad 
		\|\vp - \vp_h\|_{0,\Gamma} 
		  \leq c \ h^{s_\vp-\frac{1}{2}} \|\vp\|_{s_\vp}.
	\end{equation*}
\end{lemma}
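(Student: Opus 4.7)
The three estimates for the $p$-problem are textbook results, so the plan is to assemble them from standard ingredients rather than develop new techniques. Since \eqref{eq:discrete_p_problem} is the conforming Galerkin approximation of the Poisson problem \eqref{eq:p_problem}, Céa's lemma yields
\[
 \|p - p_h\|_1 \le c \inf_{v_h \in \FE_{h,0}} \|p - v_h\|_1,
\]
and combining this with the approximation property \eqref{eq:approximation_property} (applied with $l = 1$) gives the energy estimate $\|p-p_h\|_1 \le c\, h^{s_p-1}\,\|p\|_{s_p}$. The one small point to check is that $\FE_{h,0}$ contains a quasi-interpolant of $p$ preserving the essential boundary condition on $\Gamma_c \cup \Gamma_s$, which is a standard property of nodal (or Scott--Zhang) interpolation.

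For the $L^2$ estimate I would use the Aubin--Nitsche duality trick. Consider the adjoint problem
\[
 (\nabla z, \nabla v) = (p-p_h, v) \quad \text{for all} \ v \in \Q,
\]
whose solution $z$ satisfies mixed boundary conditions on $\Gamma_c \cup \Gamma_s$ (Dirichlet) and $\Gamma_f$ (Neumann). Under the convexity assumption on $\Omega$ one has the full regularity estimate $\|z\|_2 \le c\,\|p-p_h\|_0$. Testing with $v = p-p_h$, using Galerkin orthogonality to subtract an arbitrary $z_h \in \FE_{h,0}$, and applying \eqref{eq:approximation_property} together with the energy estimate gives
\[
 \|p - p_h\|_0^2 = (\nabla z, \nabla(p-p_h)) = (\nabla(z - z_h), \nabla(p - p_h)) \le c\, h\,\|z\|_2\, \|p-p_h\|_1,
\]
which, after dividing by $\|p-p_h\|_0$, yields $\|p-p_h\|_0 \le c\, h^{s_p}\,\|p\|_{s_p}$.

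For the boundary estimate I would apply the continuous trace inequality \eqref{eq:continuous_trace_inequality} edgewise to $p - p_h$ on each element $T$ with an edge $e \subset \Gamma$. Squaring and summing gives
\[
 \|p-p_h\|_{0,\Gamma}^2 \le c \sum_{\substack{T \in \decomposition \\ \partial T \cap \Gamma \neq \emptyset}} h_T^{-1}\bigl(\|p-p_h\|_{0,T}^2 + h_T^2\,\|\nabla(p-p_h)\|_{0,T}^2\bigr).
\]
Using the near-boundary quasi-uniformity assumption (the total number of boundary edges is bounded by $c\,h^{-1}$, so $h_T \sim h$ for elements touching $\Gamma$) and inserting the $L^2$ and $H^1$ estimates already derived gives
\[
 \|p-p_h\|_{0,\Gamma}^2 \le c\, h^{-1}\bigl(h^{2 s_p} + h^{2 s_p}\bigr)\,\|p\|_{s_p}^2 = c\, h^{2 s_p - 1}\,\|p\|_{s_p}^2,
\]
which is the claimed bound. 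No step here is a genuine obstacle; the only delicate point is the duality argument, which is why the convexity hypothesis on $\Omega$ enters exactly at this lemma.
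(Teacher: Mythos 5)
The paper does not actually prove this lemma --- it writes only ``We refer to standard literature for the proof. Note that the convexity of $\Omega$ is used only for the $L^2$-estimates.'' Your assembly of the three standard ingredients (C\'ea plus the approximation property \eqref{eq:approximation_property} for the $H^1$ estimate, Aubin--Nitsche duality with $H^2$ adjoint regularity for the $L^2$ estimate, and a trace argument fed by the first two bounds for the boundary estimate) is exactly what the paper has in mind, and the computations are correct.

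Two small remarks. First, for the boundary estimate there is a cleaner route that bypasses the near-boundary quasiuniformity assumption entirely: the global multiplicative trace inequality $\|v\|_{0,\Gamma} \le c\,\|v\|_0^{1/2}\|v\|_1^{1/2}$ gives $\|\vp-\vp_h\|_{0,\Gamma} \le c\,(h^{s_\vp})^{1/2}(h^{s_\vp-1})^{1/2}\|\vp\|_{s_\vp} = c\,h^{s_\vp-1/2}\|\vp\|_{s_\vp}$ directly; your elementwise version of \eqref{eq:continuous_trace_inequality} works too, but only because you can assume $h_T \sim h$ on boundary elements. Second, a more delicate latent assumption (shared with the paper, not a defect of your argument): $H^2$-regularity of the adjoint problem on a convex polygon is \emph{not} automatic when the boundary conditions are mixed. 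At a vertex where the Dirichlet part $\Gamma_c \cup \Gamma_s$ meets the Neumann part $\Gamma_f$ at interior angle $\omega$, the adjoint solution generically contains a singular component $r^{\pi/(2\omega)}$, which lies in $H^2$ only if $\omega \le \pi/2$. So the phrase ``under the convexity assumption one has $\|z\|_2 \le c\,\|p-p_h\|_0$'' is strictly accurate only if the Dirichlet--Neumann transition angles are also at most $\pi/2$; the numerical example in Section \ref{sec:numerical_experiments} (a square with one free side between two simply supported sides) satisfies this, but it is worth being aware that convexity alone does not suffice.
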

We refer to standard literature for the proof. 
Note that the convexity of $\Omega$ is used only for the $L^2$-estimates.

\subsection{Error estimates for the \texorpdfstring{$\vphi$}{phi}-problem}

We follow the standard approach as outlined, e.g.,  in \cite{diPietro_ern_2012} and introduce two mesh-dependent semi-norms: the jump semi-norm $|\vphit|_{\frac{1}{2},h}$ and the average semi-norm $|\chi|_{-\frac{1}{2},h}$, which here are given by
\begin{align*}
   |\vphit|_{\frac{1}{2},h}^2 
     &= \sum_{e\in \edges[s]} h_e^{-1} \, \|\vphit \dotprod n\|^2_{0,e}
        + \sum_{e\in \edges[f]} h_e^{-1} \, \|\vphit\|^2_{0,e}, \\
   |\chi|_{ -\frac{1}{2},h}^2
     &= \sum_{e\in \edges[s]} h_e \, \|\chi \dotprod n\|^2_{0,e}
        + \sum_{e\in \edges[f]} h_e \, \|\chi\|^2_{0,e}.
\end{align*}
 
The analysis relies on the discrete coercivity and the boundedness of the bilinear form $a_{\vphi,h}$ in appropriate norms. \begin{lemma}[Coercivity]\label{coercivity}
	There is a constant $c> 0$ such that 
	\begin{equation*}
		a_{\vphi,h}(\vphit_h, \vphit_h) \geq c \, \|\vphit_h\|^2_h \quad \text{for all} \ \vphit_h \in (\FE_h)^2 /RT_0 ,
	\end{equation*}
 	provided $\eta$ is sufficiently large, where the mesh-dependent norm $\|\vphi\|_h$ is given by
\begin{equation*}
	\|\vphi\|_h^2 = (\symbCurl \vphi,\symbCurl \vphi)_{\stiffnessC^{-1}} + 
	|\filter\vphi|^2_{\frac{1}{2},h} .
\end{equation*}
\end{lemma}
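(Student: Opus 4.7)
The strategy is the standard Nitsche-type coercivity argument: the symmetric consistency terms $s(\vphit_h,\vphit_h) + s(\vphit_h,\vphit_h) = 2\,s(\vphit_h,\vphit_h)$ are indefinite, so I will bound them by means of Cauchy--Schwarz with an appropriate $h_e$-weighting, then absorb half of the resulting estimate into the $(\symbCurl\vphit_h,\symbCurl\vphit_h)_{\stiffnessC^{-1}}$ term via a discrete trace inequality and the other half into the penalty term $r_h(\vphit_h,\vphit_h)$ by choosing $\eta$ large enough.

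\textbf{Step 1 (Cauchy--Schwarz with weights).} Writing $\chi_h = (\stiffnessC^{-1}\symbCurl\vphit_h)\,t$ and splitting the boundary edges into $\edges[s]$ and $\edges[f]$, I apply the edgewise Cauchy--Schwarz inequality with weight $h_e^{1/2}\cdot h_e^{-1/2}$ to obtain
\[
  |s(\vphit_h,\vphit_h)|
     \le |\chi_h|_{-\frac{1}{2},h}\,|\filter\vphit_h|_{\frac{1}{2},h}.
\]

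\textbf{Step 2 (Discrete trace estimate for $\chi_h$).} Because $\vphit_h$ is a piecewise polynomial, so is each component of $\symbCurl\vphit_h$, hence of $\chi_h$ restricted to any element $T$. The discrete trace inequality (A2) yields, for every boundary edge $e\subset \partial T\cap (\Gamma_s\cup\Gamma_f)$,
\[
  h_e\,\|\chi_h\|^2_{0,e}
    \le c\,\|\chi_h\|^2_{0,T}
    \le c\,\|\symbCurl\vphit_h\|^2_{0,T}
\]
(using boundedness of $\stiffnessC^{-1}$). Summing over boundary edges and invoking the spectral bound $\|\symbCurl\vphit_h\|_0^2 \le \lambda_{max}(\stiffnessC)\,(\symbCurl\vphit_h,\symbCurl\vphit_h)_{\stiffnessC^{-1}}$ gives
\[
   |\chi_h|^2_{-\frac{1}{2},h}
     \le C_{tr}\,(\symbCurl\vphit_h,\symbCurl\vphit_h)_{\stiffnessC^{-1}} .
\]

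\textbf{Step 3 (Young's inequality and choice of $\eta$).} Combining Steps 1 and 2 and applying Young's inequality with parameter $\delta>0$,
\[
   2\,|s(\vphit_h,\vphit_h)|
     \le \tfrac{C_{tr}}{\delta}\,(\symbCurl\vphit_h,\symbCurl\vphit_h)_{\stiffnessC^{-1}}
         + \delta\,|\filter\vphit_h|^2_{\frac{1}{2},h}.
\]
Since $r_h(\vphit_h,\vphit_h) = \eta\,|\filter\vphit_h|^2_{\frac{1}{2},h}$, this yields
\[
   a_{\vphi,h}(\vphit_h,\vphit_h)
     \ge \bigl(1-\tfrac{C_{tr}}{\delta}\bigr)\,(\symbCurl\vphit_h,\symbCurl\vphit_h)_{\stiffnessC^{-1}}
         + (\eta - \delta)\,|\filter\vphit_h|^2_{\frac{1}{2},h} .
\]
Picking $\delta = 2\,C_{tr}$ and $\eta \ge \eta_0 := 2\,\delta$ makes both coefficients strictly positive, giving the claimed coercivity with a constant $c$ that depends only on $C_{tr}$, on $\lambda_{max}(\stiffnessC)$, and on the shape-regularity and boundary-quasiuniformity constants.

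\textbf{Expected main obstacle.} The calculation itself is routine; the delicate point is the discrete trace estimate in Step 2 on edges of $\Gamma_s\cup\Gamma_f$, where one must verify that the inverse trace bound holds uniformly with a constant independent of $h$. This is where the shape-regularity of $(\decomposition)_{h\in\mathcal H}$ and assumption (A2) are essential; note that the quotient by $RT_0$ plays no role in the coercivity proof itself and enters only when $\|\cdot\|_h$ is later turned into an $H^1$-type norm through Korn's inequality in subsequent error analysis.
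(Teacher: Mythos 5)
Your proof is correct and follows exactly the route the paper intends: the paper itself omits the proof and only cites Di Pietro--Ern, which is precisely the standard Nitsche coercivity argument you carry out (Cauchy--Schwarz with $h_e$-weights, discrete inverse trace estimate on $\chi_h$, Young's inequality, absorb into the penalty for $\eta$ large). One small presentational remark: the chain $h_e\|\chi_h\|^2_{0,e}\le c\|\chi_h\|^2_{0,T}\le c\|\symbCurl\vphit_h\|^2_{0,T}$ applies (A2) to $\chi_h$, which is only a piecewise polynomial if $\stiffnessC^{-1}$ is; the clean order is to first bound $\|\chi_h\|_{0,e}\le \lambda_{\min}(\stiffnessC)^{-1}\|\symbCurl\vphit_h\|_{0,e}$ on the edge (using $|t|=1$) and only then invoke (A2) component-wise on $\symbCurl\vphit_h$, together with $h_e\sim h_T$ from shape regularity. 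Your closing observation that the quotient by $RT_0$ is irrelevant to the coercivity estimate itself is also accurate.
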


\begin{lemma}[Boundedness]
	There is a constant $c > 0$ such that
	\begin{equation*}
		a_{\vphi,h}(\vphi, \vphit_h) \leq c \, \|\vphi\|_{h,*} \|\vphit_h\|_h
	\end{equation*}
	for all $\vphi \in (H^s(\Omega))^2 + (\FE_h)^2$, with $s>\frac{3}{2}$ and $\vphit_h \in (\FE_h)^2$,
	where the mesh-dependent norm $\| \vphi\|_{h,*}$ is given by 
\begin{align*}
	\| \vphi\|_{h,*}^2 &= \|\vphi\|^2_h + |\chi|^2_{-\frac{1}{2},h} \quad \text{with} \quad \chi = \left(\stiffnessC^{-1} \symbCurl \vphi\right) t.
\end{align*}
\end{lemma}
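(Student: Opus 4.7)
The plan is to split $a_{\vphi,h}(\vphi,\vphit_h)$ into its four defining summands---the volume Curl--Curl term, the consistency term $s(\vphi,\vphit_h)$, the symmetry term $s(\vphit_h,\vphi)$, and the penalty term $r_h(\vphi,\vphit_h)$---and to bound each one by a product that fits under $\|\vphi\|_{h,*}\|\vphit_h\|_h$. Three of the four are essentially immediate. The volume term is controlled by Cauchy--Schwarz in the $\stiffnessC^{-1}$-weighted $L^2$ inner product, yielding $\|\symbCurl\vphi\|_{\stiffnessC^{-1}}\|\symbCurl\vphit_h\|_{\stiffnessC^{-1}} \le \|\vphi\|_h\|\vphit_h\|_h$. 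The penalty term is treated by an edgewise Cauchy--Schwarz on the factors $\sqrt{\eta/h_e}\,\filter\vphi$ and $\sqrt{\eta/h_e}\,\filter\vphit_h$, producing $\eta\,|\filter\vphi|_{\frac12,h}\,|\filter\vphit_h|_{\frac12,h}$. The consistency term is bounded by pairing $h_e^{1/2}(\chi\dotprod n)$ with $h_e^{-1/2}(\filter\vphit_h\dotprod n)$ on each edge $e\in\edges[s]$ and $h_e^{1/2}\chi$ with $h_e^{-1/2}\filter\vphit_h$ on each $e\in\edges[f]$, followed by a discrete Cauchy--Schwarz over the edge sum; this gives $|s(\vphi,\vphit_h)| \le |\chi|_{-\frac12,h}\,|\filter\vphit_h|_{\frac12,h} \le \|\vphi\|_{h,*}\,\|\vphit_h\|_h$. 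The edge traces of $\chi = (\stiffnessC^{-1}\symbCurl\vphi)t$ needed here are legitimate $L^2$-objects: for the $(H^s(\Omega))^2$-component with $s>\frac{3}{2}$ one has $\symbCurl\vphi\in (H^{s-1}(\Omega))^{2\times 2}$ with $s-1>\frac{1}{2}$, so \eqref{eq:continuous_trace_inequality} applies, while for the $(\FE_h)^2$-component the traces are interpreted elementwise in the classical polynomial sense.

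The delicate step is the symmetry term $s(\vphit_h,\vphi)$, because the roles of trial and test are swapped and the trace of $\chi_h = (\stiffnessC^{-1}\symbCurl\vphit_h)t$ must be controlled by $\|\vphit_h\|_h$, which contains no explicit edge norm of $\symbCurl\vphit_h$. Here the discrete trace inequality \eqref{eq:discrete_trace_inequality} applied component-wise to the piecewise polynomial $\symbCurl\vphit_h$ yields
\begin{equation*}
  h_e^{1/2}\,\|\chi_h\|_{0,e} \le c\,\lambda_{min}(\stiffnessC)^{-1}\,\|\symbCurl\vphit_h\|_{0,T}
\end{equation*}
for every $e\subset\partial T$, using $h_e\sim h_T$ from shape-regularity. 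Squaring, summing over $e\in\edges[s]\cup\edges[f]$, and noting that each element contributes to a bounded number of boundary edges gives $|\chi_h|_{-\frac12,h} \le c\,\|\symbCurl\vphit_h\|_0 \le c\,\|\vphit_h\|_h$. A Cauchy--Schwarz on the edge pairings then produces $|s(\vphit_h,\vphi)| \le |\chi_h|_{-\frac12,h}\,|\filter\vphi|_{\frac12,h} \le c\,\|\vphit_h\|_h\,\|\vphi\|_{h,*}$, since $|\filter\vphi|_{\frac12,h}$ is a component of $\|\vphi\|_h \le \|\vphi\|_{h,*}$.

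Collecting the four bounds yields the asserted inequality with a constant depending on $\lambda_{min}(\stiffnessC)^{-1}$, $\lambda_{max}(\stiffnessC)$, $\eta$, and the constants from \eqref{eq:discrete_trace_inequality} and \eqref{eq:continuous_trace_inequality}. The main obstacle is the bound on the symmetry term: it is the only ingredient that requires a genuine discrete trace estimate rather than a direct Cauchy--Schwarz on quantities already appearing in the definitions of $a_{\vphi,h}$ and of the two mesh-dependent norms.
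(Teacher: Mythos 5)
Your proposal is correct and matches the approach the paper intends: the paper omits the proof and points to Di~Pietro--Ern, and what you write out is exactly that standard term-by-term Nitsche boundedness argument, with the discrete trace inequality \eqref{eq:discrete_trace_inequality} doing the real work on the symmetry term $s(\vphit_h,\vphi)$ (this is also what underlies the paper's auxiliary estimates \eqref{eq:s_estimate} and \eqref{eq:s_estimate_h}). One small remark: your appeal to \eqref{eq:continuous_trace_inequality} to justify the trace of $\chi$ for the $(H^s)^2$-component is not quite the right tool when $3/2<s<2$, since then $\symbCurl\vphi\notin H^1(T)$; for the boundedness lemma you do not actually need any trace estimate for $\chi$ at all, because $|\chi|_{-\frac12,h}$ already appears as a term of $\|\vphi\|_{h,*}$ (the hypothesis $s>\frac32$ merely ensures that norm is finite), so the argument goes through unchanged.
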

The proofs of these two lemmas are analogous to the proofs of similar results in \cite{diPietro_ern_2012} and are, therefore, omitted. However, for later use, we 
explicitly mention here the fundamental estimates for the consistency, symmetry, and penalty terms which are used for proving coercivity and boundedness: 
For all $\vphit \in (H^1(\Omega))^2$, $\xi \in (L^2(\Gamma))^2$, $q \in L^2(\Gamma)$ we have
\begin{align}
    |s(\vphit,\xi)|
    & \le c \, (\inf_{\vphit_h \in (\FE_h)^2} \|\vphit-\vphit_h\|_{h,*} + \|\vphit\|_h)
           \, |\filter\xi|_{\frac{1}{2},h} \label{eq:s_estimate} \\
    |c(\vpt,\vphit)|
    & 
      \le |(\stiffnessC^{-1} \vpt\Id)t|_{-\frac{1}{2},h} \, \|\vphit\|_{h}, \label{eq:c_estimate}
     \\
    |r_h(\xi,\vphit)| &
      \le c \, |\filter\xi|_{\frac{1}{2},h} \, \|\vphit\|_{h} \label{eq:r_estimate}
\end{align}
Obviously, \eqref{eq:s_estimate} simplifies to
\begin{equation}\label{eq:s_estimate_h}
    |s(\vphit_h,\xi)| \le c \, \|\vphit_h\|_{h} \, |\filter\xi|_{\frac{1}{2},h} 
    \quad \text{for all} \ \vphit_h \in (\FE_h)^2.\\
\end{equation}

Additionally we need an estimate of the consistency error.
\begin{lemma}[Consistency error]\label{lem:cons_error}
Under the assumptions of \thmref{thm:error_estimate} we have
\[
  \sup_{0 \neq \vphit_h \in (\FE_h)^2} \frac{|a_{\vphi,h}(\vphi,\vphit_h) - \langle F_{\vphi,h},\vphit_h\rangle|}{\|\vphit_h\|_h} \le c \, h^{s_\vp-1} \|\vp\|_{s_\vp}.
\]
\end{lemma}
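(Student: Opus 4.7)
The plan is to exploit the Galerkin consistency of the Nitsche-type scheme: the forms $a_{\vphi,h}$ and $\langle F_{\vphi,h},\cdot\rangle$ were constructed (see \remref{rem:consistency}) so that the exact pair $(\vp,\vphi)$ satisfies the analogous identity with $\vp$ in place of $\vp_h$. Combining \eqref{eq:Nitsche_identity} and \eqref{eq:Nitsche_identity_bc} with the essential boundary condition $\vphi - \extPsi{\vp}\in\SpPsi_0$ (cf.~\eqref{eq:essential_bc_phi}), which turns $s(\vphit_h,\vphi)$ into $s(\vphit_h,\extPsiGamma{\vp})$ and $r_h(\vphi,\vphit_h)$ into $r_h(\extPsiGamma{\vp},\vphit_h)$, I would first verify
\[
a_{\vphi,h}(\vphi,\vphit_h) = -(\vp\,\Id,\symbCurl\vphit_h)_{\stiffnessC^{-1}} - c(\vp,\vphit_h) + s(\vphit_h,\extPsiGamma{\vp}) + r_h(\extPsiGamma{\vp},\vphit_h)
\]
for every $\vphit_h \in (\FE_h)^2$. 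Subtracting the definition of $\langle F_{\vphi,h},\vphit_h\rangle$ then collapses the consistency error into a linear functional of the scalar error $\vp-\vp_h$ only.

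The resulting four pieces would be estimated in turn. The first, $-((\vp-\vp_h)\Id,\symbCurl\vphit_h)_{\stiffnessC^{-1}}$, is bounded by $\|\vp-\vp_h\|_0\|\vphit_h\|_h$; the second, $-c(\vp-\vp_h,\vphit_h)$, is bounded by $|(\stiffnessC^{-1}(\vp-\vp_h)\Id)t|_{-\frac{1}{2},h}\|\vphit_h\|_h$ via \eqref{eq:c_estimate}, which upon applying a continuous trace inequality reduces to $c\,h^{1/2}\|\vp-\vp_h\|_{0,\Gamma}\|\vphit_h\|_h$. Both of these contribute at order $h^{s_\vp}\|\vp\|_{s_\vp}\|\vphit_h\|_h$ thanks to \lemref{lem:p_error_estimate}. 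The remaining two terms, $s(\vphit_h,\extPsiGamma{\vp-\vp_h})$ and $r_h(\extPsiGamma{\vp-\vp_h},\vphit_h)$, are each controlled by $\|\vphit_h\|_h\,|\filter\extPsiGamma{\vp-\vp_h}|_{\frac{1}{2},h}$ via \eqref{eq:s_estimate_h} and \eqref{eq:r_estimate}.

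The main obstacle is thus the boundary estimate $|\filter\extPsiGamma{\vp-\vp_h}|_{\frac{1}{2},h}\leq c\,h^{s_\vp-1}\|\vp\|_{s_\vp}$. I would exploit that $\vp-\vp_h$ vanishes on $\Gamma_c\cup\Gamma_s$: therefore $\extPsiGamma{\vp-\vp_h}$ is constant on every polygon edge of $\Gamma_c\cup\Gamma_s$, while on each connected component $C\subset\Gamma_f$ it belongs to $H^1(C)$ with tangential derivative $-(\vp-\vp_h)n$ whose $L^2(C)$-norm is at most $c\,h^{s_\vp-1/2}\|\vp\|_{s_\vp}$. Consequently, the filter kills the trace entirely on edges of $\Gamma_s$ disjoint from $\Gamma_f$; on the remaining edges of $\Gamma_s$ it leaves only a constant determined by the value of $\extPsiGamma{\vp-\vp_h} - r_C(\extPsiGamma{\vp-\vp_h})$ at the enclosed corner, which a one-dimensional trace inequality bounds via the $H^1(C)$-norm of this difference; and on each $C\subset\Gamma_f$ a Poincar\'{e}-type estimate for the $L^2$-projection onto $RT_0$ yields $\|\filter\extPsiGamma{\vp-\vp_h}\|_{0,C}\leq c\,\|\vp-\vp_h\|_{0,C}$. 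Summing these contributions and invoking the assumption that the number of edges in $\edges[s]\cup\edges[f]$ is of order $h^{-1}$ produces the claimed bound, and the lemma follows.
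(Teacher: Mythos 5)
Your first two paragraphs follow the paper's route essentially verbatim: the identity
\[
a_{\vphi,h}(\vphi,\vphit_h) = -(\vp\,\Id,\symbCurl\vphit_h)_{\stiffnessC^{-1}} - c(\vp,\vphit_h) + s(\vphit_h,\extPsiGamma{\vp}) + r_h(\extPsiGamma{\vp},\vphit_h)
\]
(using \eqref{eq:Nitsche_identity}, \eqref{eq:Nitsche_identity_bc} and the fact that $\filter$ of the two arguments agree), the collapse to a functional of $\vp-\vp_h$, and the use of \eqref{eq:c_estimate}, \eqref{eq:s_estimate_h}, \eqref{eq:r_estimate} plus Cauchy--Schwarz and the continuous trace inequality are exactly what the paper does, and the two remaining objects to bound -- $|(\stiffnessC^{-1}(\vp-\vp_h)\Id)t|_{-\frac12,h}$ and $|\filter\extPsiGamma{\vp-\vp_h}|_{\frac12,h}$ -- are identified correctly.

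In the third paragraph you diverge. The paper estimates $\extPsiGamma{\vpt}$ and $\Pi_\Gamma\xi$ \emph{separately} via pointwise ($L^\infty$-type) bounds -- $\|\extPsiGamma{\vpt}\|_{0,e}^2 \le c\,h_e\|\vpt\|_{0,\Gamma}^2$ from the explicit integral formula, and $h_e^{-1}\|\Pi_\Gamma\xi\|_{0,e}^2 \le c\,h\,|\xi|_{\frac12,h}^2$ from the explicit constants $c_E,a_C,b_C$ -- and then sums over the $\mathcal{O}(h^{-1})$ boundary edges. You instead bound $\filter\extPsiGamma{\vp-\vp_h}$ directly, exploiting that $\vp-\vp_h$ vanishes on $\Gamma_c\cup\Gamma_s$ so that the filter annihilates the trace on $\Gamma_s$ away from $\Gamma_f$, and applying Poincar\'e on each component of $\Gamma_f$. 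This is structurally appealing, but as written it has a genuine gap.

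The problem is the passage from the per-component $L^2$ bound $\|\filter\extPsiGamma{\vp-\vp_h}\|_{0,C}\le c\|\vp-\vp_h\|_{0,C}$ to the weighted semi-norm $|\filter\extPsiGamma{\vp-\vp_h}|_{\frac12,h}$. One has
\[
\sum_{e\subset C} h_e^{-1}\|\filter\extPsiGamma{\vp-\vp_h}\|_{0,e}^2 \le \Bigl(\max_{e\subset C} h_e^{-1}\Bigr)\,\|\filter\extPsiGamma{\vp-\vp_h}\|_{0,C}^2,
\]
which is of the right order $h^{-1}$ only if $\min_e h_e \ge c\,h$ near the boundary, i.e.\ under a local quasi-uniformity hypothesis. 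The paper explicitly assumes only that the \emph{number} of boundary edges is $\mathcal{O}(h^{-1})$, which is weaker; that weaker assumption only helps if the contribution of each individual edge $e$ is controlled by $c\,h_e\,\|\vp-\vp_h\|_{0,\Gamma}^2$, i.e.\ by an $L^\infty$ bound for $\filter\extPsiGamma{\vp-\vp_h}$, not merely an $L^2(C)$ one. You would therefore need to upgrade the Poincar\'e estimate to $\|\filter\extPsiGamma{\vp-\vp_h}\|_{L^\infty(C)}\le c\|\vp-\vp_h\|_{0,\Gamma}$, say via the 1D Sobolev embedding $H^1(C)\hookrightarrow L^\infty(C)$. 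This, in turn, requires controlling $\|\pp{(\extPsiGamma{\vp-\vp_h}-r_C)}{t}\|_{0,C}$, which is not automatic: $\pp{r_C}{t}$ contains the coefficient $a_C$ of the $L^2$-projection onto $RT_0$, and $|a_C|\le c\|\vp-\vp_h\|_{0,C}$ requires an additional norm-equivalence argument on the three-dimensional space $RT_0$ (e.g.\ comparing $r_C$ with the zeroth-order mean $r'=\overline{\extPsiGamma{\vp-\vp_h}}$, and using that $\|r_C-r'\|_{0,C}\le\|\extPsiGamma{\vp-\vp_h}-r'\|_{0,C}\le c\|\vp-\vp_h\|_{0,C}$). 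None of this is fatal, but your proof is incomplete without these steps, and the paper's more elementary pointwise estimates on $\extPsiGamma{\cdot}$ and $\Pi_\Gamma$ separately bypass them entirely.
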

\begin{proof}
\red{From \remref{rem:consistency} it follows that $\vphi$ satisfies
\begin{equation*}
	a_{\vphi,h}(\vphi, \vphit_h) = -(\vp \Id, \symbCurl \vphit_h)_{\stiffnessC^{-1}} - c(\vp, \vphit_h)  + s(\vphit_h, \extPsiGamma{\vp}) + r_h(\extPsiGamma{\vp}, \vphit_h)
\end{equation*}
for all $\vphit_h \in (\FE_h)^2/RT_0$.
Therefore, the difference to the right-hand side in \eqref{eq:discrete_phi_problem} is given as}
\begin{equation*}
	\begin{aligned}
        a_{\vphi,h}(\vphi,\vphit_h) - \langle F_{\vphi,h},\vphit_h\rangle  
        & = - \left((\vp - \vp_h) \Id, \symbCurl \vphit_h\right)_{\stiffnessC^{-1}}
		 - c(\vp-\vp_h,\vphit_h) \\
		&\quad {} + s(\vphit_h, \extPsiGamma{\vp-\vp_h}) + r_h(\extPsiGamma{\vp-\vp_h}, \vphit_h).
	\end{aligned}
\end{equation*}
From the Cauchy inequality on $\Omega$ and \eqref{eq:c_estimate}, \eqref{eq:s_estimate_h}, \eqref{eq:r_estimate} we obtain
\begin{align*}
  & |a_{\vphi,h}(\vphi,\vphit_h) - \langle F_{\vphi,h},\vphit_h\rangle| \\
  & \quad \le c \left( \|p-p_h\|_0 + |(\stiffnessC^{-1} (\vp - \vp_h)\Id)t|_{-\frac{1}{2},h}
    + |\filter{\extPsiGamma{\vp-\vp_h}}|_{\frac{1}{2},h} \right) \|\vphit_h\|_{h} .
\end{align*}
	From the continuous trace inequality \eqref{eq:continuous_trace_inequality} it follows that
	\begin{equation*}
	\begin{aligned}
		|(\stiffnessC^{-1} (\vp - \vp_h)\Id)t|_{-\frac{1}{2},h}^2 
		& \le c  \sum_{e\in \edges[f]} h_e \, \|\vp - \vp_h \|^2_{0,e} 
		\leq c \ ( \|\vp - \vp_h\|^2_0 + h^2 \|\grad(\vp - \vp_h)\|^2_0).
	\end{aligned}
	\end{equation*}
From the definition of $\Pi_\Gamma \xi$ and $\extPsiGamma{\vpt}$ one obtains
\begin{equation*}
  h_e^{-1} \|\Pi_\Gamma \xi \|_{0,e}^2 \le c \, h \, |\xi|_{\frac{1}{2},h}^2
  \quad \text{and} \quad
  h_e^{-1} \|\extPsiGamma{\vpt}\|_{0,e}^2 \le c \, \|\vpt\|_{0,\Gamma}^2
\end{equation*}
for all $e \in \edges[s] \cup \edges[f]$, $\xi \in (L^2(\Gamma))^2$, and $\vpt \in L^2(\Gamma)$. 
\red{
For the first inequality note that
\begin{equation*}
	\|\Pi_\Gamma \xi \|_{L^\infty(\Gamma)} \leq \max(|c_E(\xi)|,|a_C(\xi)|, |b_C(\xi)|),
\end{equation*}
where $c_E(\xi) \in \mathbb{R}$ and $r_C(\xi)(x) = a_C(\xi) + b_C(\xi) \, x$ with $a_C(\xi)\in\mathbb{R}^2$ and $b_C(\xi)\in\mathbb{R}$ are the data used in the construction of $\Pi_\Gamma \xi$ in \defref{Clement}. 
On an edge $E \subset \Gamma_s$ we have
\begin{align*}
	&|c_E(\xi)| = \left| \frac{1}{|E|} \int_E \xi \cdot n \ ds \right|
	\leq c \ \sum_{e \subset E} \int_e |\xi \cdot n| \ d s 
	\leq c \ \sum_{e \subset E} h_e^{1/2} \ \|\xi \cdot n\|_{0,e} \\
	& = c \sum_{e \subset E} h_e \ (h_e^{-1/2} \|\xi \cdot n\|_{0,e})
	\leq c \ h^{1/2} \, |\xi|_{\frac{1}{2},h}.
\end{align*}
By similar arguments analogous results hold for $|a_C(\xi)|$ and $|b_C(\xi)|$. Combining these estimates with
\begin{equation*}
	\|\Pi_\Gamma \xi \|_{0,e}^2 \leq h_e \|\Pi_\Gamma \xi \|^2_{L^\infty(\Gamma)}
\end{equation*}
provides the first inequality. The second inequality holds since
\begin{equation*}
	\|\extPsiGamma{\vpt}\|_{0,e}^2 = \int_e \left|\int_0^\sigma \vpt n \ ds \right|^2 d\sigma \leq \int_e \left( \int_\Gamma |\vpt| \ ds \right)^2 d\sigma \leq c \ h_e \|\vpt\|^2_{0,\Gamma}.
\end{equation*}
}

Using that the total number of edges in $\edges[s] \cup \edges[f]$ is bounded by $c \, h^{-1}$ it follows that
\begin{equation*}
  |\Pi_\Gamma \xi|_{\frac{1}{2},h} \le c \, |\xi|_{\frac{1}{2},h}
  \quad \text{and} \quad
 |\extPsiGamma{\vpt}|_{\frac{1}{2},h} \le c \, h^{-\frac{1}{2}} \, \|q\|_{0,\Gamma},
\end{equation*}
and, therefore,
    \begin{equation*}
      |\filter{\extPsiGamma{\vp-\vp_h}}|_{\frac{1}{2},h}^2
      \le c \, h^{-1} \|\vp-\vp_h\|^2_{0,\Gamma} .
    \end{equation*}
Then the estimate immediately follows from the error estimates in \lemref{lem:p_error_estimate}.
\end{proof}

From the last three lemmas we obtain the following error estimate for the $\vphi$-problem:

\begin{lemma}[$\vphi$-problem]\label{lem:phi_error_estimate}
	Under the assumptions of \thmref{thm:error_estimate} we have
	\begin{equation*}
		\| \vphi - \vphi_h \|_h \leq c \ (h^{s_\vphi-1} \|\vphi\|_{s_\vphi}+ h^{s_\vp-1} \|\vp\|_{s_\vp}).
	\end{equation*}
\end{lemma}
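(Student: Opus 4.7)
The plan is to follow a standard Strang-type argument for nonconforming/consistent methods, using the three ingredients already at hand: coercivity (\lemref{coercivity}), boundedness of $a_{\vphi,h}$ in the pair of norms $\|\cdot\|_{h,*}$ and $\|\cdot\|_h$, and the consistency error estimate from \lemref{lem:cons_error}. First, I would pick an arbitrary $\vphit_h \in (\FE_h)^2/RT_0$ and split the error by the triangle inequality as $\|\vphi - \vphi_h\|_h \le \|\vphi - \vphit_h\|_h + \|\vphit_h - \vphi_h\|_h$. For the discrete part $\vphit_h - \vphi_h \in (\FE_h)^2/RT_0$, coercivity yields $\|\vphit_h - \vphi_h\|_h^2 \le c\, a_{\vphi,h}(\vphit_h - \vphi_h, \vphit_h - \vphi_h)$.

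Next, I would rewrite the right-hand side using the discrete problem \eqref{eq:discrete_phi_problem}, which gives $a_{\vphi,h}(\vphi_h,\psi_h) = \langle F_{\vphi,h},\psi_h\rangle$ for every $\psi_h \in (\FE_h)^2/RT_0$. With $\psi_h = \vphit_h - \vphi_h$ this leads to the identity
\begin{equation*}
   a_{\vphi,h}(\vphit_h - \vphi_h, \psi_h)
   = a_{\vphi,h}(\vphit_h - \vphi, \psi_h)
     + \bigl(a_{\vphi,h}(\vphi, \psi_h) - \langle F_{\vphi,h},\psi_h\rangle\bigr).
\end{equation*}
The first term is controlled by boundedness, $|a_{\vphi,h}(\vphit_h - \vphi, \psi_h)| \le c\, \|\vphit_h - \vphi\|_{h,*}\,\|\psi_h\|_h$, and the second (the consistency error) is bounded by \lemref{lem:cons_error} as $c\, h^{s_\vp-1}\|\vp\|_{s_\vp}\,\|\psi_h\|_h$. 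Dividing by $\|\psi_h\|_h$ and combining with the triangle split gives
\begin{equation*}
   \|\vphi - \vphi_h\|_h
   \le c\, \bigl(\inf_{\vphit_h \in (\FE_h)^2/RT_0} \|\vphi - \vphit_h\|_{h,*} + h^{s_\vp-1}\|\vp\|_{s_\vp}\bigr).
\end{equation*}

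It remains to produce a best-approximation estimate in the stronger norm $\|\cdot\|_{h,*}$. Picking a standard quasi-interpolant $\vphit_h$ of $\vphi$ componentwise, the volume part $\|\symbCurl(\vphi - \vphit_h)\|_0$ is handled by \eqref{eq:approximation_property} with $l=1$. The jump seminorm $|\filter{(\vphi-\vphit_h)}|_{\frac{1}{2},h}$ and the average seminorm $|(\stiffnessC^{-1}\symbCurl(\vphi-\vphit_h))t|_{-\frac{1}{2},h}$ are treated edge-by-edge using the continuous trace inequality \eqref{eq:continuous_trace_inequality} (for the continuous part) and \eqref{eq:discrete_trace_inequality} (after subtracting a discrete function), yielding an overall bound of order $h^{s_\vphi-1}\|\vphi\|_{s_\vphi}$, which requires $s_\vphi > 3/2$ so that the edge traces of the second derivatives entering $\chi$ make sense. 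Combining both estimates completes the proof.

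The main obstacle I expect is the treatment of the $|\chi|_{-\frac{1}{2},h}$ contribution in the $\|\cdot\|_{h,*}$-norm, where one loses one order of differentiation compared to the $\|\cdot\|_h$-norm; this is exactly why the restriction $s_\vphi > 3/2$ appears, and why the extra factor $h_e$ in the $-\frac{1}{2}$-seminorm is needed to recover the $h^{s_\vphi-1}$ convergence rate. The use of the quotient space $(\FE_h)^2/RT_0$ (rather than $(\FE_h)^2$) is harmless for approximation since the seminorms in $\|\cdot\|_h$ vanish on $RT_0$, so one may choose the interpolant modulo $RT_0$ without penalty.
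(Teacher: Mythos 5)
Your argument is correct and takes essentially the same route as the paper: both rely on a Strang-type estimate combining coercivity, boundedness of $a_{\vphi,h}$ in the $\|\cdot\|_{h,*}$/$\|\cdot\|_h$ pair, the consistency error bound from \lemref{lem:cons_error}, and a best-approximation estimate in the $\|\cdot\|_{h,*}$-norm derived from \eqref{eq:approximation_property} and \eqref{eq:continuous_trace_inequality}. The only cosmetic difference is that you invoke \eqref{eq:discrete_trace_inequality} as well when bounding the trace terms, which is not needed here since \eqref{eq:continuous_trace_inequality} applied directly to $\vphi-\vphit_h$ already suffices; the paper cites only (A1) and (A3) for the approximation step.
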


\begin{proof}
	From coercivity and boundedness we obtain by standard arguments: 
	\begin{equation*}
	  \|\vphi - \vphi_h\|_h \le c \, \left( \inf_{\vphit_h \in (\FE_{h})^2} \|\vphit_h - \vphi\|_{h,*} + \sup_{0 \neq \vphitt_h \in (\FE_h)^2} \frac{|a_{\vphi,h}(\vphi,\vphitt_h) - \langle F_{\vphi,h},\vphitt_h\rangle|}{\|\vphitt_h\|_h} \right).
	\end{equation*}
Since $\vphi \in (H^s(\Omega))^2$, with $\frac{3}{2}<s \leq k+1$, it follows from
assumptions \eqref{eq:approximation_property} and \eqref{eq:continuous_trace_inequality}
that
\begin{equation}\label{eq:approximation_property_h*}
	\tag{A3*}
	\inf_{\vphit_h \in (\FE_h)^2} \|\vphi - \vphit_h\|_{h,*} 
	  \leq c \ h^{s-1} \|\vphi\|_s.
\end{equation}
This approximation property and \lemref{lem:cons_error} directly imply the error estimate.
\end{proof}

\subsection{Error estimates for the \texorpdfstring{$\primal$}{w}-problem}

\begin{lemma}[$\primal$-problem] \label{lem:w_error_estimate}
	Under the assumptions of \thmref{thm:error_estimate} we have
	\begin{equation*}
		\| \primal - \primal_h \|_1 \leq c \left( h^{s_\primal-1} \|\primal\|_{s_\primal} + h^{s_\vp-1} \|\vp\|_{s_\vp} + h^{s_\vphi-1} \|\vphi\|_{s_\vphi} \right).
	\end{equation*}
\end{lemma}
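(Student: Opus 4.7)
The plan is to apply Strang's second lemma to the discrete $\primal$-problem \eqref{eq:discrete_w_problem}, which is a conforming Galerkin method in $\FE_{h,0}$ with a perturbed right-hand side $\langle F_{\primal,h},\cdot\rangle$. This yields
\begin{equation*}
  \|\primal - \primal_h\|_1 \le c \, \Bigl( \inf_{\vpt_h \in \FE_{h,0}} \|\primal - \vpt_h\|_1 + \sup_{0 \neq \vpt_h \in \FE_{h,0}} \frac{|\langle F_\primal,\vpt_h\rangle - \langle F_{\primal,h},\vpt_h\rangle|}{\|\vpt_h\|_1} \Bigr),
\end{equation*}
and the approximation term is bounded by $c \, h^{s_\primal-1}\|\primal\|_{s_\primal}$ by assumption~\eqref{eq:approximation_property}.

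For the consistency term I would exploit the identity~\eqref{eq:rhs_w_problem} to rewrite the continuous right-hand side $\langle F_\primal,\vpt_h\rangle = (\aux,\vpt_h\Id+\symbCurl\extPsi{\vpt_h})_{\stiffnessC^{-1}}$ in the same form as \eqref{eq:rhs_discrete_w_problem}, observing that the additional penalty contribution $r_h(\vphi-\extPsiGamma{\vp},\extPsiGamma{\vpt_h})$ vanishes at the exact solution because $\vphi-\extPsiGamma{\vp}\in\SpPsi_0$ satisfies the boundary conditions~\eqref{eq:bc_Psi0_new}. Subtracting then yields
\begin{equation*}
  \langle F_\primal,\vpt_h\rangle - \langle F_{\primal,h},\vpt_h\rangle = T_1 - T_2 - T_3 - T_4
\end{equation*}
with
\begin{equation*}
  T_1 = (\aux-\aux_h,\vpt_h\Id)_{\stiffnessC^{-1}}, \quad
  T_2 = s(\vphi-\vphi_h,\extPsiGamma{\vpt_h}),
\end{equation*}
\begin{equation*}
  T_3 = c(\vp-\vp_h,\extPsiGamma{\vpt_h}), \quad
  T_4 = r_h((\vphi-\vphi_h)-\extPsiGamma{\vp-\vp_h},\extPsiGamma{\vpt_h}).
\end{equation*}

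The bound on $T_1$ is immediate from Cauchy--Schwarz, the representation $\aux-\aux_h=(\vp-\vp_h)\Id+\symbCurl(\vphi-\vphi_h)$, and Lemmas~\ref{lem:p_error_estimate} and~\ref{lem:phi_error_estimate}, giving $|T_1| \le c \, (h^{s_\vp-1}\|\vp\|_{s_\vp}+h^{s_\vphi-1}\|\vphi\|_{s_\vphi}) \, \|\vpt_h\|_1$. For $T_2$, $T_3$, $T_4$ I would apply the fundamental bounds \eqref{eq:s_estimate}, \eqref{eq:c_estimate}, \eqref{eq:r_estimate} combined with the estimate $|\filter\extPsiGamma{\vpt_h}|_{\frac{1}{2},h} \le c \, h^{-1/2}\|\vpt_h\|_{0,\Gamma} \le c \, h^{-1/2}\|\vpt_h\|_1$ already established in the proof of Lemma~\ref{lem:cons_error}. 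For $T_3$ the continuous trace inequality~\eqref{eq:continuous_trace_inequality} together with Lemma~\ref{lem:p_error_estimate} yields $|(\stiffnessC^{-1}(\vp-\vp_h)\Id)t|_{-\frac{1}{2},h} \le c \, h^{s_\vp}\|\vp\|_{s_\vp}$, so $|T_3| \le c \, h^{s_\vp-1/2}\|\vp\|_{s_\vp}\|\vpt_h\|_1$. For $T_2$ and $T_4$ the same strategy is applied after splitting $\vphi-\vphi_h$ (and $\vp-\vp_h$) into a smooth best-approximation error, handled by \eqref{eq:continuous_trace_inequality}, and a discrete error, handled by \eqref{eq:discrete_trace_inequality} combined with Lemma~\ref{lem:phi_error_estimate}.

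The main obstacle is the delicate balancing of powers of $h$ in $T_2$, $T_3$, $T_4$: the $h^{-1/2}$ factor carried by $|\filter\extPsiGamma{\vpt_h}|_{\frac{1}{2},h}$, which originates from the $\Pi_\Gamma$-component as in the proof of Lemma~\ref{lem:cons_error}, must be absorbed by the boundary super-convergence $\|\vp-\vp_h\|_{0,\Gamma} \le c \, h^{s_\vp-1/2}\|\vp\|_{s_\vp}$, which relies on the convexity of $\Omega$ through the $L^2$-estimate in Lemma~\ref{lem:p_error_estimate}, and, for the $\vphi$-terms, by exploiting that the dominant contribution to a weighted boundary flux of $\vphi-\vphi_h$ originates from the discrete part, where the discrete trace inequality~\eqref{eq:discrete_trace_inequality} provides the necessary $h^{1/2}$-gain. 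This compensation is the same mechanism already used for the $\vphi$-problem in Lemma~\ref{lem:cons_error}.
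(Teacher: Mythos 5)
Your setup is sound: first Strang lemma, bound the approximation term by~\eqref{eq:approximation_property}, and decompose the consistency error into the four contributions $T_1,\dots,T_4$ exactly as you do. The bound on $T_1$ is fine, and $T_3$ does indeed close because the $h^{-1/2}$ in $|\filter\extPsiGamma{\vpt_h}|_{\frac{1}{2},h}$ is absorbed by the genuine $L^2$-superconvergence $\|\vp-\vp_h\|_0 + h\|\grad(\vp-\vp_h)\|_0 \le c\,h^{s_\vp}\|\vp\|_{s_\vp}$, which relies on convexity. So far your argument matches the structure of the target estimate.

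The gap is in $T_2$ and the $\vphi$-part of $T_4$. You invoke the discrete trace inequality~\eqref{eq:discrete_trace_inequality} to claim ``the necessary $h^{1/2}$-gain'', but there is no gain to be had: \eqref{eq:discrete_trace_inequality} gives $\|\vphit_h\|_{0,e}\le c\,h_T^{-1/2}\|\vphit_h\|_{0,T}$, which exactly cancels the $h_e^{1/2}$ weight in the average seminorm and yields $|\chi-\chi_h|_{-\frac{1}{2},h}\le c\,\|\vphi-\vphi_h\|_h$, i.e.\ order $h^{s_\vphi-1}$, not $h^{s_\vphi-1/2}$. Unlike the $\vp$-problem, there is no $L^2$-superconvergence result available for the Nitsche approximation of the $\vphi$-problem (Lemma~\ref{lem:phi_error_estimate} only controls $\|\cdot\|_h$). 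Consequently, estimating $T_2$ and $T_4$ directly with $|\filter\extPsiGamma{\vpt_h}|_{\frac{1}{2},h}\le c\,h^{-1/2}\|\vpt_h\|_1$ leaves you with $h^{s_\vphi-3/2}$, half an order short of the claim.

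The paper closes this hole by an additional Galerkin-orthogonality step for the discrete $\vphi$-problem before estimating anything. Subtracting \eqref{eq:Nitsche_identity} from \eqref{eq:discrete_phi_problem} gives, for every $\vphit_h\in(\FE_h)^2$,
\begin{equation*}
  (\aux-\aux_h,\symbCurl\vphit_h)_{\stiffnessC^{-1}} + s(\vphi-\vphi_h,\vphit_h) + c(\vp-\vp_h,\vphit_h) - s(\vphit_h,\vphi_h-\extPsiGamma{\vp_h}) - r_h(\vphi_h-\extPsiGamma{\vp_h},\vphit_h) = 0,
\end{equation*}
which is then subtracted from your expression for the consistency error. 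With the particular choice $\vphit_h=\Pi_h\bigl(\extPsi{\vpt_h}\bigr)$, every occurrence of $\extPsiGamma{\vpt_h}$ is replaced by $\extPsiGamma{\vpt_h}-\vphit_h$, and the crucial point is that $\|\extPsi{\vpt_h}-\vphit_h\|_h\le c\,\|\extPsi{\vpt_h}\|_1\le c\,\|\vpt_h\|_1$, with no $h^{-1/2}$ blow-up, because the $L^2$-projection onto $(\FE_h)^2$ kills the large jump seminorm by a standard approximation/inverse-trace argument. Two extra terms $(\aux-\aux_h,-\symbCurl\vphit_h)_{\stiffnessC^{-1}}$ and $s(\vphit_h,\vphi_h-\extPsiGamma{\vp_h})+r_h(\vphi_h-\extPsiGamma{\vp_h},\cdot)$ then appear, but these are harmless: they are estimated by $\|\vphit_h\|_h\le c\,\|\vpt_h\|_1$ and by $|\filter(\vphi_h-\extPsiGamma{\vp_h})|_{\frac{1}{2},h}\le\|\vphi-\vphi_h\|_h+ch^{-1/2}\|\vp-\vp_h\|_{0,\Gamma}$, and the latter $h^{-1/2}$ is again compensated by the $L^2$-boundary estimate from Lemma~\ref{lem:p_error_estimate}. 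Without inserting $\vphit_h$ your decomposition cannot reach the stated rate.
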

\begin{proof}
	The first lemma of Strang provides
	\begin{equation*}
		\| \primal - \primal_h \|_1 \leq c \ \left( \inf_{\primalt_h \in \FE_{h,0}} \| \primal - \primalt_h \|_1 + \sup_{ 0 \neq \vpt_h \in \FE_{h,0} } \frac{| (\grad \primal, \grad \vpt_h) - \langle F_{\primal,h},\vpt_h \rangle|}{\|\vpt_h\|_1} \right).
	\end{equation*}
	The first term can be estimated by the approximation property \eqref{eq:approximation_property}. 
	It remains to estimate the consistency error. 
	Using \eqref{eq:w_problem} with \eqref{eq:rhs_w_problem} and \eqref{eq:rhs_discrete_w_problem} we have
	\begin{align*}
		(\grad \primal, \grad \vpt_h) - \langle F_{\primal,h}, \vpt_h \rangle
		& = (\aux - \aux_h, \vpt_h \Id)_{\stiffnessC^{-1}} 
		     - s(\vphi-\vphi_h,\extPsiGamma{\vpt_h}) - c(\vp - \vp_h, \extPsiGamma{\vpt_h})
		  \\
		&\quad {} 
		+ r_h\bigl( \vphi_h - \extPsiGamma{\vp_h}, \extPsiGamma{\vpt_h}\bigr).
	\end{align*}
	Subtracting \eqref{eq:Nitsche_identity} and \eqref{eq:discrete_phi_problem} leads to \begin{align*}
        & (\aux-\aux_h, \symbCurl \vphit_h)_{\stiffnessC^{-1}} + s(\vphi-\vphi_h,\red{\vphit_h}) + c(\vp - \vp_h, \red{\vphit_h}) \\
		&\quad {} - s(\vphit_h, \vphi_h - \extPsiGamma{\vp_h}) - r_h(\vphi_h - \extPsiGamma{\vp_h}, \vphit_h) = 0
		\quad \text{for all} \ \vphit_h \in (\FE_h)^2.
	\end{align*}
	By subtracting the last two equations we obtain
	\begin{align*}
		&(\grad \primal, \grad \vpt_h) - \langle F_{\primal,h}, \vpt_h \rangle
		= (\aux - \aux_h, \vpt_h \Id \red{-} \symbCurl \vphit_h)_{\stiffnessC^{-1}}\\
		&\quad {} - s(\vphi-\vphi_h,\extPsiGamma{\vpt_h}-\vphit_h) - c(\vp-\vp_h,\extPsiGamma{\vpt_h}-\vphit_h)\\
		&\quad {} + s(\vphit_h, \vphi_h - \extPsiGamma{\vp_h})
	     + r_h\bigl( \vphi_h - \extPsiGamma{\vp_h}, \extPsiGamma{\vpt_h} - \vphit_h\bigr).
	\end{align*}
    The five terms on the right-hand side, denoted by $T_1$, $T_2$, \ldots $T_5$ in consecutive order of their appearance, are estimated as follows: From the Cauchy inequality on $\Omega$ and \eqref{eq:s_estimate}, \eqref{eq:c_estimate}, \eqref{eq:s_estimate_h}, \eqref{eq:r_estimate} we obtain
    \begin{align*}
      |T_1| & \le c \, (\|\vp-\vp_h\|_0 + \|\vphi-\vphi_h\|_h)  \, (\|\vpt_h\|_0 + \|\vphit_h\|_h), \\
      |T_2| & \le c \, (\inf_{\vphitt_h \in (\FE_h)^2} \|\vphi-\vphitt_h\|_{h,*} + \|\vphi-\vphi_h\|_h) \, \| \extPsiGamma{\vpt_h} - \vphit_h\|_h ,\\
      |T_3| & \le c \, |(\stiffnessC^{-1} (\vp-\vp_h)\Id)t|_{-\frac{1}{2},h} \|\extPsi{\vpt_h} - \vphit_h\|_h, \\
      |T_4| & \le c \,  |\filter(\vphi_h - \extPsiGamma{\vp_h})|_{\frac{1}{2},h} \, \|\vphit_h\|_h, \\
      |T_5| & \le c \,  |\filter(\vphi_h - \extPsiGamma{\vp_h})|_{\frac{1}{2},h} \, \|\extPsiGamma{\vpt_h} - \vphit_h\|_h
    \end{align*}
    for all $\vphit_h \in (\FE_h)^2$. In particular, we choose $\vphit_h = \Pi_h (\extPsi{\vpt_h})$, where $\Pi_h$ denotes the $L^2$-orthogonal projection onto $(\FE_h)^2$.
    Then, following , e.g., \cite{diPietro_ern_2012}, it can be shown that
	\begin{equation*}
		\|\extPsi{\vpt_h} - \vphit_h\|_h \leq c \, \ \|\extPsi{\vpt_h}\|_1 
		\quad \text{and, therefore,} \quad
		\| \vphit_h\|_h \leq c \, \ \|\extPsi{\vpt_h}\|_1.
	\end{equation*}
	With the stability estimate \eqref{eq:extension_stability} it follows that
	\begin{equation*}
		\|\extPsi{\vpt_h} - \vphit_h\|_h \leq c \, \ \|\vpt_h\|_1 \quad \text{and} \quad
		\| \vphit_h\|_h \leq c \, \ \|\vpt_h\|_1.
	\end{equation*}
	Observe that
	\begin{equation*}
	   |(\stiffnessC^{-1} (\vp-\vp_h)\Id)t|_{-\frac{1}{2},h} \le c(\|\vp-\vp_h\|_0 + h \|\grad (\vp-\vp_h)\|_0)
	\end{equation*}
	and
	\begin{align*}
	  & |\filter(\vphi_h - \extPsiGamma{\vp_h})|_{\frac{1}{2},h}
	    = 
	  |\filter(\vphi_h - \vphi) + \filter (\extPsiGamma{\vp-\vp_h})|_{\frac{1}{2},h} \\
	  & \quad \le \|\vphi_h - \vphi\|_h + |\filter {\extPsiGamma{\vp-\vp_h}}|_{\frac{1}{2},h}
	    \le \|\vphi_h - \vphi\|_h + c h^{-\frac{1}{2}} \|\vp-\vp_h\|_{0,\Gamma},
	\end{align*}
	see the estimates in the proof of \lemref{lem:phi_error_estimate}.
	Then the result follows directly from the estimates in \lemref{lem:p_error_estimate}, \lemref{lem:phi_error_estimate}, and \eqref{eq:approximation_property_h*}.
\end{proof}

Finally, we obtain the proof of the main result:
\begin{proof}[Proof of \thmref{thm:error_estimate}]
	By combining the results of \lemref{lem:p_error_estimate} and \lemref{lem:phi_error_estimate} we obtain
	\begin{equation*}
		\| \aux - \aux_h\|_0 \leq \|\vp - \vp_h \|_0 + \|\symbCurl (\vphi -\vphi_h) \|_0 \leq c \ ( h^{s_{\vp}-1} \|\vp\|_{s_\vp} + h^{s_\vphi -1} \|\vphi\|_{s_\vphi} ).
	\end{equation*}
	Together with \lemref{lem:w_error_estimate} this completes the proof.
\end{proof}

\section{Numerical experiments}
\label{sec:numerical_experiments}

We consider a square plate $\Omega = (-1,1)^2$ with simply supported north and south boundary, clamped west boundary and free east boundary. The material tensor $\stiffnessC$ is the identity, and the load is given by
\begin{equation*}
	 f(x,y) = 4 \pi ^4 \sin (\pi  x) \sin (\pi  y).
\end{equation*}
The exact solution is of the form
\begin{equation*}
	\primal(x,y) = \bigl((a + b x)\cosh(\pi x) + (c + d x)\sinh(\pi x) + \sin(\pi x)\bigr) \sin(\pi y),
\end{equation*}
which satisfies the boundary conditions on the simply supported boundary parts anyway. The constants $a, b, c$ and $d$ are chosen such that the four remaining boundary conditions (on the clamped and free boundary parts) are fulfilled, for details, see \cite{reddy_2007}.

In order to illustrate the flexibility of our discretization method we use 
for $\FE_h$  isogeometric B-spline discretization spaces of degree $k$ with maximum smoothness; see, e.g, \cite{cotrell_hughes_brazilevs_2009,daVeiga_buffa_sangalli_vazquez_2014} for information on isogeometric analysis. For $k=1$, the discretization space $\FE_h$ coincides with the standard finite element space of continuous and piecewise bilinear elements. 
In all experiments a sparse direct solver is used for each of the three sub-problems. 
The implementation is done in the framework of G+Smo
("Geometry + Simulation Modules"), an object-oriented C++ library, see \url{https://ricamsvn.ricam.oeaw.ac.at/trac/gismo/wiki/WikiStart}.

In Tables \ref{tab:mixed_k_1}, \ref{tab:mixed_k_2}, \ref{tab:mixed_k_3} the discretization errors for $k=1,2,3$ are shown. 
The first column shows the refinement level $L$, i.e.~the number of uniform $h$-refinements of $\Omega$. The column "order" contains the error reduction relative to the previous level.
The experiments show optimal convergence rates for $\primal$ and $\aux$ as predicted by the analysis. 
For the columns containing the errors for $\vp$ and $\vphi$, the (analytically not available) exact solutions $\vp$ and $\vphi$ are replaced by their numerical solutions on level $L = 9$. Note that also for $\vp$ and $\vphi$ optimal convergence rates are observed.

\begin{table}[H]
\caption{Discretization errors, $k=1$. \label{tab:mixed_k_1}}
\centering
\resizebox{\textwidth}{!}{
\begin{tabular}{c|cc|cc|cc|cc}
  \hspace*{\fill} \rule[-1ex]{0ex}{3.5ex}$L$ \hspace*{\fill}
    & \makebox[10ex][c]{$\|\primal-\primal_h\|_1$} 
    & \makebox[4ex][c]{order} 
    & \makebox[10ex][c]{$\|\aux-\aux_{h}\|_0$} 
    & \makebox[4ex][c]{order} 
    & \makebox[10ex][c]{$\|\vp-\vp_{h}\|_0$} 
    & \makebox[4ex][c]{order} 
    & \makebox[10ex][c]{$\|\vphi-\vphi_{h}\|_1$} 
    & \makebox[4ex][c]{order}\\
  \hline
    4 &  $1.09 \cdot 10^{-1}$ & $0.992$ & $1.24 \cdot 10^{-1}$ & $0.974$ & $1.43 \cdot 10^{-2}$ & $1.985$ & $1.04 \cdot 10^{-1}$ & $1.050$ \rule[-1ex]{0ex}{3.5ex}\\ 
    5 &  $5.47 \cdot 10^{-2}$ & $0.998$ & $6.26 \cdot 10^{-2}$ & $0.993$ & $3.59 \cdot 10^{-3}$ & $1.999$ & $5.17 \cdot 10^{-2}$ & $1.017$\\
    6 &  $2.73 \cdot 10^{-2}$ & $0.999$ & $3.13 \cdot 10^{-2}$ & $0.998$ & $8.90 \cdot 10^{-4}$ & $2.011$ & $2.56 \cdot 10^{-2}$ & $1.012$\\
    7 &  $1.36 \cdot 10^{-2}$ & $0.999$ & $1.56 \cdot 10^{-2}$ & $0.999$ & $2.14 \cdot 10^{-4}$ & $2.052$ & $1.24 \cdot 10^{-2}$ & $1.036$\\
\end{tabular}
}
\end{table}

\begin{table}[H]
\caption{Discretization errors, $k=2$. \label{tab:mixed_k_2}}
\centering
\resizebox{\textwidth}{!}{
\begin{tabular}{c|cc|cc|cc|cc}
  \hspace*{\fill} \rule[-1ex]{0ex}{3.5ex}$L$ \hspace*{\fill}
    & \makebox[10ex][c]{$\|\primal-\primal_h\|_1$} 
    & \makebox[4ex][c]{order} 
    & \makebox[10ex][c]{$\|\aux-\aux_{h}\|_0$} 
    & \makebox[4ex][c]{order} 
    & \makebox[10ex][c]{$\|\vp-\vp_{h}\|_0$} 
    & \makebox[4ex][c]{order} 
    & \makebox[10ex][c]{$\|\vphi-\vphi_{h}\|_1$} 
    & \makebox[4ex][c]{order}\\
  \hline
    4 &  $4.33 \cdot 10^{-2}$ & $2.071$ & $1.75 \cdot 10^{-1}$ & $2.104$ & $1.22 \cdot 10^{-2}$ & $3.160$ & $2.04 \cdot 10^{-1}$ & $2.055$ \rule[-1ex]{0ex}{3.5ex}\\ 
    5 &  $1.06 \cdot 10^{-2}$ & $2.018$ & $4.29 \cdot 10^{-2}$ & $2.030$ & $1.49 \cdot 10^{-3}$ & $3.042$ & $5.05 \cdot 10^{-2}$ & $2.017$\\
    6 &  $2.66 \cdot 10^{-3}$ & $2.004$ & $1.06 \cdot 10^{-2}$ & $2.008$ & $1.85 \cdot 10^{-4}$ & $3.010$ & $1.25 \cdot 10^{-2}$ & $2.005$\\
    7 &  $6.65 \cdot 10^{-4}$ & $2.001$ & $2.66 \cdot 10^{-3}$ & $2.002$ & $2.30 \cdot 10^{-5}$ & $3.002$ & $3.13 \cdot 10^{-3}$ & $2.003$\\
\end{tabular}
}
\end{table}

\begin{table}[H]
\caption{Discretization errors, $k=3$. \label{tab:mixed_k_3}}
\centering
\resizebox{\textwidth}{!}{
\begin{tabular}{c|cc|cc|cc|cc}
  \hspace*{\fill} \rule[-1ex]{0ex}{3.5ex}$L$ \hspace*{\fill}
    & \makebox[10ex][c]{$\|\primal-\primal_h\|_1$} 
    & \makebox[4ex][c]{order} 
    & \makebox[10ex][c]{$\|\aux-\aux_{h}\|_0$} 
    & \makebox[4ex][c]{order} 
    & \makebox[10ex][c]{$\|\vp-\vp_{h}\|_0$} 
    & \makebox[4ex][c]{order} 
    & \makebox[10ex][c]{$\|\vphi-\vphi_{h}\|_1$} 
    & \makebox[4ex][c]{order}\\
  \hline
    4 &  $2.75 \cdot 10^{-3}$ & $3.084$ & $1.10 \cdot 10^{-2}$ & $3.105$ & $7.69 \cdot 10^{-4}$ & $4.219$ & $1.27 \cdot 10^{-2}$ & $3.019$ \rule[-1ex]{0ex}{3.5ex}\\ 
    5 &  $3.46 \cdot 10^{-4}$ & $2.994$ & $1.38 \cdot 10^{-3}$ & $2.989$ & $4.63 \cdot 10^{-5}$ & $4.054$ & $1.62 \cdot 10^{-3}$ & $2.966$\\
    6 &  $4.37 \cdot 10^{-5}$ & $2.985$ & $1.75 \cdot 10^{-4}$ & $2.978$ & $2.86 \cdot 10^{-6}$ & $4.012$ & $2.07 \cdot 10^{-4}$ & $2.972$\\
    7 &  $5.50 \cdot 10^{-6}$ & $2.989$ & $2.22 \cdot 10^{-5}$ & $2.985$ & $1.78 \cdot 10^{-7}$ & $4.005$ & $2.60 \cdot 10^{-5}$ & $2.995$\\
\end{tabular}
}
\end{table}

\bibliographystyle{abbrv}
\bibliography{KLplate_bibliography}

\end{document}